\documentclass[11pt]{amsart}
\usepackage[linkcolor=blue, citecolor = blue]{hyperref}
\hypersetup{colorlinks=true}
\usepackage{amsmath, amsfonts, amsthm, amssymb}
\usepackage{tikz}
\usepackage{ytableau}
\usepackage{caption}
\usepackage{subcaption}
\usepackage{graphicx} % Required for inserting images
\usepackage[makeroom]{cancel}
% \usepackage{fullpage}   % optional, nicer margins

% \addbibresource{paper_bibliography.bib} %Import the bibliography file

\newtheorem*{theorem*}{Theorem}
\newtheorem{theorem}{Theorem}[section]
\newtheorem{lemma}[theorem]{Lemma}
\newtheorem{proposition}[theorem]{Proposition}
\newtheorem*{corollary*}{Corollary}
\newtheorem{corollary}[theorem]{Corollary}

\theoremstyle{definition}
\newtheorem{remark}[theorem]{Remark}
\newtheorem{definition}[theorem]{Definition}
\newtheorem{example}[theorem]{Example}

\usetikzlibrary{decorations.markings}
\usetikzlibrary{decorations.pathmorphing}
\usetikzlibrary{matrix,decorations.pathreplacing}

\pgfkeys{tikz/mymatrixenv/.style={decoration=brace,every left delimiter/.style={xshift=4pt},every right delimiter/.style={xshift=-4pt}}}
\pgfkeys{tikz/mymatrix/.style={matrix of math nodes,left delimiter=[,right delimiter={]},inner sep=1pt,row sep=0em,column sep=0em,nodes={inner sep=6pt}}}

\setlength{\textwidth}{6.0in}
\setlength{\oddsidemargin}{0.25in}
\setlength{\evensidemargin}{0.25in}
\setlength{\topmargin}{-0.0in}
\setlength{\textheight}{8.5in}

\DeclareMathOperator*{\argmax}{arg\,max}

\title{Flagged Hamel--Goulden formulas}
\author{Alibek Adilzhan \and Damir Yeliussizov}
\address{Kazakh-British Technical University, Almaty, Kazakhstan}
\email{\href{mailto:alibek.adilzhanov@gmail.com}{alibek.adilzhanov@gmail.com}}
\email{\href{mailto:yeldamir@gmail.com}{yeldamir@gmail.com}}

\begin{document}
\tikzset{
    -->-/.style={decoration={
    markings,
    mark=at position .9 with {\arrow{>}}},postaction={decorate}},
    ->-/.style={decoration={
    markings,
    mark=at position .7 with {\arrow{>}}},postaction={decorate}}
}

\begin{abstract}
    We obtain Hamel--Goulden-type ribbon decomposition determinantal formulas for 
flagged supersymmetric Schur functions.
As an application, we derive corresponding new determinantal formulas dual refined canonical stable Grothendieck polynomials. These results generalize and produce a number of new determinantal formulas for these symmetric functions including Jacobi--Trudi and skew Giambelli-type determinants. 
\end{abstract}

\maketitle

\section{Introduction}
Determinantal formulas for Schur functions admit a remarkable unifying approach via ribbon decompositions due to Hamel and Goulden \cite{hg}, which generalizes Jacobi--Trudi and their dual N\"agelsbach--Kostka determinants, Giambelli and Lascoux--Pragacz formulas \cite{lasp}. 

In this paper, we develop new generalizations of Hamel--Goulden formulas for {\it flagged supersymmetric Schur functions} with two doubly infinite sets of variables.
As the main application, we obtain new determinantal formulas for {\it dual refined canonical stable Grothendieck polynomials}.

Flagged Schur functions are extensions of Schur functions arising in connection with Schubert polynomials, due to Lascoux and Sch\"utzenberger \cite{ls}, see also \cite{wachs, macdonald1, cll}. We consider an appropriate supersymmetric generalization of these functions. 

{Dual stable Grothendieck polynomials} 
can be viewed as a $K$-theoretic analogue of Schur functions  
due to Lam and Pylyavskyy \cite{lp}. 
One of their most general versions (with two sets of extra refined parameters)  
is known as 
the family of dual {\it refined canonical} stable Grothendieck polynomials which were introduced and studied by Hwang, Jang, Kim, Song, and Song in \cite{hwang1, hwang2}. These functions generalize refined versions by Galashin, Grinberg, and Liu \cite{ggl}, and canonical versions by the second author \cite{yel17}. Besides connections to $K$-theory, dual stable Grothendieck polynomials have interesting combinatorics related to plane partitions \cite{lp, ggl, yel21b}, and they also naturally arise in last passage percolation model in probability \cite{yel20, yel21a, ms}. 

Determinantal formulas for dual stable Grothendieck polynomials have received a considerable attention, especially Jacobi--Trudi-type formulas, see \cite{ln, yel17, iwao1, kim1, ay22, kim2, hwang1, hwang2, ms} for some related works. Generalizing such formulas for Hamel--Goulden identities becomes more technical, as even Giambelli-type hook formulas obtained  by Lascoux and Naruse \cite{ln} were not obvious generalizations and had different presentations than Schur functions. 
In contrast, other various generalizations of Schur functions have straightforward analogues of ribbon decomposition formulas, see e.g. \cite{macdonald1}. 

It turns out that to obtain Hamel--Goulden-type formulas for dual stable Grothendieck polynomials it is useful to lift this problem for more general families of flagged supersymmetric functions, which are additionally indexed by skew shapes with shifted contents called {\it r-shapes}. 
The crucial technical %difficult %important, technical
part in deriving such formulas for flagged functions is to define {\it induced ribbon flags}  used in the enumeration determinants, along with appropriate tableaux formulas. 

In particular, our formulas also produce new Jacobi--Trudi and skew Giambelli type determinantal formulas. Interestingly, we obtain new formulas for dual stable Grothendieck polynomials even for Jacobi--Trudi case which differ from \cite{hwang2}. We also obtain skew Giambelli-type formulas which (in straight shape case) generalize the formula from \cite{ln}.  

\subsection{Structure of the paper} This paper is organized as follows. 
\begin{itemize}
    \item In \textsection\,\ref{sec:prelim} we give preliminary definitions, recall elementary supersymmetric functions and dual stable Grothendieck polynomials. 
    \item In \textsection\,\ref{sec:flag} we define column and row flagged supersymmetric Schur polynomials, show their duality and  define $g$-specialization which reduces these functions to dual refined canonical stable Grothendieck polynomials. 
    \item In \textsection\,\ref{sec:tableaux} we state main tableaux formulas for flagged supersymmetric Schur polynomials via super tableaux. 
    \item In \textsection\,\ref{sec:ribbon} we define induced ribbon flags and state the main Hamel--Goulden-type formulas for flagged supersymmetric Schur polynomials along with specializations of these formulas for dual canonical refined stable Grothendieck polynomials. 
    \item In \textsection\,\ref{sec:proofs} we give proofs for main results stated in \textsection\,\ref{sec:tableaux},\ref{sec:ribbon}. 
    \item In \textsection\,\ref{sec:special} we show some special cases of our Hamel--Goulden-type formulas; namely, we show new Jacobi--Trudi-type and skew Giambelli-type hook determinantal formulas.
    \item In \textsection\,\ref{sec:g} we discuss special cases of our formulas for dual refined canonical stable Grothendieck polynomials; in particular, we present new Jacobi--Trudi-type and skew Giambelli-type hook determinantal formulas. 
    \item In \textsection\,\ref{sec:concl} we conclude with some remarks and open questions. 
\end{itemize}

\section{Preliminaries}\label{sec:prelim}
\subsection{Variables notation}
We denote $[m] := \{1,\ldots, m \}$. We shall use the following notation for variables. 
For a set of variables $\mathbf{x} = (x_i)$ we denote $\mathbf{x}_m = (x_i)_{i \in [m]} = (x_1,\ldots, x_m)$ and $\mathbf{x}_{a,b} = (x_i)_{i \in [a,b]} = (x_a, \ldots, x_b)$ if $a \le b$ (and $\mathbf{x}_{a,b} = \varnothing$ if $a > b$). 
We also denote $\overline{\mathbf{x}} = (-x_i)$ for the negation of variables. 

\subsection{Partitions and shapes} 
A {\it partition} is a sequence $\lambda = (\lambda_1, \ldots, \lambda_{\ell})$ of positive integers $\lambda_1 \ge \cdots \ge \lambda_{\ell}$, where $\ell = \ell(\lambda)$ is its {\it length}. The set $[\lambda] := \{(i,j) : 1 \le i \le \ell(\lambda), 1 \le j \le \lambda_1 \}$ is the {\it Young diagram} (or shape) of $\lambda$. The {\it conjugate} partition $\lambda'$ of $\lambda$ is the partition with the transposed Young diagram. For partitions $\lambda$ and $\mu$ with $[\mu] \subseteq [\lambda]$, the {\it skew partition} $\lambda/\mu$ has the diagram (or shape) $[\lambda/\mu] := [\lambda] \setminus [\mu]$. We refer to the elements of diagrams as boxes or cells, and use the English notation for drawing them (like matrices). 

For a cell $\gamma = (i,j)$ of a diagram, its {\it content} is given by $c(\gamma) := j - i$ and we also denote $\mathrm{row}(\gamma) = i, \mathrm{col}(\gamma) = j$ for its row and column indices. 

\subsubsection{r-shapes} Notice that knowing the content of a single cell 
in the shape $\lambda/\mu$, completely determines contents 
of all other cells. We use this property of shapes and define 
a \textit{refined shape} (or \textit{r-shape} for short) 
as a pair $(\lambda/\mu, r)$ for $r \in \mathbb{Z}$. 
The r-shape $(\lambda/\mu, r)$ is a shape whose content of the bottom-left cell 
is $r$. (Often we will just write $\lambda/\mu$ for an r-shape.) Informally, r-shape is a shape with `shifted' contents. 
We call the number $r$ as the \textit{root content} of 
the r-shape. 
Then the usual shape $\lambda/\mu$ is an r-shape 
$(\lambda/\mu, 1 - \lambda'_1)$. Note that for a cell $\gamma = (i,j)$ in the r-shape $(\lambda/\mu,\ r)$, its (shifted) content is $c(\gamma) = j - i + r - 1 + \lambda'_1$. We define the conjugate r-shape of $(\lambda, r)$ as the r-shape $(\lambda', -r')$, where $r'$ is the content of the top-right cell of $\lambda$. 

\subsection{Elementary supersymmetric functions}
%For sets of variables $\mathbf{x} = (x_1, x_2, \ldots)$ and $\mathbf{y} = (y_1, y_2, \ldots)$ we write $\mathbf{x} + \mathbf{y} = (\mathbf{x}, \mathbf{y}) = (x_1, x_2, \ldots, y_1, y_2, \ldots)$ to denote their union. 
For sets of variables $\mathbf{x} = (x_i), \mathbf{y} = (y_i)$, we define %the operation $\mathbf{x} - \mathbf{y}$ on 
the {\it elementary and complete homogeneous supersymmetric functions} $e_n(\mathbf{x} / \mathbf{y}), h_n(\mathbf{x} / \mathbf{y})$ as follows: 
$$
e_n(\mathbf{x} / \mathbf{y}) := \sum_{i = 0}^n (-1)^{n - i} e_i(\mathbf{x}) h_{n - i}(\mathbf{y}), \quad
h_n(\mathbf{x} / \mathbf{y}) := \sum_{i = 0}^n (-1)^{n - i} h_i(\mathbf{x}) e_{n - i}(\mathbf{y})
$$
%via the usual elementary $e_i$ and complete homogeneous symmetric functions $h_i$.
via the usual elementary and complete homogeneous symmetric functions 
$$
e_k(\mathbf{x}) = \sum_{i_1 < \ldots < i_k} x_{i_1} \cdots x_{i_k},  \qquad 
h_{k}(\mathbf{x}) = \sum_{i_1 \le \ldots \le i_k} x_{i_1} \cdots x_{i_k}.
$$ 
Note that we have the following generating series 
$$
\sum_{n = 0}^{\infty} e_n(\mathbf{x} / \mathbf{y}) t^n = \prod_{i} \frac{1 + x_i t}{1 + y_i t}, \qquad 
\sum_{n = 0}^{\infty} h_n(\mathbf{x} / \mathbf{y}) t^n = \prod_{i} \frac{1 - y_i t}{1 - x_i t}
$$
and the following duality 
$$
e_n (\mathbf{x} / \mathbf{y}) = (-1)^n h_n(\mathbf{y} / \mathbf{x}) = h_n(\overline{\mathbf{y}} / \overline{\mathbf{x}}).
$$

\subsection{Dual stable Grothendieck polynomials}\label{sec:dualg}
For sets of variables $\mathbf{x} = (x_1, x_2, \ldots)$, $\boldsymbol{\alpha} = (\alpha_1, \alpha_2, \ldots)$ and $\boldsymbol{\beta} = (\beta_1, \beta_2, \ldots)$ the {\it dual refined canonical stable Grothendieck polynomial} $g_{\lambda/\mu}(\mathbf{x}; \boldsymbol{\alpha}; \boldsymbol{\beta})$ is a symmetric function (in the $\mathbf{x}$ variables) which can be defined as follows: 
$$
g_{\lambda/\mu}(\mathbf{x}; \boldsymbol{\alpha}, \boldsymbol{\beta}) := 
\det \left[e_{\lambda'_i - i - \mu'_j + j}(\mathbf{x}, \overline{\boldsymbol{\alpha}}_{j - 1}, \boldsymbol{\beta}_{\lambda'_i - 1} \,/\, \overline{\boldsymbol{\alpha}}_{i - 1}, \boldsymbol{\beta}_{\mu'_j}) \right]_{1 \le i, j \le n},
$$
where $n \ge \lambda_1$. 
These functions were introduced and studied in \cite{hwang1, hwang2} (the version written here differs by $\boldsymbol{\alpha} \to \overline{\boldsymbol{\alpha}}$). Note that for $\boldsymbol{\alpha} = 0, \boldsymbol{\beta} = 0 $ these functions specialize to the Schur functions $s_{\lambda/\mu}(\mathbf{x})$; in general, $g_{\lambda/\mu}(\mathbf{x}; \boldsymbol{\alpha}; \boldsymbol{\beta}) = s_{\lambda/\mu}(\mathbf{x}) + \text{lower degree terms}$. For $\boldsymbol{\alpha} = 0, \boldsymbol{\beta} = (1, 1, \ldots)$ they specialize to the dual stable Grothendieck polynomials $g_{\lambda/\mu}(\mathbf{x})$ introduced in \cite{lp};  for $\boldsymbol{\alpha} = 0$ they specialized to the refined version introduced in \cite{ggl}; and for $\boldsymbol{\alpha} = (\alpha, \alpha, \ldots), \boldsymbol{\beta} = (\beta, \beta, \ldots)$ they specialize to canonical version %dual stable Grothendieck polynomials 
introduced in \cite{yel17}. 
Notably,  the following duality 
$$
\omega(g_{\lambda/\mu}(\mathbf{x}; \boldsymbol{\alpha}, \boldsymbol{\beta})) = g_{\lambda'/\mu'}(\mathbf{x}; \boldsymbol{\beta}, \boldsymbol{\alpha})
$$
holds for the action of the standard involutive ring automorphism $\omega$ of the ring of symmetric functions. 
The functions $g_{\lambda/\mu}(\mathbf{x}; \boldsymbol{\alpha}, \boldsymbol{\beta})$ also have combinatorial formula using marked reverse plane partitions \cite{hwang1, hwang2}; here we will use for them another new formula using super tableaux. 

\section{Flagged supersymmetric Schur polynomials}\label{sec:flag}
We use the known notions of column and row flags of Schur functions, see e.g. \cite{wachs, macdonald1}.

\begin{definition}[Flags] Given r-shape $\lambda/\mu$ and $n \ge \lambda_1$. 

(Column flags) %For a shape $\lambda/\mu$ and $n \ge \lambda_1$, 
The vectors $\mathbf{a} = (a_i), \mathbf{b} = (b_i) \in \mathbb{Z}^n$ satisfying the following conditions 
$$
a_{i}-a_{i+1} \le \mu'_i-\mu'_{i+1}+1, \quad b_i-b_{i+1} \le \lambda'_i-\lambda'_{i+1}+1, \quad i \ge 1, \text{ whenever $\mu'_i < \lambda'_{i+1}$},
$$
are called {\it column flags}. (We shall refer to them as flags.)

(Row flags) The vectors $\mathbf{a} = (a_i), \mathbf{b} = (b_i) \in \mathbb{Z}^n$  %$\mathbf{a} = (a_i), \mathbf{b} = (b_i) \in \mathbb{Z}^n$ 
satisfying the following conditions 
$$
a_{i}-a_{i+1}\le 0, \quad b_i-b_{i+1}\le 0, \quad i \ge 1, \text{ whenever $\mu_i < \lambda_{i+1}$},
$$
are called {\it row flags}. 

(Conjugate flags) For flags $\mathbf{a}, \mathbf{b}$ define the conjugate flags $\mathbf{a}' = (a'_i), \mathbf{b}' = (b'_i)$ given by 
$$
a'_i = a_i + c(\gamma_i), \quad b'_i = b_i + c(\delta_i), \quad i \ge 1, 
$$
where $\gamma_i, \delta_i$ denote the top and bottom cells of $i$-th column of the diagram. (We  define conjugation this way to include refined shapes with shifted contents.) 
Note that if $\mathbf{a}, \mathbf{b}$ are column flags for the shape $\lambda/\mu$, then $\mathbf{a}', \mathbf{b}'$ are row flags for the shape $\lambda'/\mu'$ (and vice versa). 
\end{definition}

We define (column) flagged supersymmetric Schur functions as follows. 

\begin{definition}[Flagged supersymmetric Schur functions]
For a skew r-shape $\lambda/\mu$ and (doubly infinite) sets of variables $\mathbf{y} = (y_i)_{i \in \mathbb{Z}}$ and $\mathbf{z} = (z_i)_{i \in \mathbb{Z}}$ we define the skew {\it flagged supersymmetric Schur functions} as follows: 
$$
\mathsf{S}^{\mathbf{a}, \mathbf{b}}_{\lambda/\mu}\left(\mathbf{y} / \mathbf{z} \right) := 
\det\left[e_{\lambda'_i - i - \mu'_j + j}(\mathbf{y}_{a_j, b_i} \,/\, \mathbf{z}_{a'_j, b'_i}) \right]_{1 \le i,j \le n},
$$
where $n \ge \lambda_1$ and $\mathbf{a}, \mathbf{b}$ are column flags. 
\end{definition}

%Let us mention a few immediate special cases.
For $\mathbf{z} = 0$ this function specializes to flagged Schur function. Without flag restrictions (i.e. letting $a_i \to -\infty, b_i \to \infty$), this function specializes to supersymmetric Schur function. 

\vspace{0.5em}

We similarly define the \textit{row flagged supersymmetric Schur functions}  
$\overline{\mathsf{S}}^{\mathbf{a',b'}}_{\lambda/\mu}(\mathbf{y} / \mathbf{z})$ with 
row flags $\mathbf{a', b'}$ as follows:
\begin{align*}
    \overline{\mathsf{S}}^{\mathbf{a}', \mathbf{b}'}_{\lambda/\mu}(\mathbf{y} / \mathbf{z}) = \det \left[ h_{\lambda_i-\mu_j-i+j}(\mathbf{y}_{a'_j,b'_i} / \mathbf{z}_{a_j,b_i}) \right]_{1 \le i,j \le n}, %^{\lambda'_1}
\end{align*}
for $n \ge \ell(\lambda)$. 
Then the two functions are related via the following duality (which shows that it is enough to consider just one of them).  

\begin{proposition}[Duality for column and row flagged functions]
The following duality holds: 
$$
\mathsf{S}^{\mathbf{a, b}}_{\lambda/\mu}(\mathbf{y} / \mathbf{z}) = \overline{\mathsf{S}}^{\mathbf{a}', \mathbf{b}'}_{\lambda'/\mu'}(\overline{\mathbf{z}} / \overline{\mathbf{y}}). 
$$
\end{proposition}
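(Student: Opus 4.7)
The plan is to verify the stated identity entrywise. The only substantive input is the elementary $e$--$h$ duality
$$
e_n(\mathbf{x}/\mathbf{y}) = h_n(\overline{\mathbf{y}}/\overline{\mathbf{x}})
$$
recorded in the preliminaries; once the two matrices are matched up, taking determinants finishes the proof.

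First I would unfold the right-hand side using the definition of the row flagged supersymmetric Schur function, applied to the shape $\lambda'/\mu'$ with row flags $\mathbf{a}',\mathbf{b}'$ and variable sets $\overline{\mathbf{z}}/\overline{\mathbf{y}}$. This yields
$$
\overline{\mathsf{S}}^{\mathbf{a}',\mathbf{b}'}_{\lambda'/\mu'}(\overline{\mathbf{z}}/\overline{\mathbf{y}}) = \det\bigl[h_{\lambda'_i - \mu'_j - i + j}(\overline{\mathbf{z}}_{a'_j,b'_i}/\overline{\mathbf{y}}_{a_j,b_i})\bigr]_{1 \le i,j \le n}.
$$
Here some care is required with the unprimed subscripts: the conjugation rule $a'_i = a_i + c(\gamma_i)$, $b'_i = b_i + c(\delta_i)$ is invertible and therefore realizes a bijection between column flags for $\lambda/\mu$ and row flags for $\lambda'/\mu'$. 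Consequently the unprimed flags attached to the second variable set on the right-hand side are unambiguously the same $\mathbf{a},\mathbf{b}$ as on the left.

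Next I would compare with the left-hand side
$$
\mathsf{S}^{\mathbf{a},\mathbf{b}}_{\lambda/\mu}(\mathbf{y}/\mathbf{z}) = \det\bigl[e_{\lambda'_i - i - \mu'_j + j}(\mathbf{y}_{a_j,b_i}/\mathbf{z}_{a'_j,b'_i})\bigr]_{1 \le i,j \le n}.
$$
The two exponents $\lambda'_i - i - \mu'_j + j$ and $\lambda'_i - \mu'_j - i + j$ are literally equal, and the preliminary duality with $\mathbf{x} = \mathbf{y}_{a_j,b_i}$ and $\mathbf{y} = \mathbf{z}_{a'_j,b'_i}$ converts the $(i,j)$-entry of the left-hand determinant directly into the $(i,j)$-entry of the right-hand determinant. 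The two matrices therefore coincide entrywise, which proves the proposition. The only genuine obstacle is the flag bookkeeping just mentioned; no deeper combinatorial or algebraic argument is needed, since the duality already lives at the level of the $e_n(\cdot/\cdot)$ and $h_n(\cdot/\cdot)$ generating series.
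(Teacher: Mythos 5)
Your proposal is correct and takes essentially the same approach as the paper: both apply the elementary $e$--$h$ duality $e_n(\mathbf{x}/\mathbf{y}) = h_n(\overline{\mathbf{y}}/\overline{\mathbf{x}})$ entrywise inside the defining determinants and identify the result. Your extra remark on the involutivity of flag conjugation is sound and just makes explicit a point the paper leaves implicit.
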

\begin{proof}
We have 
\begin{align*}
    \mathsf{S}^{\mathbf{a}, \mathbf{b}}_{\lambda/\mu}\left(\mathbf{y} / \mathbf{z} \right) 
    &= \det\left[e_{\lambda'_i - i - \mu'_j + j}(\mathbf{y}_{a_j, b_i} \,/\, \mathbf{z}_{a'_j, b'_i}) \right]_{1 \le i,j \le n} \\
    &= \det\left[h_{\lambda'_i - i - \mu'_j + j}(\mathbf{\overline{z}}_{a'_j, b'_i} \,/\, \mathbf{\overline{y}}_{a_j, b_i}) \right]_{1 \le i,j \le n} \\ 
    &= \overline{\mathsf{S}}^{\mathbf{a}', \mathbf{b}'}_{\lambda'/\mu'}(\overline{\mathbf{z}} / \overline{\mathbf{y}})
\end{align*}
as desired. 
\end{proof}

\subsection{$g$-specialization}
We are mainly interested in the following specialization of flagged supersymmetric Schur functions which reduces them to the dual refined canonical stable Grothendieck polynomials. 

\begin{definition}[$g$-specialization]
Let $m$ be given positive integer. Define the {\it $g$-specialization} as the following substitution of variables $\mathbf{y} = (y_i), \mathbf{z} = (z_i)$
$$
g : y_i \to 
\begin{cases}
 x_i, & \text{ if } i \in [m], \\
 \beta_{i - m}, & \text{ if } i > m, \\
 -\alpha_{-i + 1}, & \text{ if } i \le 0,
\end{cases} 
\qquad 
z_i \to 
\begin{cases}
 0, & \text{ if } i \in [m], \\
-\alpha_{i - m}, & \text{ if } i > m, \\
 \beta_{-i + 1}, & \text{ if } i \le 0.
\end{cases} 
$$
We then define the {\it flagged dual Grothendieck enumerator} 
$$
\mathsf{g}^{\mathbf{a}, \mathbf{b}}_{\lambda/\mu}(\mathbf{x}_m; \boldsymbol{\alpha}, \boldsymbol{\beta}) := \mathsf{S}^{\mathbf{a}, \mathbf{b}}_{\lambda/\mu}\left(\mathbf{y} / \mathbf{z} \right) |_g
$$
as the function under $g$-specialization of flagged supersymmetric Schur function. 
\end{definition}

\begin{proposition}
\label{prop:g-enum-ribbon}
Let $m\in \mathbb{Z}_{\ge 0}$ and $\mathbf{a} = (a_i), \mathbf{b} = (b_i)$ be flags. Let us denote $\tau(k) = -k + 1$ and $\eta(k) = k - m$. 
Then we have 
$$
\mathsf{g}^{\mathbf{a}, \mathbf{b}}_{\lambda/\mu}(\mathbf{x}_m; \boldsymbol{\alpha}, \boldsymbol{\beta}) 
= 
\det \left[e_{\lambda'_i - i - \mu'_j + j}(\mathbf{x}_{u_j,v_i}, \overline{\boldsymbol{\alpha}}_{\tau(b_i), \tau(a_j)}, \boldsymbol{\beta}_{\eta(a_j), \eta(b_i)} \,/\, \overline{\boldsymbol{\alpha}}_{\eta(a'_j), \eta(b'_i)}, \boldsymbol{\beta}_{\tau(b'_i), \tau(a'_j)}) \right]_{1 \le i,j \le n},
$$
where $n \ge \lambda_1$ and $u_j = \max(1, a_j)$, $v_i = \min(m, b_i)$ (we also set $\alpha_i = \beta_i = 0$ if $i \le 0$). 
In particular, 
when {$\lambda/\mu$ is usual shape} and $\mathbf{a} = (a_i), \mathbf{b} = (b_i) \in \mathbb{Z}^n$ are the following column flags:  
$$
a_i = -i + 2, \quad b_i = \lambda'_i + m -1, \quad i \in [n],
$$
then flagged supersymmetric Schur functions and flagged dual Grothendieck enumerator specialize to dual refined canonical stable Grothendieck polynomials  
$$
\mathsf{S}^{\mathbf{a}, \mathbf{b}}_{\lambda/\mu}\left(\mathbf{y} / \mathbf{z} \right)|_g = \mathsf{g}^{\mathbf{a}, \mathbf{b}}_{\lambda/\mu}(\mathbf{x}_m; \boldsymbol{\alpha}, \boldsymbol{\beta}) = g_{\lambda/\mu}(\mathbf{x}_m; \boldsymbol{\alpha}, \boldsymbol{\beta})
$$
under $g$-specialization. 
\end{proposition}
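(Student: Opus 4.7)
The plan is to apply the $g$-specialization entrywise to the determinantal definition of $\mathsf{S}^{\mathbf{a},\mathbf{b}}_{\lambda/\mu}(\mathbf{y}/\mathbf{z})$ and track how each variable set is partitioned into three pieces indexed by $i\le 0$, $i\in[m]$, and $i>m$. The only tool needed is the multiplicativity of the generating series $\sum_n e_n(\mathbf{u}/\mathbf{v})\,t^n = \prod_i (1+u_i t)/(1+v_i t)$, which allows us to freely split or merge variable sets in the numerator and denominator of an elementary supersymmetric function.

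First I would examine the entry $e_{\lambda'_i-i-\mu'_j+j}(\mathbf{y}_{a_j,b_i}/\mathbf{z}_{a'_j,b'_i})$. The interval $[a_j,b_i]$ splits as $[a_j,0]\sqcup[1,m]\sqcup[m+1,b_i]$ (with possibly empty pieces). On these pieces the $g$-specialization sends $y_k$ to $-\alpha_{1-k}$, then $x_k$, then $\beta_{k-m}$, and via the reparametrizations $p=\tau(k)=1-k$ (for $k\le 0$) and $p=\eta(k)=k-m$ (for $k\ge m+1$) these become the sets $\overline{\boldsymbol{\alpha}}_{\tau(b_i),\tau(a_j)}$, $\mathbf{x}_{u_j,v_i}$, and $\boldsymbol{\beta}_{\eta(a_j),\eta(b_i)}$ respectively. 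The parallel analysis for $\mathbf{z}_{a'_j,b'_i}$ yields $\boldsymbol{\beta}_{\tau(b'_i),\tau(a'_j)}$, the empty set (since $z_k=0$ on $[m]$), and $\overline{\boldsymbol{\alpha}}_{\eta(a'_j),\eta(b'_i)}$. Edge cases where some of these ranges become vacuous are absorbed by the convention $\alpha_i=\beta_i=0$ for $i\le 0$, which leaves the formula unchanged. Concatenating the three pieces via multiplicativity gives exactly the entry displayed in the proposition, proving the first assertion.

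For the usual-shape case with $a_i=-i+2$ and $b_i=\lambda'_i+m-1$, I would first check that these are column flags: the required inequalities reduce to $\mu'_i\ge\mu'_{i+1}$ and $\lambda'_i-\lambda'_{i+1}\le\lambda'_i-\lambda'_{i+1}+1$, both obvious. Next, using that the top and bottom cells of column $i$ have contents $i-\mu'_i-1$ and $i-\lambda'_i$, one computes the conjugate flags $a'_j=1-\mu'_j$ and $b'_i=m+i-1$. Substituting these into the formulas for the variable ranges and truncating via $\alpha_k=\beta_k=0$ for $k\le 0$, one finds $\overline{\boldsymbol{\alpha}}_{\tau(b_i),\tau(a_j)}=\overline{\boldsymbol{\alpha}}_{j-1}$, $\boldsymbol{\beta}_{\eta(a_j),\eta(b_i)}=\boldsymbol{\beta}_{\lambda'_i-1}$, $\overline{\boldsymbol{\alpha}}_{\eta(a'_j),\eta(b'_i)}=\overline{\boldsymbol{\alpha}}_{i-1}$, $\boldsymbol{\beta}_{\tau(b'_i),\tau(a'_j)}=\boldsymbol{\beta}_{\mu'_j}$, and $\mathbf{x}_{u_j,v_i}=\mathbf{x}_m$, at which point the determinant becomes the very definition of $g_{\lambda/\mu}(\mathbf{x}_m;\boldsymbol{\alpha},\boldsymbol{\beta})$. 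The whole argument is just a chain of substitutions; the only place where care is needed — and the main obstacle — is the bookkeeping of the shifted index ranges under $\tau$ and $\eta$ together with the conjugate-flag content shifts, since it is easy to be off by one when the pieces of the partition of $[a_j,b_i]$ are empty.
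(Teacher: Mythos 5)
Your proposal is correct and follows essentially the same route as the paper: apply the $g$-specialization entrywise to the defining determinant, split the index interval $[a_j,b_i]$ into the three ranges $[a_j,0]$, $[1,m]$, $[m+1,b_i]$, reparametrize with $\tau$ and $\eta$, and invoke the convention $\alpha_i=\beta_i=0$ for $i\le 0$ to absorb the empty edge cases. The paper is terser (it states ``several routine cases all of which lead to\ldots'' without spelling them out), whereas you make the mechanism explicit via the multiplicativity of the $e_n(\mathbf{u}/\mathbf{v})$ generating series and additionally verify that $a_i=-i+2$, $b_i=\lambda'_i+m-1$ are indeed column flags — a small but welcome check the paper omits.
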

\begin{proof}
    Let $\mathbf{a, b}$ be any column flags. 
    Let us see how $g$-specialization applies on $\mathbf{y}_{a_j, b_i}$. If $a_j > b_i$ for some $i,j$, we have $\mathbf{y}_{a_j, b_i} = \varnothing$, and $\mathbf{x}_{u_j,v_i} = \overline{\boldsymbol{\alpha}}_{\tau(b_i), \tau(a_j)} = \boldsymbol{\beta}_{\eta(a_j), \eta(b_i)} = \varnothing$, thus there is nothing to check. If $a_j \le b_i$ there are several routine cases all of which lead to 
$$\mathbf{y}_{a_j, b_i} \rightarrow \mathbf{x}_{u_j,v_i} \cup  \overline{\boldsymbol{\alpha}}_{\tau(b_i), \tau(a_j)} \cup \boldsymbol{\beta}_{\eta(a_j), \eta(b_i)}.$$

    Let $\mathbf{a} = (-i + 2)_i,\ \mathbf{b} = (\lambda'_i + m - 1)_i$. The conjugate flags are $\mathbf{a}' = (-\mu'_i + 1)_i,\ \mathbf{b}' = (i + m - 1)_i$. Let us calculate the boundaries of variable sets: 
    \begin{alignat*}{4}
        &\tau(a_j) = j - 1& \qquad &\tau(a'_j) = \mu'_i& \qquad &\eta(b_i) = \lambda'_i - 1& \qquad &\eta(b'_i) = i - 1, \\
        &\tau(b_i) \le 0& \qquad &\tau(b'_i) \le 0& \qquad &\eta(a_j) \le 1& \qquad &\eta(a'_j) \le 1.
    \end{alignat*}
    Then we have 
    \begin{align*}
    \mathsf{g}^{\mathbf{a}, \mathbf{b}}_{\lambda/\mu}(\mathbf{x}_m; \boldsymbol{\alpha}, \boldsymbol{\beta}) 
    = \det \left[e_{\lambda'_i - i - \mu'_j + j}(\mathbf{x}, \overline{\boldsymbol{\alpha}}_{j - 1}, \boldsymbol{\beta}_{\lambda'_i - 1} \,/\, \overline{\boldsymbol{\alpha}}_{i - 1}, \boldsymbol{\beta}_{\mu'_j}) \right]_{1 \le i, j \le n} 
    = g_{\lambda/\mu}(\mathbf{x}; \boldsymbol{\alpha}, \boldsymbol{\beta})
    \end{align*}
    as desired. 
\end{proof}

\section{Tableaux formulas}\label{sec:tableaux}
In this section we describe combinatorial tableaux formulas for the functions 
$\mathsf{S}^{\mathbf{a}, \mathbf{b}}_{\lambda/\mu}(\mathbf{y} / \mathbf{z})$. 

\begin{definition}
    A \textit{flagged $\mathbb{Z}$-SSYT} of r-shape $\lambda/\mu$ 
    with flags 
    $\mathbf{a} = (a_i), \mathbf{b} = (b_i)$ is a semistandard tableau (i.e. weakly increasing along rows from left to right and strictly increasing along columns from top to bottom) $T = (T_{i,j})$ filled with integers 
    so that $a_j \le T_{i,j} \le b_j$. Let $ZT_{\lambda/\mu}(\mathbf{a}, \mathbf{b})$ be the set of all $\mathbb{Z}$-SSYT of the r-shape $\lambda/\mu$ with flags $\mathbf{a}, \mathbf{b}$.
\end{definition}

\begin{example}
    For $\lambda/\mu=(5,5,5,3,3)/(2,1)$  
    and flags $\mathbf{a} = (-2,-2,-1,0,0)$, $\mathbf{b} = (7,6,7,7,6)$, an 
    example of $\mathbb{Z}$-SSYT is shown in Fig.~\ref{fig:ztab1}.
    \label{ex:zssyt-1}
\end{example}

\ytableausetup{mathmode, boxsize=1.7em}
\begin{figure}
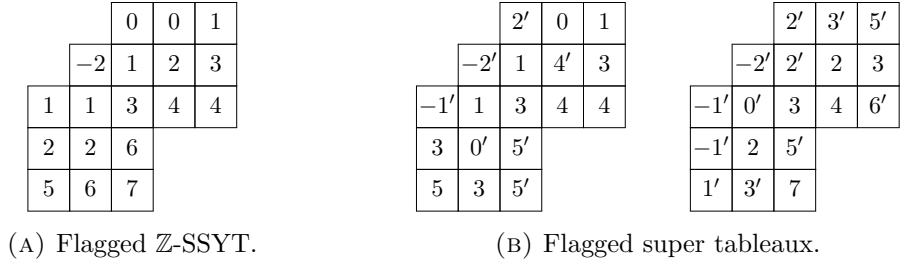

    \centering
    \begin{subfigure}[b]{0.3\textwidth}
        \centering
        \resizebox{3cm}{!}{
        \begin{ytableau}
        \none & \none & 0 & 0 & 1\\
        \none & -2 & 1 & 2 & 3 \\
        1 & 1 & 3 & 4 & 4 \\
        2 & 2 & 6 \\
        5 & 6 & 7
        \end{ytableau} 
        }
        \subcaption{Flagged $\mathbb{Z}$-SSYT.}
        \label{fig:ztab1}
    \end{subfigure}
    \begin{subfigure}[b]{0.6\textwidth}
        \centering
        \resizebox{3cm}{!}{
        \begin{ytableau}
            \none & \none & 2' & 0 & 1\\
            \none & -2' & 1 & 4' & 3 \\
            -1' & 1 & 3 & 4 & 4 \\
            3 & 0' & 5' \\
            5 & 3 & 5'
        \end{ytableau}
        }
        \hspace{1em} 
        \resizebox{3cm}{!}{
        \begin{ytableau}
            \none & \none & 2' & 3' & 5'\\
            \none & -2' & 2' & 2 & 3 \\
            -1' & 0' & 3 & 4 & 6' \\
            -1' & 2 & 5' \\
            1' & 3' & 7
        \end{ytableau}
        }
        \caption{Flagged super tableaux.}
        \label{fig:btab1}
    \end{subfigure}
    \caption{Examples of tableaux.}
\end{figure}

We now consider tableaux with extra primed entries $\mathbb{Z}' = \{\ldots  ,-2', -1',0', 1', 2', \ldots \}$. 
   %We also write $(-n)' = -n'$. 

\begin{definition}
    \label{def:super-tableaux}
    For a given flagged $\mathbb{Z}$-SSYT $\tilde{T} = (\tilde{T}_{i,j})$, %of shape $\lambda/\mu$, 
    we produce
    \textit{flagged super tableau} $T = (T_{i,j})$ of the same r-shape %of shape $\lambda/\mu$ 
    %with flags $\mathbf{a}, \mathbf{b}$ 
    as follows:
    \begin{align*}
        T_{ij} = \tilde{T}_{ij} \in \mathbb{Z} \quad \text{or} \quad T_{ij} = (\tilde{T}_{ij} + c({i, j}))' \in \mathbb{Z}',
    \end{align*}
    where $c(i,j)$ is a shifted content (for the given r-shape). 
    %We say that $T$ is a \textit{representation} of  $\tilde{T}$.  
    Note that every choice of ${T}$ induces unique super tableau.   
Let $ST_{\lambda/\mu}(\mathbf{a}, \mathbf{b})$ be the set of all flagged super tableaux 
produced from the set $ZT_{\lambda/\mu}(\mathbf{a}, \mathbf{b})$ of $\mathbb{Z}$-SSYT of r-shape $\lambda/\mu$ with flags $\mathbf{a}, \mathbf{b}$.
Notice that in $T$, the vectors $\mathbf{a}', \mathbf{b}'$ 
are row flags for primed elements. 
For $r \in \mathbb{Z}$, define the weight $wt(T_{ij})$ as
\begin{equation*}
    wt(T_{ij}) := 
    \begin{cases}
        \label{eq:wt-func-bt-0}
        y_r  \quad \text{if} \quad T_{ij}=r, \\ 
        -z_r  \quad \text{if} \quad T_{ij}=r', 
    \end{cases}
\end{equation*}
and define the weight $wt(T)$ of the super tableau $T$ as
\begin{equation*}
    \label{eq:wt-func-bt}
    wt(T) = \prod_{ij}wt(T_{ij}).
\end{equation*}
\end{definition}

\begin{example}
    Some super tableaux produced from 
    the $\mathbb{Z}$-SSYT in Example~\ref{ex:zssyt-1} are
    shown in Fig.~\ref{fig:btab1}. 
    %Let $T_1, T_2$ be the left and right tableaux respectively, then 
   % \begin{align*}
    %    wt(T_1)=-y_0y_1^3y_3^4y_4^2y_5z_{-2}z_{-1}z_0z_2z_4z_5^2, 
     %   \qquad wt(T_2)=y_2^2y_3^2y_4y_7z_{-2}z_{-1}^2z_0z_1z_2^2z_3^2z_5^2z_6.
    %\end{align*}
There we have  
    \begin{align*}
    wt(T_1) &= y_0y_1^3y_3^4y_4^2y_5 (-z_{-2})(-z_{-1})(-z_0)(-z_2)(-z_4)(-z_5)^2, \\  
    wt(T_2) &= y_2^2y_3^2y_4y_7 (-z_{-2}) (-z_{-1})^2 (-z_0) (-z_1) (-z_2)^2 (-z_3)^2 (-z_5)^2 (-z_6),
    \end{align*}
    where $T_1, T_2$ are the left and right 
    tableaux, respectively.
\end{example}

%Now we give a combinatorial interpretation of $\mathsf{S}_{\lambda/\mu}^{\mathbf{a}, \mathbf{b}}(\mathbf{y} / \mathbf{z})$

We obtain the following combinatorial formula for $\mathsf{S}_{\lambda/\mu}^{\mathbf{a}, \mathbf{b}}(\mathbf{y} / \mathbf{z})$. 

\begin{theorem}[Tableaux formula for flagged supersymmetric Schur functions]
    \label{thm:signed-expansion}
    The following tableau formula holds:  
    \begin{align}
        \label{eq:signed-expansion}
        \mathbf{S}^{\mathbf{a}, \mathbf{b}}_{\lambda/\mu}(\mathbf{y} / \mathbf{z}) = 
        \sum_{T\in ST_{\lambda/\mu}(\mathbf{a}, \mathbf{b})} wt(T).
    \end{align}
\end{theorem}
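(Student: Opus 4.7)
The plan is to prove the theorem via a two-step strategy: first reduce to a column-level combinatorial identity for $e_k(\mathbf{y}/\mathbf{z})$, then apply a sign-reversing involution (in the spirit of Lindström--Gessel--Viennot) to the signed expansion of the determinant.

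First I would establish the column-level identity: for each column $j$ of length $k = \lambda'_j - \mu'_j$, the entry $e_k(\mathbf{y}_{a_j, b_j}/\mathbf{z}_{a'_j, b'_j})$ is exactly the generating function of a single super column in column $j$. Writing $e_k(\mathbf{y}/\mathbf{z}) = \sum_s (-1)^{k-s} e_s(\mathbf{y}) h_{k-s}(\mathbf{z})$ produces a sum over pairs (strict unprimed sequence in $[a_j, b_j]$, weak primed sequence in $[a'_j, b'_j]$). The conjugate-flag conventions $a'_j = a_j + c(\gamma_j)$ and $b'_j = b_j + c(\delta_j)$ are designed exactly so that the map $(\tilde{T}_{*,j}, S) \mapsto (\tilde{T}_{i,j} \text{ for } i \notin S,\ \tilde{T}_{i,j} + c(i,j) \text{ for } i \in S)$ is a weight-preserving bijection. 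I would check that $\tilde{T}_{i,j} + c(i,j) \in [a'_j, b'_j]$ on primed rows (which follows from the bounds $a_j + (i - \mu'_j - 1) \le \tilde{T}_{i,j} \le b_j - (\lambda'_j - i)$ imposed by column-strictness, after adding $c(i,j) = j - i$) and that primed values are weakly monotone because unit-strictness of $\tilde{T}$ exactly cancels unit-decrease of $c(i,j)$ down the column.

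Next I would expand the $n \times n$ determinant into a signed sum over $\sigma \in S_n$. The $\sigma$-term decomposes as a product over $j$ of single-column generating functions with displaced lengths $\lambda'_{\sigma(j)} - \sigma(j) - \mu'_j + j$ and flag intervals $[a_j, b_{\sigma(j)}]$, $[a'_j, b'_{\sigma(j)}]$. Each such term can be encoded as an $n$-tuple of lattice paths with two step types (unprimed and primed), with source of path $j$ determined by $(\mu'_j, a_j)$ and sink determined by $(\lambda'_{\sigma(j)}, b_{\sigma(j)})$. For $\sigma = \mathrm{id}$, non-intersecting path tuples biject with flagged super tableaux: non-intersection translates into the row-weakly-increasing condition of the underlying $\mathbb{Z}$-SSYT $\tilde{T}$, while the independent primed/unprimed choices per cell produce the full sum over $ST_{\lambda/\mu}(\mathbf{a}, \mathbf{b})$.

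Finally I would apply the tail-swap involution on intersecting tuples: locate the first pair of crossing paths and exchange their tails. This transposes $\sigma$, flipping the sign, preserves weights, and is an involution, so all intersecting tuples cancel. The main technical obstacle is verifying that the tail-swap preserves every flag constraint. The column-flag inequalities $a_i - a_{i+1} \le \mu'_i - \mu'_{i+1} + 1$ and $b_i - b_{i+1} \le \lambda'_i - \lambda'_{i+1} + 1$ are precisely what is needed for the $(a, b)$-ranges of the unprimed pieces to remain valid after swapping; one must simultaneously verify compatibility with the conjugate flags $\mathbf{a}', \mathbf{b}'$ governing primed entries, where the content offsets at $\gamma_j$ and $\delta_j$ differ from column to column. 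Handling this compatibility, and extending the argument from usual shapes to r-shapes with shifted contents, constitutes the core technical work of the proof.
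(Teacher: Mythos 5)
Your proposal takes essentially the same route as the paper: decompose $\lambda/\mu$ into vertical ribbons, realize each determinant entry $e_{\lambda'_i-\mu'_j-i+j}(\mathbf{y}_{a_j,b_i}/\mathbf{z}_{a'_j,b'_i})$ as a single-column path enumerator on a fixed lattice carrying two weighted edge types per horizontal step, and apply the LGV lemma; this is exactly what the paper does via its $\mathbb{Z}$-lattice / super-lattice construction and Lemma~\ref{lem:vert-rib-enum}.

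One correction of emphasis on what you call the main technical obstacle: the tail-swap on a fixed (multi-)DAG automatically produces paths composed of lattice edges, so once the flag ranges are built into the lattice's vertex set there is no separate flag-compatibility verification to perform at the swap step. The flag inequalities are used earlier, to show the path $\leftrightarrow$ column-tableau correspondence is well-defined at every intermediate content traversed by the path (this is checked explicitly in the proof of Lemma~\ref{thm:bijection-path-ribbon}), and the genuinely non-automatic LGV hypothesis the paper isolates is that any path tuple realizing a nonidentity $\sigma$ must intersect (Lemma~\ref{lemma:bad-paths-intersect}), which in the vertical-ribbon case reduces to the standard source/sink ordering argument. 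Finally, the surjectivity direction of your column-level identity --- reconstructing a unique super column from an arbitrary monomial $y_{p_1}\cdots y_{p_\alpha}(-z_{q_1})\cdots(-z_{q_\beta})$ of $e_k(\mathbf{y}/\mathbf{z})$ --- needs the explicit content-by-content greedy merging rule and its uniqueness argument given in the proof of Lemma~\ref{lem:vert-rib-enum}; your sketch asserts the bijection but does not supply this reconstruction step.
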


\begin{remark}
    Note that this tableaux formula becomes monomial positive for $\mathbf{S}^{\mathbf{a}, \mathbf{b}}_{\lambda/\mu}(\mathbf{y} / \overline{\mathbf{z}})$.
\end{remark}

\begin{remark}
    Without flag restrictions, supersymmetric Schur functions also have a formula via the so-called {\it bitableaux} (see \cite[\textsection\,I.5 ex.~23]{macdonald}), which are filled by the elements of the totally ordered set $\{1 < 2 < \ldots < 1' < 2' < \ldots\}$. 
    In contrast, there is no order between primed and unprimed elements in super tableaux. 
\end{remark}

We now give analogue of Theorem~\ref{thm:signed-expansion} for row flagged supersymmetric polynomials. 
\begin{definition}
    A {\textit row flagged $\mathbb{Z}$-SSYT} with row flags $\mathbf{a}' = (a'_i), \mathbf{b}' = (b'_i)$ as 
    a semistandard tableau $T = (T_{i, j})$ filled with integers so that $a'_i \le T_{i, j} \le b'_i$. 
%\end{definition}
%\begin{definition}
    
    For a given row flagged $\mathbb{Z}$-SSYT $\tilde{T} = (\tilde{T}_{i, j})$, we produce {\textit row flagged super tableau $T = (T_{i, j})$} of the same r-shape as follows:
    \begin{align*}
        T_{ij} = \tilde{T}_{ij} \in \mathbb{Z} \quad \text{or} \quad T_{ij} = (\tilde{T}_{ij} + c({i, j}))' \in \mathbb{Z}'.
    \end{align*}
    Let $\overline{ST}_{\lambda/\mu}(\mathbf{a}', \mathbf{b}')$ be the set of all row flagged super tableaux 
    produced from the set of row flagged $\mathbb{Z}$-SSYT of r-shape $\lambda/\mu$ with flags $\mathbf{a}', \mathbf{b}'$.
\end{definition}
\begin{corollary}[Tableaux formula for row flagged supersymmetric Schur functions]
    \label{cor:signed-expansion-dual}
    The following tableau formula holds:  
    \begin{align*}
       \overline{\mathsf{S}}^{\mathbf{a}', \mathbf{b}'}_{\lambda/\mu}(\mathbf{y} / \mathbf{z}) = \sum_{T\in \overline{ST}_{\lambda/\mu}(\mathbf{a}', \mathbf{b}')} wt(T).
    \end{align*}
\end{corollary}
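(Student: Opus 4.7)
The plan is to deduce the row-flagged tableaux formula directly from the column-flagged one (Theorem~\ref{thm:signed-expansion}) via the duality in Proposition~1. First, given the row flags $\mathbf{a}', \mathbf{b}'$ on the r-shape $\lambda/\mu$, let $\mathbf{a}, \mathbf{b}$ be the (uniquely determined) column flags on the conjugate r-shape $\lambda'/\mu'$ whose conjugate flags recover $\mathbf{a}', \mathbf{b}'$. The duality proposition then gives
$$\overline{\mathsf{S}}^{\mathbf{a}', \mathbf{b}'}_{\lambda/\mu}(\mathbf{y}/\mathbf{z}) \;=\; \mathsf{S}^{\mathbf{a}, \mathbf{b}}_{\lambda'/\mu'}(\overline{\mathbf{z}}/\overline{\mathbf{y}}),$$
and applying Theorem~\ref{thm:signed-expansion} to the right-hand side reduces the corollary to producing a weight-preserving bijection
$$\Phi : ST_{\lambda'/\mu'}(\mathbf{a}, \mathbf{b}) \;\longrightarrow\; \overline{ST}_{\lambda/\mu}(\mathbf{a}', \mathbf{b}'),$$
where the weight on the source is computed after the substitution $\mathbf{y} \mapsto \overline{\mathbf{z}}, \mathbf{z} \mapsto \overline{\mathbf{y}}$.

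The natural candidate for $\Phi$ is the transpose-and-swap-parity map: given $T \in ST_{\lambda'/\mu'}(\mathbf{a}, \mathbf{b})$, define $T^\ast$ on $\lambda/\mu$ by setting $T^\ast_{j,i} = r'$ whenever $T_{i,j} = r \in \mathbb{Z}$, and $T^\ast_{j,i} = r$ whenever $T_{i,j} = r' \in \mathbb{Z}'$. This is compatible with the weight substitution: an unprimed entry $r$ in $T$ contributes $y_r \mapsto -z_r$, which equals the weight of the primed entry $r'$ in $T^\ast$; a primed entry $r'$ in $T$ contributes $-z_r \mapsto y_r$, the weight of an unprimed $r$ in $T^\ast$. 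Thus the weights match term by term, so it only remains to verify that $\Phi$ is a well-defined bijection onto $\overline{ST}_{\lambda/\mu}(\mathbf{a}', \mathbf{b}')$.

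The verification splits into two parts. For semistandardness, note that the unprimed entries of $T$ (which form a column-flagged $\mathbb{Z}$-SSYT pattern) become the primed entries of $T^\ast$, and transposition trades row-weak/column-strict inequalities in $\lambda'/\mu'$ for column-weak/row-strict inequalities in $\lambda/\mu$; together with the content-shift identity $c_{\lambda/\mu}(j,i) = -c_{\lambda'/\mu'}(i,j)$ (built into the conjugate r-shape convention), this yields precisely the semistandardness for the row-flagged super tableau $T^\ast$. For the flags, the super-tableau bounds on $T$ force the primed entries in column $j$ of $\lambda'/\mu'$ to lie in $[a_j + c(\gamma_j),\, b_j + c(\delta_j)] = [a'_j, b'_j]$, which after transposition are exactly the bounds on the unprimed entries in row $j$ of $T^\ast$ — matching the row flags $(\mathbf{a}', \mathbf{b}')$. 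The symmetric statement for the primed entries of $T^\ast$ follows because the reverse conjugation $(\mathbf{a}', \mathbf{b}')' = (\mathbf{a}, \mathbf{b})$. Invertibility of $\Phi$ is evident from the construction.

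The main technical obstacle will be bookkeeping the content shifts carefully in the r-shape setting: the conjugate r-shape is $(\lambda', -r')$ with root content flipped, and one must check that the shifted contents $c(i,j)$ used to produce primed entries in $T$ and $T^\ast$ are consistent under transposition. Once this is set up, both the semistandardness and flag-bound checks for $\Phi$ reduce to routine inequality translations, completing the proof.
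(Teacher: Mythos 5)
Your proof is correct and follows essentially the same route as the paper: the paper's proof is the one-line ``Follows from [the duality proposition] and Theorem~\ref{thm:signed-expansion}'' (the reference there to Proposition~\ref{prop:g-enum-ribbon} is evidently a typo for the unlabeled duality proposition), and you fill in exactly the implicit step, namely the weight-preserving transpose-and-swap-parity bijection between $ST_{\lambda'/\mu'}(\mathbf{a},\mathbf{b})$ and $\overline{ST}_{\lambda/\mu}(\mathbf{a}',\mathbf{b}')$ under the substitution $\mathbf{y}\leftrightarrow\overline{\mathbf{z}}$. Your verification is sound: the uniform identity $\tilde{T}^{\ast}_{j,i} = \tilde{T}_{i,j} + c_{\lambda'/\mu'}(i,j)$ (using the r-shape content relation $c_{\lambda/\mu}(j,i) = -c_{\lambda'/\mu'}(i,j)$) converts the column-strict/row-weak conditions of $\tilde{T}$ to the row-weak/column-strict conditions of $\tilde{T}^{\ast}$, and together with semistandardness the column flags $(\mathbf{a},\mathbf{b})$ transfer exactly to the conjugate row flags $(\mathbf{a}',\mathbf{b}')$.
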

\begin{proof}
Follows from Proposition~\ref{prop:g-enum-ribbon} and Theorem~\ref{thm:signed-expansion}.
\end{proof}

\section{Ribbon decomposition formulas}\label{sec:ribbon}
\subsection{Outer ribbon decompositions}
In this subsection, we define the Hamel--Goulden $\#$ operation \cite{hg} on ribbons via cutting strips due to \cite{cyy}.

A {\it ribbon} is a connected skew partition whose diagram does not contain a $2\times 2$ block of cells. For a ribbon $\theta$, we call its bottom leftmost cell as the \textit{head}, and its top rightmost cell as the \textit{tail}.

%Ribbon is a skew Young diagram which does not contain 2x2 block of cells. Given a ribbon $\theta$, let's call its bottom leftmost cell as \textit{head}, and top rightmost cell as \textit{tail}.

%An \textit{outside (ribbon) decomposition} 
%$\Theta=(\theta_1,...,\theta_k)$ is a decomposition of $T$ into 
%ribbons s.t. 
%head of every ribbon lies on left or bottom perimeter and tail of 
%every ribbon lies on top or right perimeter.

For any skew r-shape $\lambda/\mu$, its \textit{outer ribbon decomposition} $\Theta=(\theta_1,...,\theta_k)$ is a decomposition of $\lambda/\mu$ into ribbons such that the head of every ribbon $\theta_i$ lies on the left or bottom perimeter and the tail lies on the top or right perimeter of the diagram. See Fig.~\ref{pic:young_ribbon_ex1} for an example. For a cell $\gamma \in \theta_i$, we say that it {\it goes up} if the cell adjacent above is also in $\theta_i$, or if $\gamma$ is the tail of $\theta_i$ and lies on top perimeter of $\lambda/\mu$; similarly, we say it {\it goes right} if the cell adjacent to the right is in $\theta_i$, or if $\gamma$ is the tail of $\theta_i$ and lies on the right perimeter of $\lambda/\mu$. Additionally, we define the direction for the top-right cell $\gamma$ in $\lambda/\mu$. Suppose that $\gamma \in \theta_i$. 
We say that it goes up, if the adjacent cell below $\gamma$ is also in $\theta_i$; similarly, we say it goes right, if the cell adjacent to the left is also in $\theta_i$.

%{\it Convention.} 
\begin{remark}
    Let $\lambda/\mu$ be any skew r-shape. The r-shape $\lambda/\mu$ can be decomposed into edgewise connected components (also r-shapes) $B_1, \ldots, B_n$ which are called {\it blocks}. It follows that the outer decomposition $\Theta$ can be decomposed into $\Theta_1, \ldots, \Theta_n$, where $\Theta_i$ is an outer decomposition of $B_i$ for all $i$. Note that a flagged supersymmetric Schur function of r-shape $\lambda/\mu$ can be represented as a product of flagged supersymmetric Schur functions of r-shapes $B_i$ for $i \in [n]$. Thus, from now on we shall consider only skew r-shapes $\lambda/\mu$ with connected diagrams, which is enough for our purposes as main determinantal formulas for disconnected r-shapes can be written via blocks of connected r-shapes. 
\end{remark}

Let $\lambda/\mu$ be connected r-shape. Every cell with content $c$ in $\lambda/\mu$ will either go up or right depending on $\Theta$. Using this property, we define the {\it cutting strip} $\Theta(\lambda/\mu)$ as a ribbon that contains cells with the same contents as the cells of $\lambda/\mu$, and every cell $\gamma \in \Theta(\lambda/\mu)$ goes up or right precisely when the cell in $\lambda/\mu$ with the content $c(\gamma)$ goes up or right. 

% Let $\lambda/\mu$  be a skew diagram and $\pi = (\pi_1, \ldots, \pi_m)$ a pipe vector of length $m=\lambda_1+\lambda'_1-1$. 
% Note that all contents of $\lambda/\mu$  lie in $[1 - \lambda'_1, \lambda_1-1]$. Let us assign $\pi_i$ to each cell with content $-\lambda'_1 + i$ for $i \in [m]$. Let us call two cells $\alpha_1,\alpha_2$ \textit{neighbors} if $c(\alpha_1)=c(\alpha_2)-1$. 
% We say that neighbor 
% cells with pipes $\pi_r,\pi_{r+1}$ are \textit{$\pi$-compatible} if 
% $\pi_r = (\cdot, x)$ and $\pi_{r+1} = (x, \cdot)$ for $x\in \{0, 1\}$. 
% The \textit{cutting strip} $\Theta(\pi)$ is a 
% ribbon defined by the pipe vector $\pi$, i.e. it is a ribbon 
% whose head's content is $1-\lambda'_1$, tail's content 
% is $\lambda_1 - 1$, and every pair of 
% neighboring cells are $\pi$-compatible.
% In other words, contents in the cutting strip originate from 
% the corresponding contents in the original skew shape.  
% Also notice that $\Theta(\pi)$ is an r-shape with the root 
% content $1-\lambda'_1$.

Let us denote the head and the tail of $\theta_i$ in the 
r-shape $\lambda/\mu$ by $\delta_i$ and $\gamma_i$, respectively. 
Each ribbon $\theta_i$ can be seen as 
a sub-diagram of $\Theta(\lambda/\mu)$, i.e. $\theta_i$ consists of cells whose contents lie in the interval $[c(\delta_i), c(\gamma_i)]$.
In general, let $p,q$ be contents of some cells in the diagram 
s.t. $p \le q$ and define 
$\Theta(p,q)$ as the sub-ribbon of $\Theta(\lambda/\mu)$ whose cell contents lie in $[p,q]$, and also
%\begin{itemize}
    %\item 
    $\Theta(p,q)=\varnothing$, if $p=q+1$, 
    %\item 
    and $\Theta(p,q)$ is undefined, if $p>q+1$. 
%\end{itemize}
Then we define 
\[
\theta_i\#\theta_j := \Theta(c(\delta_i), c(\gamma_j)).
\]
Notice that $\theta_i \# \theta_j$ is an r-shape 
$(\theta_i \# \theta_j, c(\delta_i))$ for all $i,j$. 

%\vspace{0.5em}

\subsubsection{Pipe vectors} For subsequent proofs we will also need the following notions. %the purposes of the subsequent sections, we define the concept of a "pipe-vector". 

Let $$\mathcal{P} := \{(0, 0), (0, 1), (1, 0), (1, 1)\},$$ where $0$ and $1$ mean horizontal and vertical unit directions, respectively. To each pair in $\mathcal{P}$ we match a {\it pipe}, whose direction of the first half is determined by the first value of the pair, and of the second half is determined by the second value, see Fig.~\ref{fig:possible-pipes}. 
\begin{figure}
    \centering
    \begin{tikzpicture}
        \draw[very thick] (1, 1) -- (2, 1);
    \end{tikzpicture}
    \hspace{1cm}
    \begin{tikzpicture}
        \draw[very thick] (1.5, 1.5) -- (2, 1.5) -- (2, 2);
    \end{tikzpicture}
    \hspace{1cm}
    \begin{tikzpicture}
        \draw[very thick] (1.5, 1.5) -- (1.5, 2) -- (2, 2);
    \end{tikzpicture}
    \hspace{1cm}
    \begin{tikzpicture}
        \draw[very thick] (1, 1) -- (1, 2);
    \end{tikzpicture}
    \caption{Four types of pipes corresponding to the pairs $(0, 0), (0, 1), (1, 0), (1,1)$, respectively.}
    \label{fig:possible-pipes}
\end{figure}
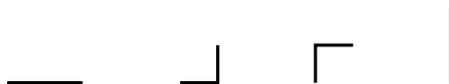
%We adopt the following notation to indicate whether $x \in \{0,1\}$ occupies 
%the first or second position in the tuple $p \in \mathcal{P}$: 
%\[
%p = (x, *) \quad \text{or} \quad p = (*, x).
%\]
%Let $m$ be some positive integer.  

A vector $\pi = (\pi_1,..., \pi_m) \in \mathcal{P}^{m}$ where $\pi_i= (\pi^1_i, \pi^2_i)$ is called a {\it pipe vector}, if the following conditions are satisfied:  

(1) for each $i \in [m-1]$ we have $\pi^{2}_i = \pi^{1}_{i+1}$, and

(2) $\pi^{1}_1 = \pi^{2}_1$ and $\pi^{1}_m = \pi^{2}_m$. 

%\textcolor{orange}{CHANGES} 
Let $m = \lambda_1 + \lambda'_1 - 1$ and $r$ be root content of r-shape $\lambda/\mu$. 
For the cutting strip $\Theta(\lambda/\mu)$ we define the pipe vector $\pi = (\pi_1, \ldots, \pi_m)$ as follows. Let $\gamma, \delta$ be cells with the contents $c - 1, c$ (resp.) in $\Theta(\lambda/\mu)$, such that $c \ge r + 1$. Then we define $\pi_i = (\pi_i^1, \pi_i^2)$ as follows: 
\begin{align*}
    \pi_{c - r + 1}^1 = 
    \begin{cases}
        0, & \text{ if } \gamma \text{ goes right} \\
        1, & \text{ if } \gamma \text{ goes up} 
    \end{cases} \qquad 
    \pi_{c - r + 1}^2 = 
    \begin{cases}
        0, & \text{ if } \delta \text{ goes right} \\
        1, & \text{ if } \delta \text{ goes up} 
    \end{cases}
\end{align*}
For $c = r$, let $\delta$ be the cell of $\Theta(\lambda/\mu)$ with the content $c$ (i.e. $\delta$ is the head of $\Theta(\lambda/\mu)$). Then we set $\pi_1 = (0, 0)$ if $\delta$ goes right, and $\pi_1 = (1, 1)$ if $\delta$ goes up. It is easy to see that $\pi$ is indeed a pipe vector.

Conversely, for any pipe vector $\pi \in \mathcal{P}^m$  we can construct the cutting strip $\Theta(\lambda/\mu)$, where $m = \lambda_1 + \lambda'_1 - 1$, which in turn, defines the ribbon decomposition $\Theta$. It suffices to specify the pipe vector $\pi$ to  uniquely determine the ribbon decomposition $\Theta$. 

%\vspace{0.5em}

\subsubsection{Decompositions of ribbons} Let us now define the right-arrow $\rightarrow$ and up-arrow  $\uparrow$ operations on Young diagrams $D_1, D_2$: 

1. The diagram $D_1\rightarrow D_2$ is constructed by stacking the bottom left cell of $D_2$ to the right of the upper right cell of $D_1$ (see Fig.~\ref{fig:ex-right-arrow}). %[Example???]

2. The diagram $D_1\uparrow D_2$ is constructed by stacking the bottom left cell of $D_2$ to the top of the upper right cell of $D_1$ (see Fig.~\ref{fig:ex-up-arrow}).

\ytableausetup{mathmode, boxsize=1em}
\begin{figure}
    \centering
    \begin{subfigure}[b]{0.45\textwidth}
    \centering   
    % \resizebox{2cm}{!}{
    \raisebox{0.25\height}{
    \begin{ytableau}
        \ & &  \\
        \ & & \none \\
    \end{ytableau}
    }
    % }
    $\rightarrow$
    % \resizebox{1cm}{!}{
    \raisebox{0.25\height}{
    \begin{ytableau}
        \ & \\
        \ & \\
    \end{ytableau}
    }
    % }
    $=$
    \raisebox{0.75\height}{
    \begin{ytableau}
        \none & \none & \none & & \\
        \ & \ & \ & & \\
        \ & \
    \end{ytableau}
    }
    \subcaption{Right arrow operation.}
    \label{fig:ex-right-arrow}
    \end{subfigure}
    \begin{subfigure}[b]{0.45\textwidth}
    \centering   
    % \resizebox{2cm}{!}{
    \raisebox{0.25\height}{
    \begin{ytableau}
        \ & &  \\
        \ & & \none \\
    \end{ytableau}
    }
    % }
    $\uparrow$
    % \resizebox{1cm}{!}{
    \raisebox{0.25\height}{
    \begin{ytableau}
        \ & \\
        \ & \\
    \end{ytableau}
    }
    % }
    $=$
    \raisebox{1.25\height}{
    \begin{ytableau}
        \none & \none & & \\
        \none & \none & & \\
        \ & \ & \ \\
        \ & \
    \end{ytableau}
    }
    \subcaption{Up arrow operation.}
    \label{fig:ex-up-arrow}
    \end{subfigure}
    \caption{}
\end{figure}

\ytableausetup{mathmode, boxsize=1.5em}
\begin{figure}
    \centering
    \begin{subfigure}[b]{0.5\textwidth}
    \centering
    \begin{tikzpicture}[y=-1cm, scale=0.7]
	\draw (3, 1) rectangle (4, 2);
	\draw (4, 1) rectangle (5, 2);
	\draw (2, 2) rectangle (3, 3);
	\draw (3, 2) rectangle (4, 3);
	\draw (4, 2) rectangle (5, 3);
	\draw (1, 3) rectangle (2, 4);
	\draw (2, 3) rectangle (3, 4);
	\draw (3, 3) rectangle (4, 4);
	\draw (4, 3) rectangle (5, 4);
	\draw (1, 4) rectangle (2, 5);
	\draw (2, 4) rectangle (3, 5);
	\draw [red, very thick] (1, 4.5) -- (2.5, 4.5) -- (2.5, 2.5) -- (3.5,2.5) -- (3.5, 1);
	\draw [red, very thick] (1, 3.5) -- (1.5, 3.5) -- (1.5, 3);
	\draw [red, very thick] (3.5, 4) -- (3.5, 3.5) -- (4.5, 3.5) -- (4.5, 1);
    \end{tikzpicture}
    \subcaption{An outer ribbon decomposition.}
    \label{pic:young_ribbon_ex1}
    \end{subfigure}
    \begin{subfigure}[b]{0.45\textwidth}
    \centering   
    \resizebox{2cm}{!}{
    \begin{ytableau}
         \none & \none & 3 \\
         \none & \none & 2 \\
         \none &0& 1 \\
         \none &-1& \none  \\
        -3&-2&\none\\
    \end{ytableau}
    }
    \subcaption{Cutting strip $\Theta(\lambda/\mu)$.}
    \label{fig:ex-1-stuff}
    \end{subfigure} \\[1em]
    \ytableausetup{mathmode, boxsize=0.4em}
\begin{subfigure}[b]{0.5\textwidth}
\[\arraycolsep=0.4cm\def\arraystretch{1}
\left(
\begin{array}{ccc}
    \begin{ytableau}
         \none & \none &  \\
         \none & &  \\
         \none & & \none  \\
        &&\none\\
    \end{ytableau} &     
    \begin{ytableau}
       ~ & ~
    \end{ytableau} & 
    \begin{ytableau}
         \none & \none &  \\
         \none & \none &  \\
         \none & &  \\
         \none & & \none  \\
        &&\none\\
    \end{ytableau}
    \\
    \begin{ytableau}
         \none &  \\
         &  \\
         & \none  \\
         &\none\\
    \end{ytableau} & 
    \begin{ytableau}
        \\
    \end{ytableau} &
     \begin{ytableau}
         \none &  \\
         \none &  \\
         &  \\
         & \none  \\
         &\none\\
    \end{ytableau} \\
    \begin{ytableau}
        \none & \ \\
        \ & \
    \end{ytableau} &
    \begin{ytableau}
        \none[undef]
    \end{ytableau} &
    \begin{ytableau}
        \none & \ \\
        \none & \ \\
        \ & \ 
    \end{ytableau}
    \end{array}
    \right)
    \]
    \subcaption{Ribbon matrix $(\theta_i\#\theta_j)_{i,j}^3$}
    \label{fig:ribbon-matrix-ex-1}
    \end{subfigure} 
\begin{subfigure}[b]{0.45\textwidth}
    \centering
    \ytableausetup{mathmode, boxsize=1.0em}
%    \centering
    \begin{ytableau}
         \none & \none &  \\
         \none & \none &  \\
         \none & &  \\
         \none & & \none  \\
        &&\none\\
    \end{ytableau}
    \raisebox{-5\height}{%
    $=$
    }
    \raisebox{-2\height}{%
    \begin{ytableau}
        \ 
    \end{ytableau}
    }
    \raisebox{-5\height}{%
    $\rightarrow$
    }
    \raisebox{-1\height}{%
    \begin{ytableau}
        \ \\
        \ \\
        \ \\
    \end{ytableau}
    }
    \raisebox{-5\height}{%
    $\rightarrow$
    }
    \raisebox{-1\height}{%
    \begin{ytableau}
        \ \\
        \ \\
        \ \\
    \end{ytableau}
    }
    \caption{Canonical decomposition of $\theta_1\#\theta_3$}
    \label{fig:canonical-ribbon-decomposition-ex-1}
    \end{subfigure}
\caption{}
\end{figure}

\ytableausetup{mathmode, boxsize=2em}

\begin{definition}[Canonical decomposition]
    For a ribbon $\theta$, the expression $\theta=\rho_1\rightarrow...\rightarrow \rho_k$ is called the \textit{canonical decomposition}, where each $\rho_i$ is a vertical strip. %[Example???]
    Similarly, the expression $\theta = \rho_1 \uparrow \ldots \uparrow \rho_k$ is called {\it up canonical decomposition}, where each $\rho_i$ is a horizontal strip.
\end{definition}

\begin{example}
    \label{exm:outer-dec}
    Let $\lambda/\mu=(4,4,4,2)/(2,1)$ be (usual) shape. An outer ribbon decomposition and 
    the corresponding cutting strip are shown in Fig. \ref{pic:young_ribbon_ex1}, \ref{fig:ex-1-stuff}. There are three ribbons in this decomposition: 
    $\theta_1=\Theta[-3,2], \theta_2=\Theta[-2,-2], \theta_3=\Theta[0,3].$ 
    %so we consider a determinant of a $3\times3$ matrix, let's denote 
    %elements of the matrix as $a_{ij}$, s.t. $1\le i,j\le 3$, all ribbons 
    %are drawn on fig.\ref{fig:ribbon-matrix-ex-1}, 
    The ribbon matrix  
    $[\theta_i\#\theta_j]$ is shown in Fig.~\ref{fig:ribbon-matrix-ex-1}.
    The canonical decomposition of the ribbon $\theta_1 \# \theta_3$ is shown on Fig.~\ref{fig:canonical-ribbon-decomposition-ex-1}. 
    The corresponding pipe vector is 
    $\pi=((0,0),(0,1), (1,1),(1,0),(0,1),(1,1),(1,1)).$
\end{example}

\subsection{Flagged Hamel--Goulden formulas}
In this subsection we state Hamel--Goulden-type formulas for the functions 
$\mathsf{S}_{\lambda/\mu}^{\mathbf{a}, \mathbf{b}}
(\mathbf{y} / \mathbf{z})$. %, and its corollaries. 

\vspace{0.5em}

The following is the key definition for obtaining ribbon determinantal formulas. 

\begin{definition}[Induced ribbon flags]
For a skew r-shape $\lambda/\mu$ and its outer ribbon decomposition $(\theta_1, \ldots, \theta_k)$, let $\theta_i \# \theta_j = \rho^{ij}_1\rightarrow ... \rightarrow \rho^{ij}_{k_{ij}}$ 
be corresponding canonical decompositions.  Denote by $\delta^{ij}_r$ and $\gamma^{ij}_r$ the bottom and top cells of $\rho^{ij}_r$. Let $M^{ij}_r := \max(c(\delta^{ij}_r))$ and $m^{ij}_r := \min(c(\gamma^{ij}_r))$, where $\max(c), \min(c)$ denote the rightmost and the leftmost cells  with the content $c$ in the r-shape $\lambda/\mu$. For flags $\mathbf{a} = (a_i), \mathbf{b} = (b_i)$, define the {\it induced ribbon flags} $\mathbf{a}^{ij}=(a^{ij}_r), \mathbf{b}^{ij}=(b^{ij}_r)$ as follows:  
%, induced by the flags $\mathbf{a, b}$: 
\begin{align*}
    a^{ij}_r = a_{\mathrm{col}(m^{ij}_r)} - 
    (\mu'_{\mathrm{col}(m^{ij}_r)} + 1) + \mathrm{row}(m^{ij}_r),
    \quad b^{ij}_r = b_{\mathrm{col}(M^{ij}_r)}-
    \lambda'_{\mathrm{col}(M^{ij}_r)} + \mathrm{row}(M^{ij}_r),
\end{align*}
with 
$\mathbf{a}^{ij} = \mathbf{b}^{ij} = \varnothing$ if $\theta_i \# \theta_j = \varnothing$ or  if $\theta_i \# \theta_j$ is undefined. 
\end{definition}
\begin{proposition}
    The induced ribbon flags $\mathbf{a}^{ij}, \mathbf{b}^{ij}$ are flags for the r-shape $\theta_i\# \theta_j$.
\end{proposition}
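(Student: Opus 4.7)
The plan is to verify the two column-flag inequalities for $(\mathbf{a}^{ij},\mathbf{b}^{ij})$ on $\theta_i\#\theta_j$ directly. Since $\theta_i\#\theta_j$ is a ribbon whose $r$-th column is exactly the vertical strip $\rho^{ij}_r$ of height $h_r := |\rho^{ij}_r|$, the adjacent columns $r,r+1$ automatically satisfy the flag hypothesis $\mu'_r<\lambda'_{r+1}$, and a direct calculation of the sub-ribbon's column extents gives $\mu'_r-\mu'_{r+1}+1=h_{r+1}$ and $\lambda'_r-\lambda'_{r+1}+1=h_r$. The conditions to check therefore reduce to
\begin{equation*}
a^{ij}_r-a^{ij}_{r+1}\le h_{r+1},\qquad b^{ij}_r-b^{ij}_{r+1}\le h_r.
\end{equation*}
I will write $m^{ij}_r=(q_r,p_r)$ and $M^{ij}_r=(Q_r,P_r)$ for the coordinates in $\lambda/\mu$, and record that $c(\gamma^{ij}_{r+1})-c(\gamma^{ij}_r)=h_{r+1}$ and $c(\delta^{ij}_{r+1})-c(\delta^{ij}_r)=h_r$.

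The central geometric ingredient is a \emph{step lemma} for any skew shape: as the content $d$ of a cell increases by $1$, the leftmost (topmost) cell on diagonal $d$ in $\lambda/\mu$ either shifts one column right in the same row, or one row up in the same column; the rightmost (bottommost) cell obeys the same dichotomy. Its proof is elementary from the partition inequalities $\mu_{i-1}\ge\mu_i$ and $\lambda_{i-1}\ge\lambda_i$: skipping two rows would force $\mu_{i-2}\ge\mu_{i-1}$ (resp.\ $\lambda_{i-2}\ge\lambda_{i-1}$) to violate the cell-in-shape inequality that pins the extremal cell at the putative new row. Iterating the lemma $h_{r+1}$ times along the content trajectory from $c(\gamma^{ij}_r)$ to $c(\gamma^{ij}_{r+1})$ decomposes the transition from $m^{ij}_r$ to $m^{ij}_{r+1}$ into $u$ right-steps and $h_{r+1}-u$ up-steps, so
\begin{equation*}
p_{r+1}-p_r=u\ge 0 \quad\text{and}\quad q_r-q_{r+1}=h_{r+1}-u;
\end{equation*}
the analogous argument for the rightmost trajectory from $M^{ij}_r$ to $M^{ij}_{r+1}$ yields $P_{r+1}-P_r=v\ge 0$ and $Q_r-Q_{r+1}=h_r-v$.

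With this in hand, substituting the definitions of $a^{ij}_r$ and expanding gives
\begin{equation*}
a^{ij}_r-a^{ij}_{r+1}=(a_{p_r}-a_{p_{r+1}})-(\mu'_{p_r}-\mu'_{p_{r+1}})+(q_r-q_{r+1}).
\end{equation*}
Telescoping the original flag inequality $a_k-a_{k+1}\le\mu'_k-\mu'_{k+1}+1$ over the $u$ consecutive columns $k=p_r,\ldots,p_{r+1}-1$ gives $a_{p_r}-a_{p_{r+1}}\le\mu'_{p_r}-\mu'_{p_{r+1}}+u$; the $\mu'$ terms then cancel, leaving the bound $u+(h_{r+1}-u)=h_{r+1}$. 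An identical argument using $b_k-b_{k+1}\le\lambda'_k-\lambda'_{k+1}+1$ over the $v$ columns of the rightmost trajectory settles the $b$-flag inequality. The main obstacle is the step lemma itself, together with the side condition that the flag hypothesis $\mu'_k<\lambda'_{k+1}$ (resp.\ its $\lambda$-analog) applies at every intermediate column $k$ during telescoping; this latter follows since each right-step in the trajectory produces two adjacent cells of $\lambda/\mu$ sharing a row, forcing the hypothesis on those consecutive columns.
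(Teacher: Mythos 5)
Your proof is correct and follows essentially the same logical path as the paper: expand the definition of $a^{ij}_r-a^{ij}_{r+1}$, telescope the original column-flag inequality $a_k-a_{k+1}\le\mu'_k-\mu'_{k+1}+1$ over the columns between $\mathrm{col}(m^{ij}_r)$ and $\mathrm{col}(m^{ij}_{r+1})$, and then simplify using the content relation $c = \mathrm{col}-\mathrm{row}+\mathrm{const}$. The paper does exactly this, compressing the telescoping into a single inequality step and expressing the final bound as $\mathrm{row}(\gamma_r)-\mathrm{row}(\gamma_{r+1})+1$, which equals your $h_{r+1}$. What you add is the explicit ``step lemma'' decomposing the diagonal trajectory of $\min(c)$ into unit right-steps and up-steps; its main value is to make precise why the flag hypothesis $\mu'_k<\lambda'_{k+1}$ holds on each intermediate column $k$ during telescoping (each right-step places two cells of $\lambda/\mu$ in the same row, forcing the hypothesis), a verification the paper leaves implicit. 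So this is the same argument with the intermediate-column justification spelled out rather than a genuinely different route.
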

\begin{proof}
    Let us check this condition for $\mathbf{a}^{ij}$ (which is very similar for $\mathbf{b}^{ij}$).
    Let $\rho^{ij}_r, \rho^{ij}_{r+1}$ be vertical 
    ribbons from the canonical decomposition of 
    $\theta_i\# \theta_j$. Let $\gamma_r, \gamma_{r+1}$ be top cells 
    of $\rho^{ij}_r, \rho^{ij}_{r+1}$, respectively. Then we need to show that 
    $$a^{ij}_r - a^{ij}_{r+1} \le \mathrm{row}(\gamma_r) - \mathrm{row}(\gamma_{r+1}) + 1,$$
    which follows from the following fact: let $m_i = \min(c(\gamma_i))$ for $i=r,r+1$; by definition, we have  
    \begin{align*}
    a^{ij}_r - a^{ij}_{r+1} &= a_{\mathrm{col}(m_r)} - \mu'_{\mathrm{col}(m_r)} + \mathrm{row}(m_r) - a_{\mathrm{col}(m_{r+1})} + \mu'_{\mathrm{col}(m_{r+1})} - \mathrm{row}(m_{r+1})\\ 
    &\le \mathrm{col}(m_{r+1}) - \mathrm{col}(m_r) + \mathrm{row}(m_r) - \mathrm{row}(m_{r+1}) \\
    &= c(m_{r+1}) - c(m_{r}) \\ 
    &= c(\gamma_{r+1}) - c(\gamma_{r}) \\ 
    &= \mathrm{row}(\gamma_r) - \mathrm{row}(\gamma_{r+1}) + 1
    \end{align*}
    as needed.\footnote{ %\textcolor{orange}{ADDED NEW REMARK}
        It might happen that $\mathrm{col}(m_{r+1}) - \mathrm{row}(m_{r+1}) \neq c(m_{r+1})$ (as $\lambda/\mu$ is an r-shape with a shift), but $(\mathrm{col}(m_{r+1}) - \mathrm{row}(m_{r+1})) - (\mathrm{col}(m_r) - \mathrm{row}(m_r)) = c(m_{r+1}) - c(m_r)$ as all contents are shifted by the same number and difference of any two contents stays the same for any shift.
    }
\end{proof}

%We call $\mathbf{a}^{ij}, \mathbf{b}^{ij}$ \textit{ribbon flags (of type-$\mathrm{I}$)} induced by the flags $\mathbf{a, b}$. 

\begin{example}
    \label{ex:ribbon-flags-2}
    Consider the shape and outer decomposition from Example~\ref{exm:outer-dec}. Let $\mathbf{a} = (a_i), \mathbf{b} = (b_i)$ be some flags for the shape $\lambda/\mu = (4,4,4,2) / (2,1)$.
    Let us show the induced ribbon flags for the shape $\theta_2 \# \theta_1 = \rho^{21}_1 \rightarrow \rho^{21}_2$ (Fig.~\ref{ex:ribbon-flags-3a}), where $\rho^{21}_1, \rho^{21}_2$ are vertical ribbons. The cells $m^{21}_1, M^{21}_1, m^{21}_2, M^{21}_2$ are shown in Fig.~\ref{ex:ribbon-flags-3b} and we have 
    \begin{align*}
        \mathbf{a}^{21} = (a_2, a_3), \quad \mathbf{b}^{21} = (b_2, b_4), \quad \mathbf{a'}^{21} = (a_2, a_3 + 2), \quad \mathbf{b'}^{21} = (b_2 - 2, b_4 + 1),
    \end{align*}
    where $\mathbf{a'}^{21}, \mathbf{b'}^{21}$ are conjugate flags to  $\mathbf{a}^{21}, \mathbf{b}^{21}$. 
\end{example}

\begin{remark}
Notice that in the previous example, when we calculate elements of the row flags $\mathbf{a'}^{21}$ and $\mathbf{b'}^{21}$, we use the \textit{shifted} contents of the \textit{r-shape} $\theta_i \# \theta_j$. The second element of $\mathbf{a'}^{21}$ is $a_3 + 2$, where $2$ is the content of the top cell of $\rho^{21}_2$. When we consider $\theta_i \# \theta_j$ as a usual shape (not r-shape), the content of the top cell of $\rho^{21}_2$ would be $1$.    
\end{remark}

\begin{figure}
    \centering
    \begin{subfigure}[b]{0.4\textwidth}
    \centering   
    \begin{ytableau}
         \none & 2 \\
         0& 1 \\
         -1& \none  \\
         -2&\none\\
    \end{ytableau}
    \subcaption{The ribbon $\theta_2 \# \theta_1$}
    \label{ex:ribbon-flags-3a}
    \end{subfigure}
    \begin{subfigure}[b]{0.4\textwidth}
    \centering
    \begin{ytableau}
        \none & \none & m^{21}_2 & \\
        \none & m^{21}_1 & & \\
        & & & M^{21}_2 \\
        & M^{21}_1 \\
    \end{ytableau}
    \subcaption{The cells $m^{21}_r, M^{21}_r$ in $\lambda/\mu$}
    \label{ex:ribbon-flags-3b}
    \end{subfigure}
    \caption{}
\end{figure}

Our main formulas are now stated as follows.

\begin{theorem}[Hamel--Goulden formulas for flagged supersymmetric Schur functions]
    \label{thm:HG-1}
    Let $(\theta_1, \ldots, \theta_k)$ be an outer ribbon decomposition of connected r-shape 
    $\lambda/\mu$. Then the following determinantal formulas hold: 
    \[
    \mathsf{S}_{\lambda/\mu}^{\mathbf{a}, \mathbf{b}}(\mathbf{y} / \mathbf{z}) = 
    \det \left[ \mathsf{S}^{\mathbf{a}^{ij}, \mathbf{b}^{ij}}_{\theta_i\# \theta_j}
    (\mathbf{y} / \mathbf{z}) \right]_{1 \le i,j \le k},
    \]
    where $\mathsf{S}^{\mathbf{a}^{ij}, \mathbf{b}^{ij}}_{\varnothing}
    (\mathbf{y} / \mathbf{z}) = 1$ and $\mathsf{S}^{\mathbf{a}^{ij}, \mathbf{b}^{ij}}_{undefined}
    (\mathbf{y} / \mathbf{z}) = 0$.
\end{theorem}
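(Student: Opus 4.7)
The plan is to prove this by a Lindström--Gessel--Viennot (LGV) style argument adapted to ribbons through the cutting-strip picture of \cite{cyy}, combined with the super tableaux formula of Theorem~\ref{thm:signed-expansion}. Expanding the right-hand determinant as
\begin{equation*}
\sum_{\sigma \in S_k} \mathrm{sgn}(\sigma) \prod_{i=1}^k \mathsf{S}^{\mathbf{a}^{i\sigma(i)},\,\mathbf{b}^{i\sigma(i)}}_{\theta_i\#\theta_{\sigma(i)}}(\mathbf{y}/\mathbf{z})
\end{equation*}
and applying Theorem~\ref{thm:signed-expansion} to each ribbon factor produces a signed weighted sum over $k$-tuples of flagged super tableaux, one per ribbon $\theta_i\#\theta_{\sigma(i)}$. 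The goal is then to construct a sign-reversing, weight-preserving involution cancelling all contributions with $\sigma\ne\mathrm{id}$, and to match the surviving $\sigma=\mathrm{id}$ tuples bijectively with super tableaux of $\lambda/\mu$ carrying the original flags $\mathbf{a},\mathbf{b}$.

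First I would encode each flagged super filling of a ribbon $\theta_i\#\theta_j$ as a lattice path. Using the canonical decomposition $\theta_i\#\theta_j = \rho^{ij}_1 \to \cdots \to \rho^{ij}_{k_{ij}}$, the entries of each vertical strip $\rho^{ij}_r$ are confined to $[\mathbf{a}^{ij}_r, \mathbf{b}^{ij}_r]$, which lets us represent the filling as a path on a grid whose horizontal axis is labelled by the contents (i.e.\ positions along the cutting strip $\Theta(\lambda/\mu)$) and whose vertical axis is labelled by the super entries. The key point is that the starting and ending heights of such a path depend only on the first and last columns of $\theta_i\#\theta_j$; because $m^{ij}_1$ lies in the same column of $\lambda/\mu$ as the head of $\theta_i$ (up to the content shift absorbed by $\mathrm{row}(m^{ij}_1)$), and similarly $M^{ij}_{k_{ij}}$ with the tail of $\theta_j$, the induced flags $\mathbf{a}^{ij}_1$ and $\mathbf{b}^{ij}_{k_{ij}}$ depend on $i$ alone (resp.\ $j$ alone). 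This is exactly what one needs in order to fix source points $P_i$ and sink points $Q_j$ such that paths encoding fillings of $\theta_i\#\theta_j$ run precisely $P_i \leadsto Q_j$.

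Next I would apply the LGV sign-reversing involution. The sources $P_1,\ldots,P_k$ and sinks $Q_1,\ldots,Q_k$ are arranged so that any tuple $P_i\leadsto Q_{\sigma(i)}$ with $\sigma\ne\mathrm{id}$ necessarily contains a pair of crossing paths; swapping their tails at the first intersection yields a fixed-point-free involution of opposite sign. What survives is the sum over non-intersecting $k$-tuples of paths $P_i\leadsto Q_i$. Along the cutting strip $\Theta(\lambda/\mu)$ such tuples glue into a single filling of $\lambda/\mu$: the contents consumed by $\theta_i$ are exactly those in $[c(\delta_i),c(\gamma_i)]$, and the induced flag constraints on $\theta_i\#\theta_i$ match the original flags on the corresponding cells of $\lambda/\mu$ by the very definition of $\mathbf{a}^{ij},\mathbf{b}^{ij}$. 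A final appeal to Theorem~\ref{thm:signed-expansion} identifies the resulting weighted count with $\mathsf{S}^{\mathbf{a},\mathbf{b}}_{\lambda/\mu}(\mathbf{y}/\mathbf{z})$.

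The main obstacle I expect lies in the supersymmetric ingredient. Because the primed and unprimed alphabets are \emph{not} totally ordered (see the remark after Theorem~\ref{thm:signed-expansion}), the lattice-path encoding must be designed so that paths retain both the integer value and the primed/unprimed type of each entry, and so that the LGV tail-swap respects both pieces of information and the weight $wt(T)$. Verifying this, together with the content-shift bookkeeping (the $r$-shape shift must be compatible with the $\mathrm{col}/\mathrm{row}$-dependent offsets appearing in the induced flags), is the heart of the argument; this is precisely where the specific choice to define $\mathbf{a}^{ij},\mathbf{b}^{ij}$ via the extremal cells $m^{ij}_r,M^{ij}_r$ in the ambient shape $\lambda/\mu$ -- rather than internally inside the ribbon $\theta_i\#\theta_j$ -- becomes essential.
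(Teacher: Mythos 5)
Your plan follows the same high-level route as the paper's Section~\ref{sec:proofs}: encode fillings of each $\theta_i\#\theta_j$ as lattice paths along the cutting strip with source depending only on $i$ (the head content $c(\delta_i)$) and sink depending only on $j$ (the tail content $c(\gamma_j)$), track the supersymmetric alphabet by carrying both a $y$-weighted and a $(-z)$-weighted copy of every horizontal step, and finish by LGV. Those are exactly the $\mathbb{Z}$-lattice and super lattice constructions of \textsection\,\ref{subsec:lattice}--\ref{subsec:enumerator}.

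There is, however, one substantive step you assert as if it were automatic that it is not, and the paper spends real effort on it. In the classical LGV setup all sources lie on one side of a planar strip and all sinks on the other, in a compatible order, so non-identity permutations force crossings for free. Here that fails: depending on the pipe direction at a ribbon's head or tail, the source $C_i$ (resp.\ sink $D_i$) can sit on either the upper or the lower boundary of the lattice, so a priori a path $C_i\leadsto D_{\sigma(i)}$ with $\sigma\neq\mathrm{id}$ might slip past the others without crossing. The paper proves this cannot happen in Lemma~\ref{lemma:bad-paths-intersect}, using the structural Lemma~\ref{lem:restrictions-for-heads-and-tails-ribbons} about which relative head/tail configurations an outer ribbon decomposition can produce. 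Without an argument at that point your sign-reversing involution is not known to cancel all off-diagonal terms. Similarly, the claim that a non-intersecting path system ``glues into a single filling of $\lambda/\mu$'' respecting the flags $\mathbf{a},\mathbf{b}$ (and that every such filling arises) is Lemma~\ref{thm:bijection-super-tableau-non-intersecting-paths}; its proof hinges on showing (Lemmas~\ref{lem:path-tableau-1}--\ref{lem:no-wrong-cells}) that any semistandardness violation across a ribbon boundary forces an intersection, which is not ``by the very definition of $\mathbf{a}^{ij},\mathbf{b}^{ij}$.'' Both of these need to be supplied before the LGV conclusion is valid.
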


The proof will be presented in the next section. It generally relies on the Lindstr\"om-Gessel-Viennot (LGV) lemma \cite{lgv}. 
We construct certain weighted lattice which we call {\it super lattice}, based on the ribbon decomposition and given flags. We then show that paths correspond to super tableaux with induced ribbon flags, and establish weight-preserving bijection between nonintersecting path systems and flagged super tableaux. 

\vspace{0.5em}

We can now state similar formulas for row flagged supersymmetric Schur functions with row flags $\mathbf{a}', \mathbf{b}'$. 

\begin{definition}[Induced row ribbon flags]
For a skew r-shape $\lambda/\mu$ and its outer ribbon decomposition $(\theta_1, \ldots, \theta_k)$, let $\theta_i \# \theta_j = \rho^{ij}_1\uparrow ... \uparrow \rho^{ij}_{k_{ij}}$ 
be corresponding up canonical decompositions.  Denote by $\delta^{ij}_r$ and 
$\gamma^{ij}_r$ the leftmost and rightmost cells of $\rho^{ij}_r$. Let 
$M^{ij}_r := \max(c(\gamma^{ij}_r))$ and 
$m^{ij}_r := \min(c(\delta^{ij}_r))$. 
For row flags $\mathbf{a}' = (a'_i), \mathbf{b}' = (b'_i)$, define the {\it induced row ribbon flags} $\mathbf{a}'^{ij}=(a'^{ij}_r), \mathbf{b}'^{ij}=(b'^{ij}_r)$ as follows:  
%, induced by the flags $\mathbf{a, b}$: 
\begin{align*}
    a'^{ij}_r = a'_{\mathrm{row}(m^{ij}_r)},
    \quad b'^{ij}_r = b'_{\mathrm{row}(M^{ij}_r)}, 
\end{align*}
with 
$a'^{ij} = b'^{ij} = \varnothing$ if $\theta_i \# \theta_j = \emptyset$ or if $\theta_i \# \theta_j$ is undefined. 
\end{definition}

\begin{corollary}[Hamel--Goulden formulas for row flagged supersymmetric Schur functions]
    \label{thm:HG-1-dual}
    Let $(\theta_1, \ldots, \theta_k)$ be an outer ribbon decomposition of connected r-shape $\lambda/\mu$. Then the following determinantal formulas hold: 
    \[
    \overline{\mathsf{S}}_{\lambda/\mu}^{\mathbf{a}', \mathbf{b}'}(\mathbf{y} / \mathbf{z}) = 
    \det \left[ \overline{\mathsf{S}}^{\mathbf{a}'^{ij}, \mathbf{b}'^{ij}}_{\theta_i\# \theta_j}
    (\mathbf{y} / \mathbf{z}) \right]_{1 \le i,j \le k}
    \]
    where $\overline{\mathsf{S}}^{\mathbf{a}'^{ij}, \mathbf{b}'^{ij}}_{\varnothing}
    (\mathbf{y} / \mathbf{z}) = 1$ and $\overline{\mathsf{S}}^{\mathbf{a}'^{ij}, \mathbf{b}'^{ij}}_{undefined}
    (\mathbf{y} / \mathbf{z}) = 0$.
\end{corollary}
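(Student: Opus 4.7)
The plan is to derive this corollary from Theorem~\ref{thm:HG-1} via the duality proposition for column and row flagged functions. Given row flags $\mathbf{a}', \mathbf{b}'$ on $\lambda/\mu$, I would first invert the conjugation construction to obtain column flags $\mathbf{a}, \mathbf{b}$ on the conjugate r-shape $\lambda'/\mu'$ whose conjugates are $\mathbf{a}', \mathbf{b}'$. The duality proposition then yields
\[
\overline{\mathsf{S}}^{\mathbf{a}', \mathbf{b}'}_{\lambda/\mu}(\mathbf{y}/\mathbf{z}) = \mathsf{S}^{\mathbf{a}, \mathbf{b}}_{\lambda'/\mu'}(\overline{\mathbf{z}}/\overline{\mathbf{y}}).
\]
Next, I would apply Theorem~\ref{thm:HG-1} to the right-hand side using the conjugate outer ribbon decomposition $(\theta_1', \ldots, \theta_k')$ of $\lambda'/\mu'$, and then apply the duality proposition a second time to each entry of the resulting determinant to convert each column flagged entry back into a row flagged one evaluated at $(\mathbf{y}/\mathbf{z})$.

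The main technical obstacle is to match the resulting matrix (up to transposition) with the matrix in the corollary's statement. This requires two compatibility checks. First, the ribbon operation commutes with conjugation up to a swap of indices, namely $(\theta_i' \# \theta_j')' = \theta_j \# \theta_i$; this follows from the definition $\theta_i\#\theta_j = \Theta(c(\delta_i), c(\gamma_j))$ together with the observation that transposition interchanges the head and tail of a ribbon, negates contents, and sends the cutting strip of $\lambda/\mu$ to the content-reversal of the cutting strip of $\lambda'/\mu'$; the ``undefined'' case is preserved under this swap as well. Second, the conjugate of the induced column ribbon flags $\mathbf{a}^{ij}, \mathbf{b}^{ij}$ (for $\theta_i' \# \theta_j'$ in the conjugate setup) must coincide with the induced row ribbon flags $\mathbf{a}'^{ji}, \mathbf{b}'^{ji}$ on $\theta_j \# \theta_i$. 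This is a direct but careful computation tracking how the extremal cells $m^{ij}_r, M^{ij}_r$ used in the column-induced definition correspond under transposition to the extremal cells used in the row-induced definition, using the fact that a vertical strip in the canonical $\to$-decomposition of a ribbon becomes a horizontal strip in the $\uparrow$-decomposition of its transpose, together with the explicit formulas for conjugate flags given at the start of Section~\ref{sec:flag}.

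After these identifications, the $(i,j)$ entry of the matrix produced by the chain of manipulations equals $\overline{\mathsf{S}}^{\mathbf{a}'^{ji}, \mathbf{b}'^{ji}}_{\theta_j \# \theta_i}(\mathbf{y}/\mathbf{z})$, so this matrix is precisely the transpose of the matrix appearing in the corollary. Invariance of the determinant under transposition then concludes the proof.
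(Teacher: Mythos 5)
Your proposal follows essentially the same route as the paper's own proof: conjugate the row flags to obtain column flags on $\lambda'/\mu'$, apply the column–row duality, invoke Theorem~\ref{thm:HG-1} with the conjugate ribbon decomposition, and then use the identity $(\theta'_i\#\theta'_j)'=\theta_j\#\theta_i$ together with the correspondence of conjugate induced ribbon flags and transposition of the determinant to convert back. The two compatibility checks you isolate (ribbon conjugation and flag conjugation) are exactly the two facts the paper verifies, so the proposal is correct and matches the intended argument.
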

\begin{proof}
    We consider the same outer decomposition for the r-shape $\lambda'/\mu'$, let us denote it $(\theta'_1, \ldots, \theta'_k)$, where each ribbon $\theta'_i$ is an r-shape $(\theta'_i,\ -c(\gamma_i))$ where $\gamma_i$ is the tail of $\theta_i$. Note that $\theta_i \# \theta_j = (\theta'_j \# \theta'_i)'$. 
    Let $\zeta_i, \xi_i$ be the leftmost and rightmost cells in $i$-th row of $\lambda/\mu$. Let $\mathbf{a}, \mathbf{b}$ be conjugate column flags to $\mathbf{a}', \mathbf{b}'$. Notice that $\mathbf{a}, \mathbf{b}$ are column flags for the r-shape $\lambda'/\mu'$, thus $\mathbf{a} = (a'_r + c(\zeta_r)),\ \mathbf{b} = (b'_r + c(\xi_r))$. Let $\mathbf{a}^{ij} = (a^{ij}_r), \mathbf{b}^{ij} = (b^{ij}_r)$ be the induced ribbon flags for $\mathbf{a}, \mathbf{b}$. Note that $\mathbf{a}^{ij}, \mathbf{b}^{ij}$ are column flags for the r-shape $\theta'_i \# \theta'_j$. We have 
    \begin{align*}
    a'^{ij}_r &= a'_{\mathrm{row}(m^{ij}_r)} = a_{\mathrm{row}(m^{ij}_r)} - c(\zeta_{\mathrm{row}(m^{ij}_r)}) + c(m^{ij}_r) - c(m^{ij}_r) \\ 
    &= a_{\mathrm{row}(m^{ij}_r)} - \mu_{\mathrm{row}(m^{ij}_r)} - 1 + \mathrm{col}(m^{ij}_r) - c(m^{ij}_r) \\ 
    &= a^{ji}_r - c(m^{ij}_r),
    \end{align*}
    which shows that $\mathbf{a}^{ji}$ is conjugate to $\mathbf{a}'^{ij}$. It can be shown that $\mathbf{b}^{ji}$ is conjugate to $\mathbf{b}'^{ij}$. We then have 
    \begin{align*}
        \overline{\mathsf{S}}_{\lambda/\mu}^{\mathbf{a}', \mathbf{b}'}(\mathbf{y} / \mathbf{z}) &= \mathsf{S}_{\lambda'/\mu'}^{\mathbf{a}, \mathbf{b}}(\overline{\mathbf{z}} / \overline{\mathbf{y}}) 
        = \det\left[ \mathsf{S}_{\theta'_j \# \theta'_i}^{\mathbf{a}^{ji}, \mathbf{b}^{ji}}(\overline{\mathbf{z}} / \overline{\mathbf{y}}) \right]_{1 \le i,j \le k} 
        = \det\left[ \overline{\mathsf{S}}_{\theta_i \# \theta_j}^{\mathbf{a}'^{ij}, \mathbf{b}'^{ij}}(\mathbf{y} / \mathbf{z}) \right]_{1 \le i,j \le k}
    \end{align*}
    as needed. 
\end{proof}

%using the shape $\lambda/\mu$, an outer decomposition $\Theta = (\theta_1, \ldots, \theta_k)$, and flags $\mathbf{a, b}$, we call the lattice {\it super lattice}, then define the starting $C_i$ and ending nodes $D_i$ for paths, $i\in [k]$. We prove that each path from $C_i$ to $D_j$ corresponds to some flagged super tableau of shape $\theta_i \# \theta_j$ with flags $\mathbf{a}^{ij}, \mathbf{b}^{ij}$ (ribbon flags induced by $\mathbf{a, b}$), then we establish a weight presrving bijection between  systems of nonintersecting paths $\mathbf{P} = (P_1, \ldots, P_k$) (where $P_i$ is a path from $C_i$ to $D_i$), and flagged super tableaux of shape $\lambda/\mu$ with flags $\mathbf{a, b}$. We enumerate all the nonintersecting systems of paths using LGV lemma, which gives proof of the theorem.

Taking the $g$-specialization we now obtain Hamel--Goulden-type formulas for dual refined canonical stable Grothendieck polynomials with determinant entries written via flagged dual Grothendieck enumerators. 

\begin{corollary}[Hamel--Goulden formulas for dual refined canonical stable Grothendieck polynomials]
    Let $(\theta_1, \ldots, \theta_k)$ be an outer ribbon decomposition of  
    $\lambda/\mu$, and 
$\mathbf{a} = (a_i), \mathbf{b} = (b_i)$ be column flags defined 
as follows:
$%\begin{align*}
    a_i =  -i + 2, 
     b_i = \lambda'_i + m - 1,  i \ge 1. 
$ %\end{align*}
 Then the following determinantal formulas hold:
    \begin{align*}
        g_{\lambda/\mu}(\mathbf{x}_m; \boldsymbol{\alpha},\boldsymbol{\beta}) = 
        \det \left[ \mathsf{g}_{\theta_i \# \theta_j}^{\mathbf{a}^{ij}, 
        \mathbf{b}^{ij}}(\mathbf{x}_m; \boldsymbol{\alpha},\boldsymbol{\beta}) \right]_{1 \le i, j \le k},
    \end{align*}
    where the induced ribbon flags are given by 
    \begin{align*}
    \mathbf{a}^{ij} = (-c(m^{ij}_r) - \mu'_{\mathrm{col}(m^{ij}_r)} + 1)_{r \in [k^{ij}]}, \qquad 
    \mathbf{b}^{ij} = (\mathrm{row}(M^{ij}_r) + m - 1)_{r \in [k^{ij}]}.
    \end{align*}
    %if we choose $\mathbf{p^{ij}, q^{ij}}$ to be ribbon flags of type-$\mathrm{I}$, or 
    %\begin{align*}
    %    \mathbf{p}^{ij} = 
    %    (\lambda'_{col(M^{ij}_r)} + m - 1)_r^{k^{ij}} \qquad 
    %    \mathbf{q}^{ij} = 
    %    (-col(m^{ij}_r) + 2)_r^{k^{ij}}
    %\end{align*}
    %if we choose $\mathbf{p^{ij}, q^{ij}}$ to be ribbon flags of type-$\mathrm{II}$.
\end{corollary}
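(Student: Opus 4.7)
The plan is to derive this corollary as a direct consequence of Theorem~\ref{thm:HG-1} combined with Proposition~\ref{prop:g-enum-ribbon}, since the only new content is the explicit description of the induced ribbon flags under the $g$-specialization.

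First, I would invoke Proposition~\ref{prop:g-enum-ribbon}: for the chosen column flags $a_i = -i + 2$ and $b_i = \lambda'_i + m - 1$, the $g$-specialization of the flagged supersymmetric Schur function reduces to the dual refined canonical stable Grothendieck polynomial, that is, $g_{\lambda/\mu}(\mathbf{x}_m; \boldsymbol{\alpha}, \boldsymbol{\beta}) = \mathsf{S}^{\mathbf{a}, \mathbf{b}}_{\lambda/\mu}(\mathbf{y}/\mathbf{z})\big|_g$. Next, reducing to the connected case via the block decomposition noted in the remark preceding the definition of cutting strip (so that Theorem~\ref{thm:HG-1} applies on each connected component), I apply Theorem~\ref{thm:HG-1} to expand $\mathsf{S}^{\mathbf{a}, \mathbf{b}}_{\lambda/\mu}(\mathbf{y}/\mathbf{z})$ as the determinant $\det\bigl[\mathsf{S}^{\mathbf{a}^{ij}, \mathbf{b}^{ij}}_{\theta_i \# \theta_j}(\mathbf{y}/\mathbf{z})\bigr]_{i,j}$. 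Because $g$-specialization is merely a substitution of the $\mathbf{y}$ and $\mathbf{z}$ variables, it commutes with the determinant, giving the claimed identity with entries $\mathsf{g}^{\mathbf{a}^{ij}, \mathbf{b}^{ij}}_{\theta_i \# \theta_j}(\mathbf{x}_m; \boldsymbol{\alpha}, \boldsymbol{\beta})$.

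The remaining task is to verify that, under the stated choice of $\mathbf{a}$ and $\mathbf{b}$, the induced ribbon flags simplify to the explicit form claimed in the statement. Plugging $a_i = -i+2$ into the definition of $a^{ij}_r$ and using that $c(\gamma) = \mathrm{col}(\gamma) - \mathrm{row}(\gamma)$ since $\lambda/\mu$ is a usual shape, the $\mathrm{row}(m^{ij}_r)$ term combines with $-\mathrm{col}(m^{ij}_r)$ to produce $-c(m^{ij}_r)$, yielding $a^{ij}_r = -c(m^{ij}_r) - \mu'_{\mathrm{col}(m^{ij}_r)} + 1$. Similarly, plugging $b_i = \lambda'_i + m - 1$ into the definition of $b^{ij}_r$, the two $\lambda'_{\mathrm{col}(M^{ij}_r)}$ terms cancel, leaving $b^{ij}_r = \mathrm{row}(M^{ij}_r) + m - 1$. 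This matches the formulas in the statement.

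There is no real obstacle in this argument — the genuinely substantive work has already been done in Proposition~\ref{prop:g-enum-ribbon} (which requires careful case analysis of the variable substitutions) and Theorem~\ref{thm:HG-1} (which uses the LGV lemma on a super lattice). The only point of care in the final verification is tracking the distinction between contents measured as cells of $\lambda/\mu$ versus shifted contents intrinsic to the sub-r-shape $\theta_i \# \theta_j$, but since the induced-flag definition already uses the positions $(\mathrm{row}, \mathrm{col})$ in $\lambda/\mu$, this distinction does not complicate the computation.
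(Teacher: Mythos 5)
Your proposal is correct and takes essentially the same route the paper intends: the paper states this corollary as an immediate consequence of Theorem~\ref{thm:HG-1} together with Proposition~\ref{prop:g-enum-ribbon}, with the $g$-specialization commuting past the determinant, and leaves the verification implicit. Your explicit check of the induced flag formulas — substituting $a_i=-i+2$ and using $c(m^{ij}_r)=\mathrm{col}(m^{ij}_r)-\mathrm{row}(m^{ij}_r)$ for the usual shape, and substituting $b_i=\lambda'_i+m-1$ so the $\lambda'$ terms cancel — is exactly the right computation and fills in the small step the paper omits.
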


\begin{example}
    \label{ex:calc-S}
    Consider the shape and its outer ribbon decomposition from Example~\ref{exm:outer-dec}. %, where all ribbons $\theta_i \# \theta_j$ were shown. 
    Let the flags be $\mathbf{a} = (1, 0, -1, -2)$, $\mathbf{b} = (6, 6, 5, 5)$.    
    Then $\mathbf{a}' = (-1, 0, 1, 1)$ $\mathbf{b}' = (3, 4, 5, 6)$ and the induced ribbon flags are the following: 
{\small
    \begin{alignat*}{4}
    \mathbf{a}^{11} &= (2,0,-1) 
    &\qquad \mathbf{b}^{11} &= (6,6,5)
    &\qquad \mathbf{a}'^{11} &= (-1,0,1) 
    &\qquad \mathbf{b}'^{11} &= (3,4,6)\\
    \mathbf{a}^{12} &= (2, 1) 
    &\qquad \mathbf{b}^{12} &= (6, 6) 
    &\qquad \mathbf{a}'^{12} &= (-1, -1) 
    &\qquad \mathbf{b}'^{12} &= (3, 4)\\
    \mathbf{a}^{13} &= (2, 0, -2) 
    &\qquad \mathbf{b}^{13} &= (6, 6, 5)
    &\qquad \mathbf{a}'^{13} &=  (-1, 0, 1) 
    &\qquad \mathbf{b}'^{13} &=  (3, 4, 6)\\
    \mathbf{a}^{21} &= (0, -1) 
    &\qquad \mathbf{b}^{21} &= (6, 5) 
    &\qquad \mathbf{a}'^{21} &= (0, 1) 
    &\qquad \mathbf{b}'^{21} &= (4, 6)\\ 
    \mathbf{a}^{22} &= (1) 
    &\qquad \mathbf{b}^{22} &= (6) 
    &\qquad \mathbf{a}'^{22} &= (-1) 
    &\qquad \mathbf{b}'^{22} &= (4)\\
    \mathbf{a}^{23} &= (0, -2) 
    &\qquad \mathbf{b}^{23} &= (6, 5)
    &\qquad \mathbf{a}'^{23} &= (0, 1) 
    &\qquad \mathbf{b}'^{23} &= (4, 6)\\ 
    \mathbf{a}^{31} &= (0, -1) 
    &\qquad \mathbf{b}^{31} &= (5, 5)
    &\qquad \mathbf{a}'^{31} &= (0, 1) 
    &\qquad \mathbf{b}'^{31} &= (5, 6)\\
    \mathbf{a}^{32} &= \varnothing 
    &\qquad \mathbf{b}^{32} &= \varnothing 
    &\qquad \mathbf{a}'^{32} &= \varnothing
    &\qquad \mathbf{b}'^{32} &= \varnothing \\
    \mathbf{a}^{33} &= (0, -2) 
    &\qquad \mathbf{b}^{33} &= (5, 5)
    &\qquad \mathbf{a}'^{33} &= (0, 1) 
    &\qquad \mathbf{b}'^{33} &=  (5, 6)
    \end{alignat*}
}
    Then we have    $$\mathsf{S}^{\mathbf{\mathbf{a},\mathbf{b}}}_{\lambda/\mu}(\mathbf{y} / \mathbf{z}) = \det \left[ \mathsf{S}^{\mathbf{a}^{ij}, 
    \mathbf{b}^{ij}}_{\theta_i \# \theta_j}(\mathbf{y} / \mathbf{z}) \right]_{1 \le i,j \le 3},$$
%\end{example}
%\begin{example}
For $m = 3$, the flags $\mathbf{a, b}$ also apply for $g$-specialization and we have:
%    Let $m = 3$, then the flags $\mathbf{a, b}$ from 
%    the previous example (example \ref{ex:calc-S}) are 
%    $g$-flags, so it just needs to be checked that: 
    \begin{align*}
        g_{\lambda/\mu}(\mathbf{x}_m;\boldsymbol{\alpha},\boldsymbol{\beta}) = 
        \det \left[ \mathsf{g}^{\mathbf{a}^{ij}, \mathbf{b}^{ij}}
        _{\theta_i \# \theta_j}        (\mathbf{x}_m;\boldsymbol{\alpha},\boldsymbol{\beta}) \right]_{1\le i,j \le 3}. 
    \end{align*}
\end{example}

\noindent{\bf Strict flags.} We can also slightly change the definition of ribbon flags, and get another Hamel--Goulden-type formulas. 
Let us define \textit{strict flags}  
$\mathbf{\tilde{a}} = (\tilde{a}_i),$ $\mathbf{\tilde{b}} = 
(\tilde{b}_i)$ as flags %of length 
%$\lambda_1$, 
where the flag 
$\mathbf{\tilde{a}}$ respects more 
strict inequality: $\tilde{a}_i - \tilde{a}_{i+1} \le 1$.
Then we define flags for the ribbons $\theta_i \# \theta_j$ as follows. 
Let $\theta_i \# \theta_j = \rho^{ij}_1\rightarrow ... \rightarrow \rho^{ij}_{k_{ij}}$ 
be canonical decomposition and let $\delta^{ij}_r$ and 
$\gamma^{ij}_r$ be the bottom and top cells of $\rho^{ij}_r$. 
Define the induced ribbon flags 
$\mathbf{\tilde{a}}^{ij}=(\tilde{a}^{ij}_r)_r, \mathbf{\tilde{b}}^{ij}=(\tilde{b}^{ij}_r)_r$: 
\begin{align*}
    \tilde{a}^{ij}_r = \tilde{a}_{\mathrm{col}(\min(\gamma^{ij}_r))}
    \qquad 
    \tilde{b}^{ij}_r = \tilde{b}_{\mathrm{col}(\max(\delta^{ij}_r))}-
    \lambda'_{\mathrm{col}(\max(\delta^{ij}_r))} + \mathrm{row}(\max(\delta^{ij}_r))
\end{align*}
%We call them \textit{ribbon flags of type II}. %induced by the flags $\mathbf{a, b}$.
It can be showed that $\mathbf{\tilde{a}}^{ij}, \mathbf{\tilde{b}}^{ij}$ are 
indeed flags. 
We can also similarly prove that the same Hamel--Goulden-type formulas hold for these strict flags. 

%We call them 
%\textit{ribbon flags of type II}, and we call the usual ribbon flags 
%$\mathbf{a}^{ij}, \mathbf{b}^{ij}$ as \textit{ribbon flags of type I}

Now we state a slightly different version of Hamel--Goulden-type formulas, which can be proved similarly as for the above flags. 
\begin{theorem}[Hamel--Goulden formulas with strict flags]
    Let $(\theta_1, \ldots, \theta_k)$ be an outer ribbon decomposition of  
    $\lambda/\mu$. Then the following determinantal formulas hold with strict flags $\mathbf{\tilde{a}},\mathbf{\tilde{b}}$: 
$$
\mathsf{S}_{\lambda/\mu}^{\mathbf{\tilde{a}},\mathbf{\tilde{b}}}(\mathbf{y} / \mathbf{z}) = 
\det \left[\mathsf{S}^{\mathbf{\tilde{a}}^{ij}, \mathbf{\tilde{b}}^{ij}}_{\theta_i\# \theta_j}
(\mathbf{y} / \mathbf{z})\right]_{1 \le i,j \le k}.
$$
\end{theorem}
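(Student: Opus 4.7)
The plan is to mimic the proof of Theorem~\ref{thm:HG-1} with a strict-flag analogue of the super lattice construction. The overall strategy is to apply the Lindström–Gessel–Viennot lemma to a weighted lattice whose paths encode columns of super tableaux, then verify that signed sums over nonintersecting path systems biject weight-preservingly with flagged super tableaux under the strict flag conventions.

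First I would establish the tableau analogue of Theorem~\ref{thm:signed-expansion} in this setting: showing that $\mathsf{S}^{\mathbf{\tilde{a}},\mathbf{\tilde{b}}}_{\lambda/\mu}(\mathbf{y}/\mathbf{z})$ expands over super tableaux whose unprimed entries satisfy $\tilde{a}_j \le T_{i,j} \le \tilde{b}_j$ (reflecting the stricter inequality $\tilde{a}_i - \tilde{a}_{i+1} \le 1$). The passage from the determinant to this tableau model is parallel to the proof of Theorem~\ref{thm:signed-expansion}, but the shifted index ranges in each $e_n$ entry are chosen so that the stricter hypothesis on $\mathbf{\tilde{a}}$ encodes column-admissibility rather than the $\mu'$-shifted admissibility used in the non-strict case.

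Next I would check that $\mathbf{\tilde{a}}^{ij},\mathbf{\tilde{b}}^{ij}$ are genuinely strict/column flags for $\theta_i\#\theta_j$: the formula for $\tilde{b}^{ij}_r$ is identical to the non-strict induced flag, so the verification carries over verbatim from the proposition preceding Theorem~\ref{thm:HG-1}. For $\tilde{a}^{ij}_r$ the simpler shift $\tilde{a}_{\mathrm{col}(\min(\gamma^{ij}_r))}$ (with no $\mu'$ correction) is exactly the right one because the strictness hypothesis on $\mathbf{\tilde{a}}$ lets one bound successive values by the column-index difference of the top cells of adjacent vertical strips in the canonical decomposition, without any extra $\mu'$ term. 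This is a short computation analogous to the one done for the non-strict induced flags.

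The crux is then the LGV step: construct a super lattice adapted to the strict-flag setting, show that admissible columns of super tableaux correspond to lattice paths whose source and sink vertices are encoded by $\tilde{a}^{ij}_r,\tilde{b}^{ij}_r$, and conclude by sign cancellation on intersecting path systems. The main obstacle I expect is the endpoint bookkeeping: in the strict setting the source vertex is read off the top cell of each vertical strip (via $\min(\gamma^{ij}_r)$) rather than the cell $m^{ij}_r$ used in the non-strict proof, so one must verify carefully that the path-to-tableau bijection still matches the weight $wt(T)$ exactly and that the source vertices of the nonintersecting systems precisely produce the induced strict flags. Once this identification is in place, the Lindström–Gessel–Viennot sign cancellation and the matching of weights proceed identically to Theorem~\ref{thm:HG-1}, and no further new ideas are needed.
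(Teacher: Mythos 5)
The paper offers no explicit proof of this theorem: it states only that the result ``can be proved similarly'' to Theorem~\ref{thm:HG-1}, so your proposal is in the spirit of what the authors intend. Two points in your write-up need correction or clarification.

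First, step (1) is unnecessary as stated. Theorem~\ref{thm:signed-expansion} already applies verbatim to strict flags, since a strict flag $(\tilde{a}_i - \tilde{a}_{i+1} \le 1)$ is in particular a column flag; the left-hand side $\mathsf{S}^{\mathbf{\tilde{a}},\mathbf{\tilde{b}}}_{\lambda/\mu}$ has exactly the tableau expansion already given. The $\mu'$-shift you mention does not enter the tableau formula at all; it enters only in the definition of the \emph{induced ribbon flags}, so describing the new $e_n$ index ranges as encoding a different admissibility conflates these two layers. Second, note that $\min(\gamma^{ij}_r)$ in the strict definition is exactly the cell $m^{ij}_r$ of the non-strict one, so the ``different source cell'' you describe is a misreading: the genuine difference between $\mathbf{a}^{ij}$ and $\mathbf{\tilde{a}}^{ij}$ is solely the dropped correction term $\mathrm{row}(m^{ij}_r) - \mu'_{\mathrm{col}(m^{ij}_r)} - 1 \ge 0$.

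Your identification of the crux is nevertheless right, but your last sentence (``the LGV sign cancellation and matching of weights proceed identically \ldots and no further new ideas are needed'') is where the real work is being glossed over. Because $\tilde{a}^{ij}_r \le a^{ij}_r$, the strict induced flags are genuinely \emph{weaker}, so the matrix entries $\mathsf{S}^{\mathbf{\tilde a}^{ij},\mathbf{\tilde b}^{ij}}_{\theta_i\#\theta_j}$ enumerate \emph{more} ribbon tableaux (equivalently, more paths) than the corresponding entries from Theorem~\ref{thm:HG-1}; the two determinants agree, but the two matrices do not agree entrywise. To carry out the LGV step one must build a modified lattice with the uncorrected lower bound $\hat a_i = \tilde a_{\mathrm{col}(m_i)}$, check that paths $C_i \to D_j$ on the enlarged lattice biject with $ST_{\theta_i\#\theta_j}(\mathbf{\tilde a}^{ij},\mathbf{\tilde b}^{ij})$, and, crucially, verify that \emph{non-intersecting} path systems on this enlarged lattice still biject exactly with flagged super tableaux of $\lambda/\mu$ with flags $\mathbf{\tilde a},\mathbf{\tilde b}$ --- i.e.\ that the extra edges allowed by dropping the correction never appear in a non-intersecting system. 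This is exactly where the strictness hypothesis $\tilde a_i - \tilde a_{i+1}\le 1$ must be used, and it is an analogue of Lemmas~\ref{lem:path-tableau-1}--\ref{thm:bijection-super-tableau-non-intersecting-paths} rather than an ``identical'' repetition of them. Your proposal would be complete if it named this verification explicitly; as written it reads as though the cancellation is automatic, which understates the step.
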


\begin{remark}
One can notice that the induced ribbon flags $\mathbf{\tilde{a}}^{ij}, \mathbf{\tilde{b}}^{ij}$ need 
not to be strict flags, and so the latter formula is not `recursive', i.e. 
one can not use it again to compute elements of the determinant, whereas the previously stated versions of Hamel--Goulden formulas  can be used `recursively' in that sense. 
Furthermore, the previous version is more general, as its flag conditions are weaker. We note however that the Hamel--Goulden-type formulas with strict flags give more compactly written formulas for the functions $g_{\lambda/\mu}(\mathbf{x}_m;\boldsymbol{\alpha}, \boldsymbol{\beta})$.     
\end{remark}

\section{Proofs}\label{sec:proofs}
\subsection{Summary of proofs.} 
% Main target is to prove Hamel-Goulden formulas for $S...$. We prove it using LGV lemma. To use this lemma we define lattice graph, which we call {\it super lattice} and consider paths on the lattice. We establish bijection between non-intersecting systems on the super lattice and super tableaux of given shape and column flags. Next we decompose the shape into vertical ribbons and show that, in this particular decomposition, enumerators for paths can be written in terms of elementary supersymmetric functions, then using LGV lemma we prove the theorem~\ref{thm:signed-expansion}. We also prove that each path on the super lattice corresponds to some super tableau of ribbon shape with induced flags, using this and LGV lemma we prove Hamel-Goulden formulas.

In \textsection\,\ref{subsec:lattice} we define a lattice graph, which we call $\mathbb{Z}$-lattice, using the given r-shape $\lambda/\mu$, column flags $\mathbf{a}, \mathbf{b}$ and outer decomposition $\Theta = (\theta_1,\ldots, \theta_l)$. We consider paths on the lattice with defined starting nodes $\mathbf{C} = (C_1, \ldots,C_k)$ and ending nodes $\mathbf{D} = (D_1,\ldots, D_k)$. 

In \textsection\,\ref{subsec:bijection} we establish a bijection between non-intersecting systems of paths on the $\mathbb{Z}$-lattice and $\mathbb{Z}$-SSYT of the given r-shape $\lambda/\mu$ and flags $\mathbf{a}, \mathbf{b}$ (Lemma~\ref{thm:bijection-super-tableau-non-intersecting-paths}). We first obtain a bijection between paths from $C_i$ to $D_j$ and flagged $\mathbb{Z}$-SSYT of r-shape $\theta_i \# \theta_j$ with induced ribbon flags $\mathbf{a}^{ij}, \mathbf{b}^{ij}$ (Lemma~\ref{thm:bijection-path-ribbon}). In particular, each path $P_i$ corresponds to some $\mathbb{Z}$-SSYT of shape $\theta_i \# \theta_i = \theta_i$ for all $i$, and thus the system of paths $\mathbf{P} = (P_1,\ldots, P_k)$ corresponds to some tableau $T$ of shape $\lambda/\mu$ (by assembling all tableaux $T_{\theta_1}, \ldots, T_{\theta_k}$, where $T_{\theta_i}$ is a tableau of shape $\theta_i$ corresponding to path $P_i$ for all $i$). Next we use auxiliary lemmas~\ref{lem:path-tableau-1}, \ref{lem:head-dir}, \ref{lem:no-wrong-cells} to prove Lemma~\ref{thm:bijection-super-tableau-non-intersecting-paths}, i.e. to prove that $T$ is a $\mathbb{Z}$-SSYT with flags $\mathbf{a}, \mathbf{b}$ if and only if the paths in $\mathbf{P}$ are non-intersecting. Then using Lemma~\ref{lem:restrictions-for-heads-and-tails-ribbons} we show that the system of paths is non-intersecting if and only if each path $P_i$ in the system is a path from $C_i$ to $D_i$ (Lemma~\ref{lemma:bad-paths-intersect}), which will be neccessary to further apply the LGV lemma. 

In \textsection\,\ref{subsec:enumerator} we prove theorems~\ref{thm:signed-expansion} and \ref{thm:HG-1}. We first define weighted lattice induced by the $\mathbb{Z}$-lattice, which we call {\it super lattice}. The starting and ending nodes in the supper lattice are the same as in the $\mathbb{Z}$-lattice. We first establish a weight preserving bijection between paths from $C_i$ to $D_j$ on the super lattice and super tableaux %induced by $\mathbb{Z}$-tableaux 
of r-shape $\theta_i \# \theta_j$ and flags $\mathbf{a}^{ij}, \mathbf{b}^{ij}$ (Lemma~\ref{lem:bt-path-bfr}). Next we establish a weight preserving bijection between systems of paths on the super lattice and super tableaux %induced by the $\mathbb{Z}$-SSYTs 
of the given r-shape and flags (Lemma~\ref{lem:bijection-bt-bl}). The results from \textsection\,\ref{subsec:bijection} significantly simplify the proofs of lemmas~\ref{lem:bt-path-bfr}, \ref{lem:bijection-bt-bl} (which is the reason of introducing them first). Next we consider specific ribbon decomposition, %$\Theta = (\theta_i)_i^{\lambda_1}$, 
where each $\theta_i$ is a vertical ribbon, and we show that the enumerator of the paths from $C_i$ to $D_j$ can be written in terms of elementary supersymmetric functions, which in combination with Lemma~\ref{lem:bijection-bt-bl} and the LGV lemma proves Theorem~\ref{thm:signed-expansion}. Next, using lemmas~\ref{lem:bt-path-bfr}, \ref{lem:bijection-bt-bl}, Theorem~\ref{thm:signed-expansion} in combination with the LGV lemma we prove the main Hamel-Goulden formulas from Theorem~\ref{thm:HG-1}. 

% Next we establish bijection between non-intersecting systems of paths on the lattice and $\mathbb{Z}$-SSYTs of shape $\lambda/\mu$ and flags $\mathbf{a}, \mathbf{b}$. More precisely, we prove that each path correspond to some flagged $\mathbb{Z}$-SSYT of shape $\theta_i \# \theta_j$, with flags $\mathbf{a}^{ij}, \mathbf{b}^{ij}$, for some $i,j$. Assembling all the tableaux of shapes $\theta_i \# \theta_i = \theta_i$, for all $i$, we obtain a tableau $T$ of shape $\lambda/\mu$, and we prove that $T$ is $\mathbb{Z}$-SSYT with flags $\mathbf{a}, \mathbf{b}$ if and only if the system of paths is non-intersecting. 

\subsection{$\mathbb{Z}$-lattice}
\label{subsec:lattice}
In this subsection we construct a lattice graph from the r-shape $\lambda/\mu = (\lambda/\mu,\ r)$, its outer decomposition $\Theta$ (more 
precisely, we use the corresponding pipe vector $\pi$) 
and flags $\mathbf{a}, \mathbf{b}$. For a pipe vector $\pi = (\pi_1, \ldots, \pi_m)$, let us denote $\pi(i) := \pi_{i - r + 1}$. We define the lattice graph by showing all of its directed edges. Let $\Delta = r - 1 + \lambda'_1$. For a content $c \in \mathbb{Z}$, define the value 
\[
f_c := 
\begin{cases}
    0,\quad &\text{if}\quad c=\Delta, \\
    |\{\pi(i)\ |\ \Delta < i \le c,\ \pi(i) = (1, \cdot)\}|, \quad &\text{if}\quad c>\Delta, \\
    -|\{\pi(i)\ |\ c\le i < \Delta,\ \pi(i) = (\cdot, 1)\}|, \quad &\text{if}\quad c<\Delta.
\end{cases}
\]

Let $c \in \mathbb{Z}$ be some content in $\lambda/\mu$ and $m_c := \min(c)$, $M_c := \max(c)$, where $\min(c), \max(c)$ denote the leftmost and rightmost cells (resp.) with the content $c$ in the given diagram, 
and let
\begin{align*}
	\overline{a}_i := a_{\mathrm{col}(m_i)} - (\mu'_{\mathrm{col}(m_i)} + 1) + \mathrm{row}(m_i), \qquad 
	\overline{b}_i := b_{\mathrm{col}(M_i)}-\lambda'_{\mathrm{col}(M_i)}+\mathrm{row}(M_i).
\end{align*}

We now construct the lattice graph. 
We call some edges of the lattice \textit{horizontal} and others \textit{vertical}. 
%Define set of directed edges $E$ on some nodes of $\mathbb{Z}^2$ in the following way: 
\begin{definition}
	%\textcolor{orange}{CHANGES} 
    Given r-shape $\lambda/\mu = (\lambda/\mu,\ r)$, pipe vector $\pi$ and flags $\mathbf{a}, \mathbf{b}$, we define the \textit{flagged $\mathbb{Z}$-lattice} $ZL_{\lambda/\mu}(\pi, \mathbf{a}, \mathbf{b})$ as a directed graph with some nodes of $\mathbb{Z}^2$ and the set $E$ of directed edges given as follows.
    Let $m = \lambda_1 + \lambda'_1 - 1$. For all $i\in [r,\ r + m - 1]$, the following edges are in $E$: 
\begin{enumerate}
    \item $((i,j)\rightarrow (i+1,j))$ for 
    $j\in [\overline{a}_i+f_i,\ \overline{b}_i+f_i]$, horizontal  
    \item $((i,j)\rightarrow (i,j+1))$ for $j\in [\overline{a}_i+f_i,\ \overline{b}_i-1+f_i]$  
    if $\pi(i) = (0, \cdot)$, vertical 
    \item $((i,j)\rightarrow (i,j-1))$ for $j\in [\overline{a}_i+1+f_i,\ \overline{b}_i+f_i]$ 
    if $\pi(i) = (1, \cdot)$, vertcal  
\end{enumerate}
In addition, we construct the rightmost vertical edges, let $i=r+m-1$:
\begin{enumerate}
    \item $((r + m, j)\rightarrow (r + m, j+1))$ 
    for $j\in [\overline{a}_i+f_i,\ \overline{b}_i-1+f_i]$ 
    if $\pi(i) = (0, \cdot),$
    \item $((r + m, j)\rightarrow (r + m, j-1))$ for 
    $j\in [\overline{a}_i+1+f_i,\ \overline{b}_i+f_i]$
    if $\pi(i) = (1, \cdot)$.
\end{enumerate} 
\end{definition}

This lattice graph can be divided into regions of two types: \textit{$e$-region} and \textit{$h$-region}. A node $v$ with coordinates $(x, y)$ is in $e$-region if $\pi(x) = (0, \cdot)$, and its is in $h$-region if $\pi(x) = (1, \cdot)$.

To consider paths on the lattices above we define positions of starting points 
$\mathbf{C}=(C_1,...,C_k)$ and ending points $\mathbf{D}=(D_1,...,D_k)$, where $k$
is the number of ribbons in the given outer ribbon decomposition $\Theta=(\theta_1,...,\theta_k)$.
Let $\delta_i$ be the head and $\gamma_i$ be the tail of the ribbon $\theta_i$, and 
let $v_i,w_i$ be the pipes assigned to cells $\delta_i,\gamma_i$, respectively.
The points $C_i$ and $D_i$ are defined by the ribbon $\theta_i$ in the following way:
\begin{align*}
\begin{cases}
	C_i=(c(\delta_i), \overline{a}_{c(\delta_i)}+f_{c(\delta_i)}) \quad &\text{ if } v_i = (0, \cdot) \\
	C_i=(c(\delta_i), \overline{b}_{c(\delta_i)}+f_{c(\delta_i)}) \quad &\text{ if } v_i = (1, \cdot) \\
	D_i=(c(\gamma_i)+1, \overline{b}_{c(\gamma_i)}+f_{c(\gamma_i)}) \quad &\text{ if } w_i = (\cdot, 0) \\
	D_i=(c(\gamma_i)+1, \overline{a}_{c(\gamma_i)}+f_{c(\gamma_i)}) \quad &\text{ if } w_i = (\cdot, 1)
\end{cases}
\end{align*}

Let $P_i$ be a path from $C_i$ to $D_i$. We consider 
systems of paths $\mathbf{P} = (P_1,...,P_k)$. We say that the system 
$\mathbf{P}$ is \textit{non-intersecting} if  
$P_i,P_j$ are vertex disjoint for all $i \ne j$. It turns out that 
if the system $\mathbf{P}$ is non-intersecting, 
then it corresponds to some 
flagged $\mathbb{Z}$-SSYT $T$ from $ZT_{\lambda/\mu}(\mathbf{a}, \mathbf{b})$. 

Let us denote  by $ZP_{\lambda/\mu}(\pi, \mathbf{a}, \mathbf{b})$ the set non-intersecting path systems 
$\mathbf{P} = (P_1, \ldots, P_k)$, where $P_i$ is a path from $C_i$ to 
$D_i$.

%Now, we give an example of the $ZL_{\lambda/\mu}(\pi, \mathbf{a}, \mathbf{b})$.
\begin{example}
	\label{ex:z-tableau-and-lattice}
    Let $\lambda=(5,5,4,4,2)$, $\mu = (2,1)$, with flags $\mathbf{a} = (1, 0, -1, -2, -3)$ and $\mathbf{b} = (11, 11, 10, 10, 8)$, and  
    the pipe vector: 
    $$\pi=((0,0), (0,1), (1,1), (1,0), (0,1), (1,1), (1,1), (1,1), (1,1)).$$
    The corresponding outer decomposition, example of a flagged $\mathbb{Z}$-SSYT,  
    flagged $\mathbb{Z}$-lattice and system of paths for the tableau are shown in Figures 
    \ref{pic:young_ribbon}, \ref{fig:ex-z-tableau} 
    and \ref{fig:ex-ribbon-lattice}.
\end{example}

\begin{figure}
    \centering
	\begin{subfigure}[b]{0.5\textwidth}
	\centering
		\begin{tikzpicture}[y=-1cm, scale=0.57]
		\draw (3, 1) rectangle (4, 2);
		\draw [red, very thick] (3.5, 2) -- (3.5, 1.5);
		\draw [red, very thick] (3.5, 1.5) -- (3.5, 1);
		\draw (4, 1) rectangle (5, 2);
		\draw [red, very thick] (4.5, 2) -- (4.5, 1.5);
		\draw [red, very thick] (4.5, 1.5) -- (4.5, 1);
		\draw (5, 1) rectangle (6, 2);
		\draw [red, very thick] (5.5, 2) -- (5.5, 1.5);
		\draw [red, very thick] (5.5, 1.5) -- (5.5, 1);
		\draw (2, 2) rectangle (3, 3);
		\draw [red, very thick] (2, 2.5) -- (2.5, 2.5);
		\draw [red, very thick] (2.5, 2.5) -- (2.5, 2);
		\draw (3, 2) rectangle (4, 3);
		\draw [red, very thick] (3.5, 3) -- (3.5, 2.5);
		\draw [red, very thick] (3.5, 2.5) -- (3.5, 2);
		\draw (4, 2) rectangle (5, 3);
		\draw [red, very thick] (4.5, 3) -- (4.5, 2.5);
		\draw [red, very thick] (4.5, 2.5) -- (4.5, 2);
		\draw (5, 2) rectangle (6, 3);
		\draw [red, very thick] (5.5, 3) -- (5.5, 2.5);
		\draw [red, very thick] (5.5, 2.5) -- (5.5, 2);
		\draw (1, 3) rectangle (2, 4);
		\draw [red, very thick] (1.5, 4) -- (1.5, 3.5);
		\draw [red, very thick] (1.5, 3.5) -- (1.5, 3);
		\draw (2, 3) rectangle (3, 4);
		\draw [red, very thick] (2.5, 4) -- (2.5, 3.5);
		\draw [red, very thick] (2.5, 3.5) -- (3, 3.5);
		\draw (3, 3) rectangle (4, 4);
		\draw [red, very thick] (3, 3.5) -- (3.5, 3.5);
		\draw [red, very thick] (3.5, 3.5) -- (3.5, 3);
		\draw (4, 3) rectangle (5, 4);
		\draw [red, very thick] (4.5, 4) -- (4.5, 3.5);
		\draw [red, very thick] (4.5, 3.5) -- (4.5, 3);
		\draw (1, 4) rectangle (2, 5);
		\draw [red, very thick] (1, 4.5) -- (1.5, 4.5);
		\draw [red, very thick] (1.5, 4.5) -- (1.5, 4);
		\draw (2, 4) rectangle (3, 5);
		\draw [red, very thick] (2.5, 5) -- (2.5, 4.5);
		\draw [red, very thick] (2.5, 4.5) -- (2.5, 4);
		\draw (3, 4) rectangle (4, 5);
		\draw [red, very thick] (3.5, 5) -- (3.5, 4.5);
		\draw [red, very thick] (3.5, 4.5) -- (4, 4.5);
		\draw (4, 4) rectangle (5, 5);
		\draw [red, very thick] (4, 4.5) -- (4.5, 4.5);
		\draw [red, very thick] (4.5, 4.5) -- (4.5, 4);
		\draw (1, 5) rectangle (2, 6);
		\draw [red, very thick] (1, 5.5) -- (1.5, 5.5);
		\draw [red, very thick] (1.5, 5.5) -- (2, 5.5);
		\draw (2, 5) rectangle (3, 6);
		\draw [red, very thick] (2, 5.5) -- (2.5, 5.5);
		\draw [red, very thick] (2.5, 5.5) -- (2.5, 5);
		\end{tikzpicture}
	\subcaption{Ribbon decomposition}
	\label{pic:young_ribbon}
	\end{subfigure}
	\begin{subfigure}[b]{0.45\textwidth}
        \centering
		\resizebox{3cm}{!}{
    	\begin{ytableau}
			\none & \none & *(red!70) -1 & *(green!70) -1 & *(purple!70) 2 \\
			\none & *(orange!70) 0 & *(red!70) 0& *(green!70) 2 & *(purple!70) 3 \\
			*(blue!70) 1 & *(red!70) 3 & *(red!70) 3 & *(green!70) 4 \\
			*(blue!70) 2 & *(red!70) 4& *(green!70) 5 & *(green!70) 7\\
			*(red!70) 4 & *(red!70) 6
    	\end{ytableau}
	}
    \subcaption{Example of a flagged $\mathbb{Z}$-tableau}
    \label{fig:ex-z-tableau}
    \end{subfigure}
	\caption{}
\end{figure}

\begin{figure}
    \centering
    \begin{tikzpicture}[scale=1.25]
        % Loop over x and y to draw rectangles
        \foreach \x in {-4,...,4} {
        \foreach \y in {-1,...,11} {
            \draw[dotted, thin] (\x,\y) rectangle ++(1,1);
        }
        }
        \foreach \x in {-1  ,...,12} {
            \fill[black] (-4.5,\x) circle (0pt) node[left] {$\x$};
        }
        \fill[blue, opacity=0.05] (-2,-1) rectangle (0, 12);
        \fill[blue, opacity=0.05] (1,-1) rectangle (5, 12);
        \foreach \x in {1,...,9} {
        \fill[black] (-4,\x) circle (1.5pt) node[left] {};
        }
        \foreach \x in {0,...,9} {
            \fill[black] (-3,\x) circle (1.5pt) node[left] {};
        }
        \foreach \x in {0,...,9} {
            \fill[black] (-2,\x) circle (1.5pt) node[left] {};
        }
        \foreach \x in {0,...,10} {
            \fill[black] (-1,\x) circle (1.5pt) node[left] {};
        }
        \foreach \x in {0,...,10} {
            \fill[black] (0,\x) circle (1.5pt) node[left] {};
        }
        \foreach \x in {0,...,10} {
            \fill[black] (1,\x) circle (1.5pt) node[left] {};
        }
        \foreach \x in {1,...,10} {
            \fill[black] (2,\x) circle (1.5pt) node[left] {};
        }
        \foreach \x in {1,...,11} {
            \fill[black] (3,\x) circle (1.5pt) node[left] {};
        }
        \foreach \x in {1,...,11} {
            \fill[black] (4,\x) circle (1.5pt) node[left] {};
        }
        \foreach \x in {1,...,11} {
            \fill[black] (5,\x) circle (1.5pt) node[left] {};
        }
        \foreach \x in {-4,...,5} {
            \fill[black] (\x, -2) circle (0pt) node[above] {$\x$};
        }
        \fill[red] (-4, 1) circle (2pt) node[below right] {$C_1$};
        \fill[blue] (-3, 0   ) circle (2pt) node[below right] {$C_2$};
        \fill[green!50!black] (-1, 10) circle (2pt) node[below right] {$C_3$};
        \fill[orange] (0, 0) circle (2pt) node[below right] {$C_4$};
        \fill[purple] (3, 11) circle (2pt) node[below right] {$C_5$};
        \fill[red] (3, 1) circle (2pt) node[below right] {$D_1$};
        \fill[blue] (-1, 0) circle (2pt) node[below right] {$D_2$};
        \fill[green!50!black] (4, 1) circle (2pt) node[below right] {$D_3$};
        \fill[orange] (1, 0) circle (2pt) node[below right] {$D_4$};
        \fill[purple] (5, 1) circle (2pt) node[below right] {$D_5$};
        % \draw[line width=0.7mm, blue, ->] (-3, -1) -- (-3, 0);
        \draw[line width=0.7mm, blue, -->-] (-3, 0) -- node[above]{$2$} (-2, 0);
        \draw[line width=0.7mm, blue, -->-] (-2, 0) -- node[above]{$1$} (-1, 0);
        \draw[line width=0.7mm, red, -->-] (0, 3) -- node[above]{$3$} (1,3);
        \draw[line width=0.7mm, red, -->-] (1, 1) -- node[above]{$0$} (2,1);
        \draw[line width=0.7mm, red, -->-] (2, 1) -- node[above]{$-1$} (3,1);
        \draw[line width=0.7mm, red, -->-] (1,3) -- (1,1);
        \draw[line width=0.7mm, red, -->-] (-1, 3) -- node[above]{$3$} (0,3);
        \draw[line width=0.7mm, red, -->-] (-2, 3) -- node[above]{$4$} (-1,3);
        \draw[line width=0.7mm, red, -->-] (-3, 4) -- node[above]{$6$} (-2,4);
        \draw[line width=0.7mm, red, -->-] (-2, 4) -- (-2,3);
        \draw[line width=0.7mm, red, -->-] (-3, 2) -- (-3,4);
        \draw[line width=0.7mm, red, -->-] (-4, 1) -- (-4,2);
        \draw[line width=0.7mm, red, -->-] (-4, 2) -- node[above]{$4$} (-3,2);

        \draw[line width=0.7mm, green!50!black, -->-] (-1, 10) -- (-1,5);
        \draw[line width=0.7mm, green!50!black, -->-] (-1, 5) -- node[above]{$5$} (0,5);
        \draw[line width=0.7mm, green!50!black, -->-] (0, 5) -- (0,7);
        \draw[line width=0.7mm, green!50!black, -->-] (0,7) -- node[above]{$7$} (1,7);
        \draw[line width=0.7mm, green!50!black, -->-] (1,7) -- (1,5);
        \draw[line width=0.7mm, green!50!black, -->-] (1,5) -- node[above]{$4$} (2,5);
        \draw[line width=0.7mm, green!50!black, -->-] (2,5) -- (2,4);
        \draw[line width=0.7mm, green!50!black, -->-] (2,4) -- node[above]{$2$} (3,4);
        \draw[line width=0.7mm, green!50!black, -->-] (3,4) -- (3,2);
        \draw[line width=0.7mm, green!50!black, -->-] (3,2) -- node[above]{$-1$} (4,2);
        \draw[line width=0.7mm, green!50!black, -->-] (4,2) -- (4,1);

        \draw[line width=0.7mm, purple, -->-] (3,11) -- (3,6);
        \draw[line width=0.7mm, purple, -->-] (3,6) -- node[above]{$3$} (4,6);
        \draw[line width=0.7mm, purple, -->-] (4,6) -- node[above]{$2$} (5,6);
        \draw[line width=0.7mm, purple, -->-] (5,6) -- (5,1);

        \draw[line width=0.8mm, orange, -->-] (0,0) -- node[above]{$0$} (1,0);
    \end{tikzpicture}
\caption{$\mathbb{Z}$-lattice. The $e$-region is indicated by light-blue color, and $h$-region is indicated by white color. Path of some color corresponds to a filling of a ribbon with the same color in Fig.~\ref{fig:ex-z-tableau}.}
\label{fig:ex-ribbon-lattice}
\end{figure}
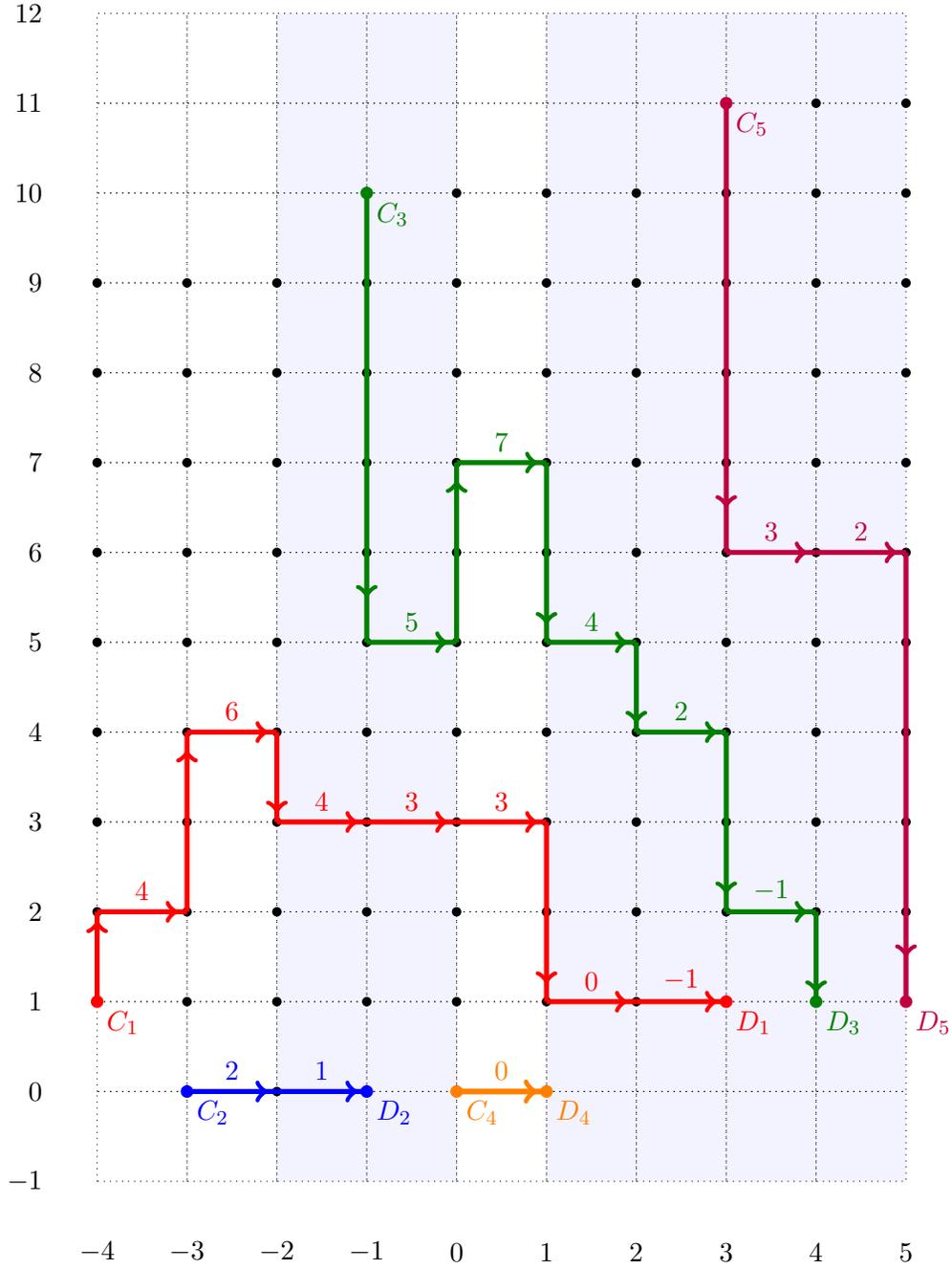

\subsection{Bijection}
\label{subsec:bijection}
In this subsection we establish a bijection 
between the sets $ZT_{\lambda/\mu}(\mathbf{a}, \mathbf{b})$ and 
$ZP_{\lambda/\mu}(\pi, \mathbf{a}, \mathbf{b})$.
Let $P(C_i, D_j)$ be the set of all paths that starts from $C_i$ and 
end on $D_j$ in $ZL_{\lambda/\mu}(\pi, \mathbf{a}, \mathbf{b})$. 
Let $ZT_{\lambda/\mu}$ be the set of all tableaux (not necessarily 
semistandard) of r-shape $\lambda/\mu$ filled with integers.
Suppose that $P(C_i, D_j)$ is nonempty and define the map
\[
    \phi_{i,j} : P(C_i, D_j) \rightarrow 
    ZT_{\theta_i\# \theta_j}
\]
as follows: let $P \in P(C_i, D_j)$ and $\gamma$ be a cell in $\theta_i\# \theta_j$, suppose that $P$ contains horizontal edge $e = (c(\gamma), w + f_{c(\gamma)})\rightarrow (c(\gamma)+1, w + f_{c(\gamma)})$ for some $w$, then write $w$ in the cell $\gamma$.
We fill every cell of the $\theta_i\# \theta_j$ by the described way and define $\phi_{i,j}(P)$ for the resulting filling. 

%\begin{figure}
%    \centering
%    \label{fig:ytab-1}
%    \begin{subfigure}[b]{0.4\textwidth}
%        \centering
%        \begin{ytableau}
%            a \\
%            b
%        \end{ytableau}
%    \subcaption{}
%    \label{fig:ytab-1a}
%    \end{subfigure}
%    \begin{subfigure}[b]{0.4\textwidth}
%    \centering
%    \begin{ytableau}
%            b & a \\
%        \end{ytableau}
%    \subcaption{}
%    \label{fig:ytab-1b}
%    \end{subfigure} \\[1cm]
%    \caption{}
%\end{figure}

\begin{lemma}
    \label{thm:bijection-path-ribbon} 
    The map 
    $$\phi_{i,j} : P(C_i,D_j)\rightarrow 
    ZT_{\theta_i \# \theta_j}(\mathbf{a}^{ij}, \mathbf{b}^{ij})$$ 
    is a bijection.
\end{lemma}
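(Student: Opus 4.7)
The plan is to show well-definedness of $\phi_{ij}$, check that its image lies in $ZT_{\theta_i \# \theta_j}(\mathbf{a}^{ij}, \mathbf{b}^{ij})$, and construct an inverse. First, any directed path $P$ from $C_i$ to $D_j$ moves from column $c(\delta_i)$ to column $c(\gamma_j) + 1$ and must use exactly one horizontal edge at each content $c \in [c(\delta_i), c(\gamma_j)]$, since horizontal edges are the only ones that change the $x$-coordinate. Hence $\phi_{ij}(P)$ is a well-defined filling of $\theta_i \# \theta_j$: the cell with content $c$ receives the value $w$ read off from the unique horizontal edge $(c, w + f_c) \to (c+1, w + f_c)$ in $P$.

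Next, I would verify that $T = \phi_{ij}(P)$ is semistandard and satisfies the induced ribbon flags. Given two cells $\gamma_1, \gamma_2$ of consecutive contents $c-1, c$ in $\theta_i \# \theta_j$, let $y_{c-1} = T_{\gamma_1} + f_{c-1}$ and $y_c = T_{\gamma_2} + f_c$ be the $y$-coordinates of the associated horizontal edges. Since these are joined by the unique monotone traversal of column $c$ in $P$, we have $y_c \ge y_{c-1}$ if $\pi(c)^1 = 0$ and $y_c \le y_{c-1}$ if $\pi(c)^1 = 1$. Combined with $f_c - f_{c-1} = 0$ in the former case and $f_c - f_{c-1} = 1$ in the latter, and the fact that $\gamma_1, \gamma_2$ lie in the same row of the cutting strip precisely when $\pi(c)^1 = 0$ and in the same column precisely when $\pi(c)^1 = 1$, this translates exactly into row weak-increase ($T_{\gamma_2} \ge T_{\gamma_1}$) and column strict-increase ($T_{\gamma_1} \ge T_{\gamma_2} + 1$ with $\gamma_2$ above $\gamma_1$). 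The flag bounds $T_\gamma \in [\overline{a}_{c(\gamma)}, \overline{b}_{c(\gamma)}]$ follow from the lattice's $y$-range at each column; together with column strict-increase within each vertical strip $\rho^{ij}_r$ and the identifications $a^{ij}_r = \overline{a}_{c(m^{ij}_r)}$, $b^{ij}_r = \overline{b}_{c(M^{ij}_r)}$, this yields the induced ribbon flag condition.

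For the inverse, given $T \in ZT_{\theta_i \# \theta_j}(\mathbf{a}^{ij}, \mathbf{b}^{ij})$ I would reconstruct $P$ by placing horizontal edges at $y$-values $T_\gamma + f_c$ for each cell $\gamma$ of content $c$, connected by the unique monotone vertical segments in each intermediate column (going up in $e$-regions, down in $h$-regions). Semistandardness of $T$ guarantees that these vertical segments respect the lattice edge directions, while the flag bounds ensure the $y$-values stay within the admissible range $[\overline{a}_c + f_c, \overline{b}_c + f_c]$ at each column. A short case analysis on the pipe directions $v_i, w_j$ at the head $\delta_i$ and tail $\gamma_j$ then verifies that the resulting path starts at $C_i$ and ends at $D_j$; this map is evidently inverse to $\phi_{ij}$.

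The main obstacle will be the careful bookkeeping of the content shifts $f_c$, the pipe directions $\pi(c)$, and the identifications of $\overline{a}_c, \overline{b}_c$ with the induced ribbon flags, especially at the boundaries of vertical strips $\rho^{ij}_r$ where the lattice region switches between $e$-type and $h$-type, and at the head and tail cells where the start and end points $C_i, D_j$ are determined by the pipe directions $v_i, w_j$.
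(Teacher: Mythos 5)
Your proposal is correct and follows essentially the same strategy as the paper's proof: showing each content in $[c(\delta_i), c(\gamma_j)]$ is crossed by exactly one horizontal edge, translating monotone vertical traversal of each column (controlled by $\pi(c)$ and the shift $f_c - f_{c-1}$) into row/column (in)equalities, bounding the vertical strips using the lattice $y$-ranges together with strict column increase, and reversing the construction via the flag bounds and semistandardness. The paper carries out the bookkeeping you flag at the end—checking that the reconstructed edge $(r, w+f_r)\to(r+1, w+f_r)$ actually lies in the lattice by combining the induced flag bounds with semistandardness within $\rho^{ij}_s$—but your outline captures the same chain of reasoning.
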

\begin{proof}
    Let $P \in P(C_i, D_j)$, and $Q = \phi_{i,j}(P)$.
    The filling $Q$ is unique, as for every its cell with content $c$, 
    there is unique corresponding edge 
    $p_c = (c, w+f_c)\rightarrow (c+1, w+f_c)$ in $P$. 
    
    Let $\gamma,\ \delta$ be adjacent cells in $Q$ with 
    % Take two neighboring cells $\gamma$ and $\delta$ with 
    values $a$ and $b$ respectively. Let $c(\gamma)=c$, then $c(\delta)=c-1$.
    There are two cases: $\gamma$ is an upper neighbor of $\delta$ 
    with $\pi(c) = (1, \cdot)$, %, Fig.~\ref{fig:ytab-1a}),
    or $\gamma$ is a right neighbor of $\delta$ 
    with $\pi(c) = (0, \cdot)$. %Fig.~\ref{fig:ytab-1b}). 
    By definition of the map, $P$ contains edges $(c-1, p) \rightarrow (c, p)$ and $(c, q) \rightarrow (c+1, q)$, where $p = b + f_{c-1}$, $q = a + f_c$. 
    % Suppose that, 
    % $P$ goes through the edge $(c-1,p)\rightarrow (c,p)$ to score $b$, and 
    % through the edge $(c,q)\rightarrow (c+1,q)$ to score $a$.
    In the first case, by definition of $ZL_{\lambda/\mu}(\pi, \mathbf{a}, \mathbf{b})$, we have $q\le p$, which gives 
    $a<b$. In the second case, by definition of $ZL_{\lambda/\mu}(\pi, \mathbf{a}, \mathbf{b})$, we have $q\ge p$, which gives
    $a\ge b$. We proved that $Q$ is semistandard. 

    Now we need to prove that $Q$ respects the flags. Let $\rho^{ij}_r$ be vertical ribbon from the canonical decomposition of $\theta_i \# \theta_j$, 
    then from construction of $ZL_{\lambda/\mu}(\pi, \mathbf{a}, \mathbf{b})$ it follows that the maximum value 
    that the bottom cell can (possibly) be filled with is $b^{ij}_r$ 
    and the minimum value that the top cell can be filled with 
    is $a^{ij}_r$, and as the filling is semistandard it follows 
    that all cells of $b^{ij}_r$ are filled with values from  
    $[a^{ij}_r, b^{ij}_r]$. So, $Q \in  
    ZT_{\theta_i\# \theta_j}(\mathbf{a}^{ij}, \mathbf{b}^{ij})$. 
    
    We now define the inverse map 
    $$\phi_{i,j}^{-1}: ZT_{\theta_i \# \theta_j}(\mathbf{a}^{ij}, \mathbf{b}^{ij}) \rightarrow 
    P(C_i,D_j).$$ 
    Given $Q \in 
    ZT_{\theta_i\# \theta_j}(\mathbf{a}^{ij}, \mathbf{b}^{ij})$, 
    we construct unique path $P = \phi_{i,j}^{-1}(Q)$ from $C_i$ to $D_j$ 
    as follows: Suppose that a  
    cell $\epsilon$ with content $r$ is filled with value $w$. Then $P$ contains 
    edge $p_r=(r, w+f_r)\rightarrow (r+1, w+f_r)$. 
    Let us check that 
    this edge exists in the $\mathbb{Z}$-lattice. 
    Let $\theta_i\#\theta_j=\rho^{ij}_1\rightarrow...\rightarrow 
    \rho^{ij}_{k_{ij}}$,
    and suppose that the cell $\epsilon$ is in $\rho^{ij}_s$. 
    Then, by definition of 
    flagged $\mathbb{Z}$-SSYT, we have $a^{ij}_s\le 
    w\le b^{ij}_s$, 
    but as $Q$ is semistandard we also have  
    \[
    a^{ij}_s+c(\gamma^{ij}_s)-r\le 
    w\le b^{ij}_s+c(\delta^{ij}_s)-r,
    \]
    where $\delta^{ij}_s, \gamma^{ij}_s$ are the bottom and top cells (resp.) of the 
    vertical ribbon $\rho^{ij}_s$.
    Then having $m_r = \min(r),\ M_r = \max(r)$ 
    from the definition of flags $\mathbf{a}, \mathbf{b}$ it follows that 
    \[
    a_{\mathrm{col}(m_r)} - (\mu'_{\mathrm{col}(m_r)} + 1) + \mathrm{row}(m_r)+f_r\le 
    w+f_r\le b_{\mathrm{col}(M_r)} - \lambda_{\mathrm{col}(M_r)} + \mathrm{row}(M_r)+f_r,
    \]
    which shows that the edge $p_r$ is in $\mathbb{Z}$-lattice. Also, the path $P$ 
    contains vertical edges, which connect  
    end node of $p_{r-1}$ and starting node of $p_r$, for all $r$.
    Existence and uniqueness of such edges follows 
    from the definition of $ZL_{\lambda/\mu}(\pi, \mathbf{a}, \mathbf{b})$, and 
    that $Q$ is semistandard, thus $P \in P(C_i, D_j)$. Also 
    the path $P$ is unique, as every edge $p_c = (c,w+f_c)\rightarrow 
    (c+1, w+f_c)$ is uniquely defined by the cell of $Q$ of 
    content $c$, which proves that $\phi_{i,j}^{-1}$ is well defined.
    
    Finally, let 
    $P \in P(C_i, D_j)$. It is not hard to see that any horizontal edge $p_c$ in $P$ is contained in $\phi^{-1}_{i,j}(\phi_{i,j}(P))$. Also $\phi^{-1}_{i,j}(\phi_{i,j}(P))$ contains the same vertical edges as $P$, as end node and starting node of horizontal edges $p_{c-1}, p_c$ are connected uniquely, for all $c$, thus we have $\phi^{-1}_{i,j}(\phi_{i,j}(P)) = P$, which proves that $\phi^{-1}_{i,j}$ is indeed the inverse of $\phi_{i,j}$. 
    % then 
    % the cell with content $c$ in $\phi_{i,j}(P)$ contains $w$. It follows that $p_c$ is  
    % a component of $\phi^{-1}_{i,j}(\phi_{i,j}(P))$, which completes the proof.
\end{proof}

Let $\mathbf{P} = (P_1,...,P_k)$ be some non-intersecting 
system of paths where $P_i \in 
P(C_i,D_i)$. From Theorem \ref{thm:bijection-path-ribbon} it follows that the path $P_i$ maps 
to some filling of $\theta_i$ for all $i$, then the system of paths maps to some 
filling of the r-shape $\lambda/\mu$. Formally, define the map 
$$
\phi_Z : ZP_{\lambda/\mu}(\pi, \mathbf{a}, \mathbf{b}) \rightarrow ZT_{\lambda/\mu}
$$
given by
$$
\phi_Z : (P_1,...,P_k) \mapsto (\phi_{1,1}(P_1),...,\phi_{k,k}(P_k)) = T,
$$
where $T$ is constructed by composing  
flagged $\mathbb{Z}$-SSYTs $\phi_{i,i}(P_i)$ of r-shape $\theta_i$ 
for all $i \in [k]$. 
Now we aim to prove that the map 
$$
\phi_Z : ZP_{\lambda/\mu}(\pi, \mathbf{a}, \mathbf{b}) \rightarrow ZT_{\lambda/\mu}(\mathbf{a}, \mathbf{b})
$$
is a bijection 
between system of non-intersecting paths and flagged $\mathbb{Z}$-tableaux. 
The inverse map 
$$\phi^{-1}_Z: ZT_{\lambda/\mu}(\mathbf{a}, \mathbf{b}) \rightarrow ZP_{\lambda/\mu}(\pi, \mathbf{a}, \mathbf{b})$$ 
is defined 
as follows: given a flagged $\mathbb{Z}$-SSYT $T$ with flags $\mathbf{a}, \mathbf{b}$, decompose it into ribbons $(\theta_i)$. Let $T_{\theta_i}$ be part of $T$ which corresponds to the ribbon $\theta_i$. It is not hard to see that each filling $T_{\theta_i}$ can be seen as $\mathbb{Z}$-SSYT with flags $\mathbf{a}^{ii}$ and $\mathbf{b}^{ii}$ for all $i$. Thus, we can convert all fillings $T_{\theta_i}$ into paths $P_i = \phi^{-1}_{i,i}(T_{\theta_i})$ for all $i$. We set  
$\phi^{-1}_Z(T) = (P_1,...,P_k)$.
% this is how we get the system of paths $(P_1,...,P_k)$ for the given tableau $T$.
% then each ribbon $\theta_i$ is a flagged 
% $\mathbb{Z}$-SSYT with flags $\mathbf{a}^{ii}$ and $\mathbf{b}^{ii}$ for all $i$, so, 
% having $T_{\theta_i}$ as a part of $T$ which correponds to the ribbon 
% $\theta_i$, convert all 
% fillings of ribbons $T_{\theta_i}$ into paths 
% $P_i = \phi^{-1}_{i,i}(T_{\theta_i})$ for all $i$, 
% this is how we get the system of 
% paths $(P_1,...,P_k)$ for the given tableau $T$.

%\begin{lemma}
%    \label{lem:core-1}
%    Let $\gamma, \delta$ be two distinct cells of a semistandard tableau $T$ s.t. 
%    \begin{enumerate}
%        \item $row(\gamma)>row(\delta)$ and $col(\gamma)\ge col(\delta)$ then $T_{\gamma}>T_{\delta}$
%        \item $row(\gamma)\ge row(\delta)$ and $col(\gamma) > col(\delta)$ then $T_{\gamma}\ge T_{\delta}$     \end{enumerate}
%    \end{lemma}
%\begin{proof}
%    It follows from the semistandardness.
%\end{proof}

\begin{lemma}
    \label{lem:path-tableau-1}
    Let $\mathbf{P} = (P_1,...,P_k)$ such that $P_i \in P(C_i, D_i)$ for all $i$,
    and $T = \phi_Z(\mathbf{P})$. If some paths intersect 
    then $T$ is not semistandard.
\end{lemma}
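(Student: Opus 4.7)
The plan is to argue by contradiction: assume some pair $P_i,P_j$ with $i\ne j$ share a vertex $v=(x_0,y_0)$, and produce from this two cells of $T=\phi_Z(\mathbf{P})$, adjacent in $[\lambda/\mu]$, whose $T$-values violate the SSYT condition.

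First I would use Lemma~\ref{thm:bijection-path-ribbon} to describe how each path $P_s$ passes through the vertical column $\{x=x_0\}$ in the lattice. Between its horizontal edges at contents $x_0-1$ and $x_0$ (or between $C_s/D_s$ and the nearest horizontal edge, when $x_0$ coincides with the content of the head or lies just past the tail of $\theta_s$), the path $P_s$ traverses a vertical segment whose $y$-range is the integer interval $[\min(A_s,B_s),\max(A_s,B_s)]$, where $A_s=T(\epsilon_{s,x_0-1})+f_{x_0-1}$ and $B_s=T(\epsilon_{s,x_0})+f_{x_0}$, and $\epsilon_{s,c}$ denotes the cell of $\theta_s$ of content $c$. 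The assumption $v\in P_i\cap P_j$ then forces $y_0$ to lie in both intervals, so the two intervals overlap.

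Next, because $\Theta$ is an outer ribbon decomposition of a connected r-shape, whenever two distinct ribbons $\theta_i,\theta_j$ both contain cells of content $x_0$ (or both contain cells of content $x_0-1$) those two cells must be adjacent in $[\lambda/\mu]$, one sitting directly above or directly to the left of the other, with the alignment dictated by $\pi(x_0)$. The definitions of $f_{x_0-1},f_{x_0}$ in terms of the pipe vector are precisely calibrated so that the overlap of $[A_i,B_i]$ and $[A_j,B_j]$ translates into an inequality between the $T$-values of these two adjacent cells which is the reverse of the required SSYT inequality (weak decrease where strict increase along a column is required, or strict decrease where weak increase along a row is required). This contradicts the assumed semistandardness of $T$.

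The main technical obstacle is the case analysis: four subcases indexed by $\pi(x_0)\in\{(0,0),(0,1),(1,0),(1,1)\}$, together with the boundary subcases where $x_0$ is exactly at a head content $c(\delta_s)$ or one past a tail content $c(\gamma_s)$, so that the relevant vertical segment of $P_s$ begins at $C_s$ or ends at $D_s$. In each subcase one must check that the shift built into $f$ converts the overlap at $y_0$ into precisely the reversed SSYT inequality on the adjacent cells of $\theta_i$ and $\theta_j$ at the relevant content; this compatibility is baked into the very construction of $ZL_{\lambda/\mu}(\pi,\mathbf{a},\mathbf{b})$, but extracting it requires careful bookkeeping of directions in every case.
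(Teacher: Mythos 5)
Your overall strategy---contradiction, intersection point, translate back to a tableau inequality via the $f_c$-shift---matches the paper's, but there is a genuine gap in the middle step. You assert that ``whenever two distinct ribbons $\theta_i,\theta_j$ both contain cells of content $x_0$ (or both of content $x_0-1$), those two cells must be adjacent in $[\lambda/\mu]$.'' This is false. Take $\lambda=(3,3,3)$ with the standard Giambelli hook decomposition: $\theta_1,\theta_2,\theta_3$ are the principal hooks, and the content-$0$ cells $(1,1),(2,2),(3,3)$ lie in three distinct ribbons; $\theta_1$ and $\theta_3$ each contain one but those cells are not adjacent. So an intersection of two paths does not, in general, hand you an adjacent pair of cells to compare. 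What the paper actually uses is weaker and works: it takes the \emph{first} intersection point (forcing one path to arrive horizontally, the other vertically), identifies the cells $\gamma$ (filled by $P_i$) and $\delta$ or $\eta$ (filled by $P_j$) with contents $c-1$ or $c$, and argues only that $\gamma$ lies in a known \emph{quadrant} relative to $\delta,\eta$ (bottom/bottom-right, or upper-left), depending on the pipe $\pi(c)$ and which perimeter the two ribbons start from. It then derives the violated inequality from a \emph{transitive} form of the semistandard condition along a staircase chain of cells, not from adjacency. Your ``interval overlap'' reformulation of the vertical segments is fine and gives the same numeric comparison, but to close the argument you need to replace the adjacency claim with this quadrant-position analysis (and to use the first intersection to ensure horizontal-vs-vertical arrival), otherwise the tableau contradiction does not follow from what you have established.
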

\begin{proof}
    Assume the contrary: some paths intersect and $T$ is semistandard. Suppose 
    that the paths $P_i, P_j$ first intersect at node $p$ with 
    the coordinates $(c, q + f_c)$, and suppose that $P_i$ reaches the node $p$ traversing the 
    horizontal edge $(c - 1, q + f_c) \rightarrow (c, q + f_c)$ filling the cell $\gamma$ (of content $c-1$) with the value $q$ (for some integer $q$), 
    and $P_j$ reaches $p$ traversing vertical edge. As $P_j$ intersects with $P_i$, it must 
    fill at least one of the two cells: either a cell $\delta$ with content $c-1$ or $\eta$ with 
    content $c$.
    
    There are two cases: $P_j$ reaches the intersection point from the bottom (i.e. $\pi(c) = (0, \cdot)$) or $P_j$ reaches the intersecting point from 
    above (i.e. $\pi(c) = (1, \cdot)$).
    
    In the former case (Fig.~\ref{fig:intersection-types-a} or 
    Fig.~\ref{fig:intersection-types-b}). If $\gamma$ is located bottom or bottom-right 
    of the cells $\delta$ and $\eta$ (Fig.~\ref{fig:Young-diagrams-2a}), then $\eta$ must 
    exist and must be filled with some value $a$, and 
    $a \ge q$ as 
    there is an intersection (Fig.~\ref{fig:intersection-types-a}). But it is impossible, since by the semistandard property we must have %lemma \ref{lem:core-1}, 
    $q > a$
    (notice that $\mathrm{row}(\gamma)>\mathrm{row}(\eta)$ and $\mathrm{col}(\gamma)\ge \mathrm{col}(\eta)$, as $\gamma$ is located bottom 
    or bottom-right of $\eta$).
    
    If $\gamma$ is located upper-left of the cells $\delta$ and $\eta$ 
    (Fig.~\ref{fig:Young-diagrams-2b}), then $\delta$ must exist and must be filled with 
    $b$, and $b < q$ as there is an intersection (Fig.~\ref{fig:intersection-types-b}), but it is impossible, as 
    $\mathrm{row}(\gamma)<\mathrm{row}(\delta)$ and $\mathrm{col}(\gamma)<\mathrm{col}(\delta)$, and due to semistandard property %lemma \ref{lem:core-1} it 
    we must have $b > q$.

    Now we focus on the case when $P_j$ reaches the point $p$ from above 
    (Fig.~\ref{fig:intersection-types-c} and Fig.~\ref{fig:intersection-types-d}). 
    Suppose that the cell $\gamma$ located on the bottom-right of the cells $\delta$ and 
    $\eta$ (Fig.~\ref{fig:Young-diagrams-2c}). Then the cell $\delta$ must exist and must 
    be filled with some value $b>q$ (Fig.~\ref{fig:intersection-types-c}), but it is impossible as $\mathrm{row}(\gamma)>\mathrm{row}(\delta)$ and 
    $\mathrm{col}(\gamma)>\mathrm{col}(\delta)$, so $q>b$ must hold due to semistandard property. %lemma \ref{lem:core-1}. 
    Contradiction.

    If the cell $\gamma$ located on left or upper-left of the cells 
    $\delta$ and $\eta$ (Fig.~\ref{fig:Young-diagrams-2d}), then $\eta$ must exist, 
    and filled with some value $a$ such that $a < q$ as there is an intersection (Fig.~\ref{fig:intersection-types-d}), but it is impossible, 
    since by the semistandard property we must have %lemma \ref{lem:core-1}, 
    $a\ge q$.
\end{proof}

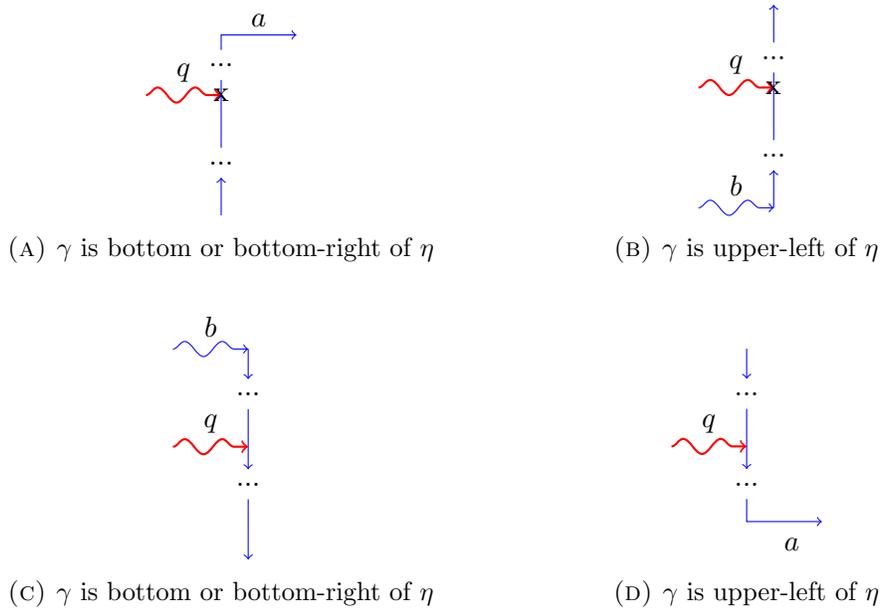
\begin{figure}
    \centering
    \begin{subfigure}[b]{0.5\textwidth}
    \centering
    \begin{tikzpicture}
        \draw[decorate, decoration={snake, amplitude=1mm, segment length=5mm}, ->, red, thick] (0, 1) -- (1, 1);
        \draw[->, blue] (1, 0.3) -- (1, 1);
        \fill[black] (1, 0.1) node {...};
        \draw[->, blue] (1, -0.6) -- (1, -0.1);
        \draw[blue] (1, 1) -- (1, 1.2);
        \fill (1, 1.4) node {...};
        \draw[->, blue] (1, 1.6) -- (1, 1.8) -- (2, 1.8);
        \fill (1.5, 2) node {$a$};
        \fill (0.5, 1.3) node {$q$};
        \fill (1, 1) node {x};
    \end{tikzpicture}
    \subcaption{$\gamma$ is bottom or bottom-right of $\eta$}
    \label{fig:intersection-types-a}
    \end{subfigure}
    \begin{subfigure}[b]{0.4\textwidth}
    \centering
    \begin{tikzpicture}
        \draw[decorate, decoration={snake, amplitude=1mm, segment length=5mm}, ->, red, thick] (0, 1) -- (1, 1);
        \draw[->, blue] (1, 0.3) -- (1, 1);
        \fill[black] (1, 0.1) node {...};
        \draw[->, blue] (1, -0.6) -- (1, -0.1);
        \draw[decorate, decoration={snake, amplitude=1mm, segment length=5mm}, ->, blue] (0, -0.6) -- (1, -0.6);
        \fill (0.5, -0.3) node {$b$};
        \draw[blue] (1, 1) -- (1, 1.2);
        \fill (1, 1.4) node {...};
        \draw[->, blue] (1, 1.6) -- (1, 2.1);;
        \fill (0.5, 1.3) node {$q$};
        \fill (1, 1) node {x};
    \end{tikzpicture}
    \subcaption{$\gamma$ is upper-left of $\eta$}
    \label{fig:intersection-types-b}
    \end{subfigure}\\[0.5cm]
    \begin{subfigure}[b]{0.5\textwidth}
    \centering
    \begin{tikzpicture}
        \draw[->, blue] (1, 0.5) -- (1, 0) -- (1, -0.3);
        \draw[decorate, decoration={snake, amplitude=1mm, segment length=5mm}, ->, red, thick] (0, 0) -- (1, 0);
        \fill (0.5, 0.3) node {$q$};
        \fill (1, 0.7) node {...};
        \draw[decorate, decoration={snake, amplitude=1mm, segment length=5mm}, ->, blue] (0, 1.3) -- (1, 1.3);
        \draw[->, blue] (1, 1.3) -- (1, 0.9);
        \fill (0.5, 1.6) node {$b$};
        \fill (1, -0.5) node {...};
        \draw[->, blue] (1, -0.7) -- (1, -1.5);
    \end{tikzpicture}    
    \subcaption{$\gamma$ is bottom or bottom-right of $\eta$}
    \label{fig:intersection-types-c}
\end{subfigure}
\begin{subfigure}[b]{0.4\textwidth}
    \centering
    \begin{tikzpicture}
        \draw[->, blue] (1, 0.5) -- (1, 0) -- (1, -0.3);
        \draw[decorate, decoration={snake, amplitude=1mm, segment length=5mm}, ->, red, thick] (0, 0) -- (1, 0);
        \fill (0.5, 0.3) node {$q$};
        \fill (1, 0.7) node {...};
        \draw[->, blue] (1, 1.3) -- (1, 0.9);
        \fill (1, -0.5) node {...};
        \draw[->, blue] (1, -0.7) -- (1, -1) -- (2, -1);
        \fill (1.6, -1.3) node {$a$};
    \end{tikzpicture}
    \subcaption{$\gamma$ is upper-left of $\eta$}
    \label{fig:intersection-types-d}
    \end{subfigure}
    \caption{Intersection types}
    \label{fig:intersection-types}
\end{figure}

\begin{figure}[!h]
    \centering
    \begin{subfigure}[b]{0.4\textwidth}
    \centering
    \resizebox{3cm}{!}{
    \begin{ytableau}
            *(red!20) \delta & \eta \\
            \none \\
            \none & \none & \none[\ddots] \\
            \none & \none & \none & \none & \gamma
    \end{ytableau}
    }
    \subcaption{}
    \label{fig:Young-diagrams-2a}
    \end{subfigure}
    \begin{subfigure}[b]{0.4\textwidth}
    \centering
    \resizebox{3cm}{!}{
    \begin{ytableau}
            \none & \gamma \\
            \none \\
            \none & \none & \none[\ddots] \\
            % \none \\
            \none & \none & \none & \none & \delta & *(red!20) \eta
    \end{ytableau}
    }
    \subcaption{}
    \label{fig:Young-diagrams-2b}
    \end{subfigure} \\[1cm]
    \begin{subfigure}[b]{0.4\textwidth}
    \centering
    \resizebox{3cm}{!}{
    \begin{ytableau}
            *(red!20) \eta\\
            \delta \\
            \none \\
            \none & \none & \none[\ddots] \\
            \none & \none & \none & \none & \gamma
    \end{ytableau}
    }
    \subcaption{}
    \label{fig:Young-diagrams-2c}
    \end{subfigure}
    \begin{subfigure}[b]{0.4\textwidth}
    \centering
    \resizebox{3cm}{!}{
    \begin{ytableau}
            \none & \gamma \\
            \none \\
            \none & \none & \none[\ddots] \\
            \none & \none & \none & \none & \eta \\
            \none & \none & \none & \none  & *(red!20)\delta
    \end{ytableau}
    }
    \subcaption{}
    \label{fig:Young-diagrams-2d}
    \end{subfigure} \\[0.5cm]

    \caption{}
    \label{fig:Young-diags-2}
\end{figure}

\begin{lemma}
    \label{lem:head-dir}
    Let $\theta_i, \theta_j$ be ribbons with the heads $\delta_i, \delta_j$ in the outer ribbon decomposition $\Theta$ such that 
    there are cells $\xi \in \theta_i, \zeta\in \theta_j$, where $\xi$ is located to the 
    upper-left of the cell $\zeta$, i.e. $\mathrm{row}(\xi)=\mathrm{row}(\zeta)-1, \mathrm{col}(\xi)=\mathrm{col}(\zeta)-1$. 
    If $c(\delta_i)<c(\delta_j)$ then $\pi({c(\delta_j)}) = (1, \cdot)$, and if  
    $c(\delta_i)>c(\delta_j)$ then $\pi({c(\delta_i)}) = (0, \cdot)$. %, where $\delta_i,\delta_j$ are heads of $\theta_i, \theta_j$ respectively.
\end{lemma}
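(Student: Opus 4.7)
The plan is to prove the lemma by combining an auxiliary monotonicity principle for cells of equal content with a ``head blocks its neighbor'' argument, then deriving a contradiction from weak monotonicity of $\mu$ in Case~1 and of $\lambda$ in Case~2.

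First I would establish the following monotonicity claim. For each content $d'$ in the common range $[\max(c(\delta_i),c(\delta_j)),\min(c(\gamma_i),c(\gamma_j))]$, let $\xi_{d'}$ and $\zeta_{d'}$ denote the unique cells of content $d'$ in $\theta_i$ and $\theta_j$ respectively; then $\mathrm{row}(\xi_{d'})<\mathrm{row}(\zeta_{d'})$ and $\mathrm{col}(\xi_{d'})<\mathrm{col}(\zeta_{d'})$. The base case $d'=c(\xi)=c(\zeta)$ is the lemma hypothesis. The inductive step uses the defining property of the cutting strip that all cells of $\lambda/\mu$ of a fixed content share one direction: passing between contents $d'$ and $d'\pm 1$ applies the same unit translation to both $\xi_{d'}$ and $\zeta_{d'}$, so the strict upper-left relation is preserved in both directions of the induction.

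In Case~1, set $d=c(\delta_j)$ and assume for contradiction $\pi(d)=(0,\cdot)$, i.e.\ cells of content $d-1$ go right. The cell $(\mathrm{row}(\delta_j),\mathrm{col}(\delta_j)-1)$ cannot lie in $\lambda/\mu$: otherwise it has content $d-1$, and since its right neighbor $\delta_j$ is in $\lambda/\mu$, the only way it can go right is to have $\delta_j$ as its successor in its own ribbon, contradicting the fact that $\delta_j$ is a head. Thus $\delta_j$ lies on the left perimeter and $\mu_{\mathrm{row}(\delta_j)}=\mathrm{col}(\delta_j)-1$. Because $c(\delta_i)<d\le c\le c(\gamma_i)$, the monotonicity claim applies at $d$ and produces $\xi_d\in\theta_i$ with $\mathrm{row}(\xi_d)<\mathrm{row}(\delta_j)$ and $\mathrm{col}(\xi_d)<\mathrm{col}(\delta_j)$. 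Weak monotonicity of $\mu$ then gives
\[
\mu_{\mathrm{row}(\xi_d)}\ \ge\ \mu_{\mathrm{row}(\delta_j)}\ =\ \mathrm{col}(\delta_j)-1\ \ge\ \mathrm{col}(\xi_d),
\]
placing $\xi_d$ inside $[\mu]$ and contradicting $\xi_d\in\theta_i\subset\lambda/\mu$.

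Case~2 is the mirror image, with the roles of $\mu,\lambda$ and the left/bottom perimeters interchanged. Setting $d^*=c(\delta_i)$ and assuming $\pi(d^*)=(1,\cdot)$ (cells of content $d^*-1$ go up), the cell $(\mathrm{row}(\delta_i)+1,\mathrm{col}(\delta_i))$ would otherwise have $\delta_i$ as its successor via its upward direction, contradicting $\delta_i$ being a head. Hence $\delta_i$ lies on the bottom perimeter and $\lambda_{\mathrm{row}(\delta_i)+1}\le\mathrm{col}(\delta_i)-1$. The monotonicity claim at $d^*$ provides $\beta\in\theta_j$ with $\mathrm{row}(\beta)>\mathrm{row}(\delta_i)$ and $\mathrm{col}(\beta)>\mathrm{col}(\delta_i)$, and weak monotonicity of $\lambda$ yields
\[
\lambda_{\mathrm{row}(\beta)}\ \le\ \lambda_{\mathrm{row}(\delta_i)+1}\ \le\ \mathrm{col}(\delta_i)-1\ <\ \mathrm{col}(\beta),
\]
so $\beta\notin[\lambda]$, a contradiction. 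The main technical point I foresee is formulating the monotonicity claim uniformly enough to cover both directions of the induction and both cases of the lemma simultaneously; once this is in place, each contradiction reduces to a one-line application of the weak monotonicity of a single bounding partition.
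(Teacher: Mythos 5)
Your proof is correct and follows essentially the same strategy as the paper's: negate the conclusion, deduce that $\delta_j$ (resp.\ $\delta_i$) lies on the left (resp.\ bottom) perimeter, and derive a contradiction with the monotone boundary of the skew shape. The diagonal-monotonicity claim you isolate and prove by induction along contents is exactly what the paper compresses into the unelaborated assertion ``the cell $\delta_j$ must be lower than $\delta_i$,'' and applying it to the cell of $\theta_i$ of content $c(\delta_j)$ (rather than jumping to the head $\delta_i$) is a cleaner way to reach the same contradiction.
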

\begin{proof}
    Let us prove the case when $c(\delta_i)<c(\delta_j)$. 
    Suppose the statement is not true, i.e. $\pi({c(\delta_j)}) = (0, \cdot)$, 
    which means the cell $\delta_j$ is on the left perimeter of the diagram. As $\xi$ is located to the 
    upper-left of the cell $\zeta$, the cell $\delta_j$ must be lower than $\delta_i$, i.e. $\mathrm{row}(\delta_j) > \mathrm{row}(\delta_i)$. 
    But as $c(\delta_i)<c(\delta_j)$, $\mathrm{col}(\delta_j) > \mathrm{col}(\delta_i)$ must hold,  
    which is impossible due to the fact that for all cells $\eta$ with $\mathrm{row}(\eta) < \mathrm{row}(\delta_j)$, we have $\mathrm{col}(\eta) \ge \mathrm{col}(\delta_j)$ by definition of diagram. In other words, all cells above $\delta_j$ must lie either directly above it or in an upper-right position to $\delta_j$, as $\delta_j$ is on the left parameter.

    The case when $c(\delta_i)>c(\delta_j)$ can be proven similarly.
\end{proof}

\begin{lemma}
    \label{lem:no-wrong-cells}
    Let $\mathbf{P} = (P_1,...,P_k)$ where $P_i \in P(C_i, D_i)$ for all $i \in [k]$,
    and $T = \phi_Z(\mathbf{P})$. 
    Let $\xi, \zeta$ be cells located in the ribbons $\theta_i,\theta_j$ 
    (with the heads $\delta_i,\delta_j$), respectively. If one of the following 
    statements holds then some paths intersect:
    \begin{enumerate}
        \item $\xi$ is the left neighbor of $\zeta$ and $T_{\xi}>T_{\zeta}$;
        \item $\xi$ is the upper neighbor of $\zeta$ and $T_{\xi}\ge T_{\zeta}$.
    \end{enumerate}
\end{lemma}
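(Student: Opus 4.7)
The plan is to analyze case (1) in detail by tracking $P_i$ and $P_j$ across one column of the $\mathbb{Z}$-lattice, handle case (2) by the symmetric argument on the dual column, and close the gap left by the interior sub-case by a simultaneous induction on the content of the violating pair. Writing $\xi = (r, c+r)$ and $\zeta = (r, c+r+1)$ with $c = c(\xi)$, the fact that $\zeta \in \theta_j$ lies directly to the right of $\xi \in \theta_i$ with $i \neq j$ means that $\xi$ cannot ``go right'' in the cutting strip and so it goes up. Consequently $\pi(c+1) = (1, \cdot)$, $f_{c+1} = f_c + 1$, and every vertical edge of the lattice at column $c+1$ points downward; both $P_i$ and $P_j$ therefore traverse column $c+1$ as a single descending interval.

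The core calculation is then to describe these intervals. Measuring heights relative to $f_c$, the $P_i$-interval is $[L_i, U_i]$ with $U_i = T_\xi$ (the arrival endpoint of the horizontal edge for $\xi$), and $L_i = T_{\xi^+} + 1$ if $\xi^+ = (r-1, c+r) \in \theta_i$ exists, or $L_i = \overline{a}_c$ if $\xi$ is the tail; symmetrically, the $P_j$-interval is $[L_j, U_j]$ with $L_j = T_\zeta + 1$ and $U_j = T_\eta$ or $U_j = \overline{b}_{c+1} + 1$ according to whether $\eta = (r+1, c+r+1) \in \theta_j$ exists. The hypothesis $T_\xi > T_\zeta$ immediately yields $U_i \geq L_j$, so the intervals overlap --- producing a common vertex and hence a path intersection --- precisely when $L_i \leq U_j$. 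In the three boundary sub-cases ($\xi$ is a tail, or $\zeta$ is a head, or both), this inequality follows directly from the column flag conditions; in the mixed tail--head sub-case one has to chain two consecutive flag inequalities together with the implicit consistency $\overline{a}_{c+1} \leq \overline{b}_{c+1}$.

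The hard part, and the main obstacle, is the interior sub-case in which $\xi$ is not a tail and $\zeta$ is not a head, where the overlap condition reduces to $T_{\xi^+} < T_\eta$. Suppose toward a contradiction that $T_{\xi^+} \geq T_\eta$; introduce the diagonal cell $\tau = (r-1, c+r+1)$, which lies in $\lambda/\mu$ by the standard partition argument ($\mu_{r-1} < c+r$ from $\xi^+ \in \lambda/\mu$ and $\lambda_{r-1} \geq c+r+1$ from $\zeta \in \lambda/\mu$). Splitting on the direction of content $c+1$ in the cutting strip: if ``right,'' then $\tau$ is the right-successor of $\xi^+$ in $\theta_i$, so weak row increase forces $T_\tau \geq T_{\xi^+} \geq T_\eta > T_\zeta$, producing a case-(2) violation at the upper-neighbor pair $(\tau, \zeta) \in \theta_i \times \theta_j$ with contents strictly above $c$. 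If ``up,'' then $\tau$ is the up-successor of $\zeta$ in $\theta_j$, so strict column increase in $\theta_j$ gives $T_\tau < T_\zeta$; analyzing column $c+2$ then either closes the argument directly (when $\xi^+$ is a tail, by the flag bound $\overline{a}_{c+1} \leq T_\zeta$) or, when $\xi^{++} = (r-2, c+r) \in \theta_i$ satisfies $T_{\xi^{++}} \geq T_\zeta$, recognises that $(\xi^+, \tau)$ is itself a case-(1) violation at contents $(c+1, c+2)$ and iterates.

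Case (2) will be handled by the symmetric argument on column $c$: the lower-neighbor relation forces the direction of content $c-1$ to be ``right,'' the vertical edges at column $c$ ascend, and an identical range/flag analysis with the analogous diagonal cell $\tau' = (r+1, c+r-1)$ reduces the interior sub-case to either direct intersection or a case-(1) violation at strictly lower content. The simultaneous induction terminates because each reduction strictly pushes the violation toward a boundary of $\lambda/\mu$, where the flag sub-cases close the argument: the distance from $\xi, \zeta$ to the nearest tail or head of their ribbons serves as a well-founded measure that strictly decreases at every step.
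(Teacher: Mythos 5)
Your overall strategy is genuinely different from the paper's: you carry out a local, column-by-column analysis of the vertical intervals traversed by $P_i$ and $P_j$ in $ZL_{\lambda/\mu}(\pi,\mathbf{a},\mathbf{b})$, whereas the paper first invokes Lemma~\ref{lem:head-dir} to pin down the positions of the starting nodes $C_i, C_j$ on the upper or lower boundary of the lattice and then closes with a global topological (Jordan-curve-type) crossing argument. The local interval-overlap framework you set up is a reasonable alternative in principle, and the boundary sub-cases do go through, since the nodes traversed by each $P_\ell$ already lie inside the lattice and hence automatically satisfy $\overline a_c \le T \le \overline b_c$ at every column.

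However, there is a genuine gap in the interior sub-case: the termination of your ``simultaneous induction'' is not justified, and the proposed measure does not decrease. Your case-(1) reductions always push the contents of the violating pair \emph{up} (toward the tails), whereas your case-(2) reductions, by your own description, push them \emph{down} (toward the heads), and the two cases feed into each other. Concretely, starting from a case-(1) violation at $(\xi,\zeta)$ with contents $\{c,c+1\}$, the ``right'' branch produces the case-(2) pair $(\tau,\zeta)$ at contents $\{c+2,c+1\}$; applying your case-(2) reduction there (the cutting-strip direction at content $c$ is already forced to be ``up'' from the original situation) returns you to a case-(1) pair at contents $\{c,c+1\}$ --- the pair you began with. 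So ``distance from $\xi,\zeta$ to the nearest tail or head of their ribbons'' can oscillate rather than decrease. To make this approach work you would need either to march monotonically column-by-column (say $c+1, c+2, \dots$ for case (1)), tracking the accumulating non-overlap hypotheses until a tail is hit, or to replace the recursion with the paper's global argument: Lemma~\ref{lem:head-dir} shows $C_i$ and $C_j$ sit on the two opposite boundaries of the lattice in a specific horizontal order, while $T_\xi > T_\zeta$ forces $P_i$ to lie strictly above $P_j$ at column $c+1$, and these two facts together force a crossing with no induction at all.
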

\begin{proof}
    Let us show (1). If 
    $\zeta$ is the head of $\theta_j$, then there will be intersection, which is 
    shown in Fig.~\ref{fig:intersection-types-2a},  
    where $a=T_{\xi}, b=T_{\zeta}$. Now suppose that $\zeta$ is not the head of 
    $\theta_j$, which means that there is a cell $\eta$ under $\zeta$. Suppose that 
    $c(\delta_i)<c(\delta_j)$, then by Lemma~\ref{lem:head-dir}, $\pi({c(\delta_j)}) = (1, \cdot)$, 
    which means $C_j$ will be placed on the top. Then as 
    $c(\delta_i)<c(\delta_j)$ it follows that $C_i$ should be placed to the left of $C_j$ (i.e. $C_i^1 < C_j^1$ for their first coordinates). But then the path $P_i$ should intersect somewhere the path 
    $P_j$ as $a>b$ (see Fig.~\ref{fig:head-inter-1a} when $C_i$ is located at the bottom). 
    If $c(\delta_i)>c(\delta_j)$,  then by Lemma~\ref{lem:head-dir}, $\pi({c(\delta_i)}) = (0, \cdot)$, 
    which means $C_i$ will be placed at the bottom. Then as 
    $c(\delta_i)>c(\delta_j)$ it follows that $C_j$ is placed to the left of $C_i$ (i.e. $C_j^1 < C_i^1$). But then the path $P_j$ should intersect somewhere the path $P_i$ 
    as $a>b$ (see Fig.~\ref{fig:head-inter-1b} when $C_j$ located at the bottom). 
    The statement (2) can be proven similarly.
\end{proof}

\begin{figure}
    \centering
    \begin{subfigure}[b]{0.4\textwidth}
    \centering
    \begin{tikzpicture}
        \draw[->, red] (1, 0.5) -- (1, 0) -- (1, -0.3);
        \draw[->, blue] (0, 0) -- (1, 0);
        \fill (0.5, 0.2) node {$a$};
        \fill (1, 0.7) node {...};
        \draw[->, red] (1, 1.3) -- (1, 0.9);
        \fill (1, -0.5) node {...};
        \draw[->, red] (1, -0.7) -- (1, -1) -- (2, -1);
        \fill (1.6, -1.3) node {$b$};
    \end{tikzpicture}
    \subcaption{}
    \label{fig:intersection-types-2a}
    \end{subfigure}
    \begin{subfigure}[b]{0.4\textwidth}
    \centering
    \begin{tikzpicture}
        \draw[->, red] (0, 1) -- (1, 1);
        \draw[->, blue] (1, 0.3) -- (1, 1);
        \fill[black] (1, 0.1) node {...};
        \draw[->, blue] (1, -0.6) -- (1, -0.1);
        % \draw[blue] (0, -0.6) -- (1, -0.6);
        \draw[blue] (1, 1) -- (1, 1.2);
        \fill (1, 1.4) node {...};
        \draw[->, blue] (1, 1.6) -- (1, 1.8) -- (2, 1.8);
        \fill (1.5, 2) node {$a$};
        \fill (0.5, 1.2) node {$b$};
    \end{tikzpicture}
    \subcaption{}
    \label{fig:intersection-types-2c}
    \end{subfigure}
    \caption{}
\end{figure}

\begin{figure}
    \centering
    \begin{subfigure}[b]{0.5\textwidth}
        \centering
        \resizebox{4.5cm}{!}{
        \begin{tikzpicture}
            \draw (1,2) grid (3, 5);
            \draw[->, line width=0.5mm, blue] (1, 3) -- (2, 3);
            \foreach \i in {-1,...,5}
                \fill[black] (1, \i) circle (2pt) node[left] {};
            \fill (2, 1.5) node {...};
            \draw (1, -1) grid (3, 1);
            \draw[red, line width=0.5mm] plot [smooth] coordinates {(-2, 5) (-1, 4.3) (0.5, 1.1) (2, 0)};
            \draw[blue, line width=0.5mm] plot [smooth] coordinates {(-3, -1) (-2, 4) (-0.5, 3.5) (1, 3)};
            \draw[->, line width=0.5mm, red] (2, 0) -- (3, 0);
            \fill (1, -1) circle node[left] {$b$};
            \fill (1, 0) circle node[left] {$b+1$};
            \fill (1, 1) node[left] {$b+2$};
            \fill (1, 2) node[left] {$a-1$};
            \fill (1, 3) node[left] {$a$};
            \fill (1, 4) node[left] {$a+1$};
            \fill (1, 5) node[left] {$a+2$};
            \fill[blue] (-3, -1) node[below] {$C_i$};
            \fill[red] (-2, 5) node[above] {$C_{j}$};
        \end{tikzpicture}
        }
        \caption{$c(\delta_i)<c(\delta_j)$}
        \label{fig:head-inter-1a}
    \end{subfigure}
    \begin{subfigure}[b]{0.49\textwidth}
        \centering
        \resizebox{4.5cm}{!}{
        \begin{tikzpicture}
            \draw (1,2) grid (3, 5);
            
            \draw[->, line width=0.5mm, blue] (1, 3) -- (2, 3);
            \foreach \i in {-1,...,5}
                \fill[black] (1, \i) circle (2pt) node[left] {};
            \fill (2, 1.5) node {...};
            \draw (1, -1) grid (3, 1);
            \draw[red, line width=0.5mm] plot [smooth] coordinates {(-3, -1) (-1, 2.3) (0.5, 1.1) (2, 0)};
            \draw[blue, line width=0.5mm] plot [smooth] coordinates {(-2, -1) (-1, 4.3) (0.5, 3.5) (1, 3)};
            \draw[->, line width=0.5mm, red] (2, 0) -- (3, 0);
            \fill (1, -1) circle node[left] {$b$};
            \fill (1, 0) circle node[left] {$b+1$};
            \fill (1, 1) node[left] {$b+2$};
            \fill (1, 2) node[left] {$a-1$};
            \fill (1, 3) node[left] {$a$};
            \fill (1, 4) node[left] {$a+1$};
            \fill (1, 5) node[left] {$a+2$};
            \fill[red] (-3, -1) node[below] {$C_j$};
            \fill[blue] (-2, -1) node[below] {$C_{i}$};
        \end{tikzpicture}
        }
        \caption{$c(\delta_i)>c(\delta_j)$}
        \label{fig:head-inter-1b}
    \end{subfigure}
    \caption{}
\end{figure}

\begin{lemma}
    \label{thm:bijection-super-tableau-non-intersecting-paths}
    The map 
    $$
    \phi_Z : ZP_{\lambda/\mu}(\pi, \mathbf{a}, \mathbf{b}) \rightarrow ZT_{\lambda/\mu}(\mathbf{a}, \mathbf{b})
    $$
    is a well defined bijection.
\end{lemma}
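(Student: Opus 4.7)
The plan is to assemble the bijection from the per-ribbon bijections $\phi_{i,i}$ established in Lemma~\ref{thm:bijection-path-ribbon}, using Lemmas~\ref{lem:path-tableau-1} and \ref{lem:no-wrong-cells} to control the cross-ribbon interactions. First I would verify well-definedness: given a non-intersecting system $\mathbf{P}=(P_1,\ldots,P_k)$ with $P_i\in P(C_i,D_i)$, set $T=\phi_Z(\mathbf{P})$ and note that by Lemma~\ref{thm:bijection-path-ribbon} (specialized to $i=j$), each restriction $T_{\theta_i}=\phi_{i,i}(P_i)$ is a flagged $\mathbb{Z}$-SSYT on the r-shape $\theta_i$ with induced flags $\mathbf{a}^{ii},\mathbf{b}^{ii}$. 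So semistandardness and the local flag bounds hold cell-wise inside each ribbon. For adjacent cells $\xi,\zeta$ lying in different ribbons $\theta_i,\theta_j$ with $i\ne j$, the contrapositive of Lemma~\ref{lem:no-wrong-cells} yields the required strict/weak inequality: if the wrong inequality held, two paths of $\mathbf{P}$ would have to intersect, contradicting our hypothesis. Together this gives semistandardness of $T$ across all of $\lambda/\mu$.

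To upgrade the induced ribbon bounds to the global flag bounds $a_j\le T_{i,j}\le b_j$, I would argue column by column. For a given cell $(i,j)$, there is a vertical strip in some canonical decomposition of $\theta_s\#\theta_s$ containing it; the induced flags are defined precisely via the leftmost/rightmost cells $m^{ss}_r,M^{ss}_r$ of a given content in $\lambda/\mu$ and reflect row offsets relative to those extremal cells. Using the semistandard inequalities (both within $\theta_s$ and across ribbons, via Lemma~\ref{lem:no-wrong-cells}), one can propagate the bound from $(i,j)$ to the extremal column cell where the induced flag coincides with $a_j$ (resp.\ $b_j$) up to the explicit row-shift appearing in the definition of $\mathbf{a}^{ss},\mathbf{b}^{ss}$. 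This yields $T\in ZT_{\lambda/\mu}(\mathbf{a},\mathbf{b})$.

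Next I would construct the inverse. Given $T\in ZT_{\lambda/\mu}(\mathbf{a},\mathbf{b})$, split $T$ into its ribbon pieces $T_{\theta_i}$. A similar column-propagation argument, now starting from the global bounds $a_j\le T_{i,j}\le b_j$ and using semistandardness of $T$, shows $T_{\theta_i}\in ZT_{\theta_i}(\mathbf{a}^{ii},\mathbf{b}^{ii})$, so that $P_i:=\phi_{i,i}^{-1}(T_{\theta_i})\in P(C_i,D_i)$ is well defined by Lemma~\ref{thm:bijection-path-ribbon}. Non-intersection of the resulting system $\mathbf{P}=(P_1,\ldots,P_k)$ is exactly the contrapositive of Lemma~\ref{lem:path-tableau-1}: if some pair $P_i,P_j$ intersected, then $T$ would fail to be semistandard, contradicting $T\in ZT_{\lambda/\mu}(\mathbf{a},\mathbf{b})$. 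Injectivity of $\phi_Z$ and the verifications $\phi_Z\circ\phi_Z^{-1}=\mathrm{id}$, $\phi_Z^{-1}\circ\phi_Z=\mathrm{id}$ then follow componentwise from the corresponding properties of each $\phi_{i,i}$.

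I expect the main technical obstacle to be the unpacking of the global flag bounds $\mathbf{a},\mathbf{b}$ into the induced ribbon bounds $\mathbf{a}^{ii},\mathbf{b}^{ii}$ (and conversely), because the induced flags are anchored at leftmost/rightmost cells of a given content in $\lambda/\mu$, while a generic cell $(i,j)$ need not be extremal. The resolution is the chain-of-inequalities argument sketched above, which is purely combinatorial but requires bookkeeping with the row/column shifts in the definition $a^{ii}_r = a_{\mathrm{col}(m^{ii}_r)} - (\mu'_{\mathrm{col}(m^{ii}_r)}+1) + \mathrm{row}(m^{ii}_r)$ and the analogous one for $b^{ii}_r$. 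Once this bookkeeping is settled, the two-way implication between non-intersection and global semistandardness provided by Lemmas~\ref{lem:path-tableau-1} and \ref{lem:no-wrong-cells} delivers the bijection.
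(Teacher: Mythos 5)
Your proposal follows essentially the same architecture as the paper's proof: Lemma~\ref{thm:bijection-path-ribbon} gives per-ribbon well-definedness, Lemma~\ref{lem:no-wrong-cells} (contrapositive) supplies cross-ribbon semistandardness, and Lemma~\ref{lem:path-tableau-1} (contrapositive) shows the inverse map lands in non-intersecting systems; the two-sided inverse property is inherited componentwise from the $\phi_{i,i}$. The one place where your route diverges is in verifying the global flag bounds $a_j \le T_{i,j} \le b_j$: the paper argues directly from the $\mathbb{Z}$-lattice construction — the horizontal edges at content $c(\gamma)$, where $\gamma$ is the bottommost (resp.\ topmost) cell of column $j$, are bounded by $\overline{b}_{c(\gamma)}+f_{c(\gamma)}$ (resp.\ $\overline{a}_{c(\gamma)}+f_{c(\gamma)}$), and $\overline{b}_{c(\gamma)}=b_j$, $\overline{a}_{c(\gamma)}=a_j$ because the bottom (resp.\ top) cell of a column is the rightmost (resp.\ leftmost) cell of its content in $\lambda/\mu$; semistandardness then interpolates the whole column. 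Your plan to propagate from the induced ribbon flags $\mathbf{a}^{ss},\mathbf{b}^{ss}$ is a more circuitous route to the same facts (the anchor cells $m^{ss}_r$, $M^{ss}_r$ for the extremal strip containing the top/bottom of column $j$ coincide with those very cells, so the induced bound equals $a_j$, $b_j$ there), and it works, but it requires exactly the kind of row/column bookkeeping you flag as a concern, whereas reading the bound off the lattice sidesteps it. Either way the argument closes; the paper's version is just more direct at that step.
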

\begin{proof}
    Let $\mathbf{P} = (P_1,...,P_k) \in ZP_{\lambda/\mu}(\pi, \mathbf{a}, \mathbf{b})$ 
    be non-intersecting path system. 
    %(i.e. the system of paths $\mathbf{P}$ is non-intersecting), 
    We first prove that 
    $T = \phi_Z(\mathbf{P}) \in ZT_{\lambda/\mu}(\mathbf{a}, \mathbf{b})$.
    By Lemma~\ref{lem:no-wrong-cells} it follows that $T$ is semistandard. 
    Now we have to prove that $T$ respects the flags $\mathbf{a}, \mathbf{b}$, 
    i.e. that the values in $j$-th column of $T$ are in the interval 
    $[a_j, b_j]$. From 
    the construction of $ZL_{\lambda/\mu}(\pi, \mathbf{a}, \mathbf{b})$: 
    taking the bottommost cell 
    $\gamma$ in $j$-th column, it is filled by the edge 
    $e = (c(\gamma),p) \rightarrow (c(\gamma)+1,p)$ for some $p$. 
    Then $\max(p)$ is $b_j + f_{c(\gamma)}$, and so
    maximum value the cell $\gamma$ can be filled with 
    is $b_j$. 
    Using the same idea, it follows that the minimum possible 
    value that the top cell of $j$-th column can be filled with 
    is $a_j$.  
    As $T$ is semistandard, 
    all values in $j$-th column are between $a_j$ and 
    $b_j$, which proves that $T \in ZT_{\lambda/\mu}(\mathbf{a}, \mathbf{b})$.

    Now we need to prove that the inverse map $\phi^{-1}_Z$ is well defined. 
    Let $T \in ZT_{\lambda/\mu}(\mathbf{a}, \mathbf{b})$. Then it follows that
    $\phi^{-1}_Z(T) = (\phi^{-1}_{i,i}(T_{\theta_i}))_i$ is a 
    non-intersecting system of paths, as otherwise by 
    Lemma~\ref{lem:path-tableau-1} we have 
    $(\phi_{i,i}(\phi^{-1}_{i,i}(T_{\theta_i})))_i = T$ is not  
    semistandard, which is a contradiciton, and so 
    $\phi^{-1}_Z(T) \in ZP_{\lambda/\mu}(\pi, \mathbf{a}, \mathbf{b})$.
    
    Finally, the map $\phi_Z^{-1}$ is indeed the inverse of $\phi_Z$, which follows 
    from the fact that $\phi^{-1}_{ii}$ is the inverse of $\phi_{ii}$ for 
    all $i$.
\end{proof}

\begin{lemma}
    \label{lem:restrictions-for-heads-and-tails-ribbons}
    Let $\theta_i,\theta_j$ be ribbons from the outer ribbon decomposition 
    $\Theta$ and let $\delta_r, \gamma_r$ be the head and tail (resp.) of $\theta_r$ for $r = i, j$. If $c(\delta_i) < c(\delta_j) < c(\gamma_i)$, $\pi({c(\delta_i)})=\pi({c(\delta_j)})=(0, \cdot)$ and $\pi(c(\gamma_i)) = (\cdot, 1)$, then $c(\gamma_i)>c(\gamma_j)$.
%    \textcolor{orange}{[! Why notations for heads are different for different lemmas?]}
\end{lemma}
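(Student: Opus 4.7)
The plan is to argue by contradiction: assume $c(\gamma_j) \ge c(\gamma_i)$ and derive an inconsistency with the geometry of $\lambda/\mu$. The main tool is the fact (implicit in the construction of the cutting strip) that all cells of a fixed content in $\lambda/\mu$ go in the same direction, up or right.

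First I would decode the hypotheses on $\pi$. The condition $\pi(c(\gamma_i))^2 = 1$ forces every cell of content $c(\gamma_i)$ to go up; in particular the tail $\gamma_i$ lies on the top perimeter. The condition $\pi(c(\delta_i))^1 = 0$ says that, for $c(\delta_i) > r$, cells of content $c(\delta_i) - 1$ go right. If $\delta_i = (a, b)$ were on the bottom perimeter but not on the left perimeter, the cell $(a, b-1) \in \lambda/\mu$ would have content $c(\delta_i) - 1$; since $\delta_i$ is a head, $(a, b-1)$ cannot belong to $\theta_i$, and were $(a, b-1)$ to go right its right-successor would be $\delta_i$, a contradiction, so $(a, b-1)$ must go up, contradicting $\pi(c(\delta_i))^1 = 0$. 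Hence $\delta_i$ lies on the left perimeter (the case $c(\delta_i) = r$ is automatic since no cell of content $r-1$ exists), and the same reasoning places $\delta_j$ on the left perimeter.

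With both heads on the left perimeter, writing $\delta_i = (i_3, \mu_{i_3}+1)$ and $\delta_j = (i_4, \mu_{i_4}+1)$, the inequality $c(\delta_i) < c(\delta_j)$ together with monotonicity of $\mu$ gives $i_3 > i_4$, so $\delta_i$ sits strictly below $\delta_j$. Next I would verify that the cell of $\theta_i$ with content $c(\delta_j)$ lies strictly southeast of $\delta_j$: otherwise, being on the same anti-diagonal it would lie weakly northwest of $\delta_j$, and then its column would be at most $\mu_{i_4}$ while its row would be strictly smaller than $i_4$, so monotonicity of $\mu$ would place it in $\mu$, a contradiction. Then by induction on the content, using the fact that at each level $c$ both ribbons' cells step in the same direction, the relative position is preserved: for every $c \in [c(\delta_j), c(\gamma_i)]$, the cell of $\theta_j$ at content $c$ lies strictly northwest of the cell of $\theta_i$ at content $c$.

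Specializing to $c = c(\gamma_i)$, the cell of $\theta_j$ with content $c(\gamma_i)$, which exists by the contradiction hypothesis, is some $(p', q')$ with $p' < i_1$ and $q' < j_1$ where $\gamma_i = (i_1, j_1)$. I would then split into two subcases. If $c(\gamma_j) = c(\gamma_i)$, then $(p', q') = \gamma_j$ lies on the top perimeter since cells of content $c(\gamma_i)$ go up; but two distinct top-perimeter cells $(i, j)$ and $(i', j')$ satisfy $i < i'$ iff $j > j'$ (both equal $\mu'_{(\cdot)} + 1$ in their respective columns), contradicting $p' < i_1$ and $q' < j_1$ simultaneously. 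If $c(\gamma_j) > c(\gamma_i)$, then $(p', q')$ is not a tail, so going up places $(p' - 1, q') \in \lambda/\mu$, whence $p' \ge \mu'_{q'} + 2$; combined with $\mu'_{q'} \ge \mu'_{j_1}$ (from $q' < j_1$), this gives $p' \ge \mu'_{j_1} + 2$, contradicting $p' < i_1 = \mu'_{j_1} + 1$. The main technical hurdle is the inductive step establishing that two parallel sub-ribbons cannot collide as content increases; the remaining steps are bookkeeping with the inequalities defining the left and top perimeters.
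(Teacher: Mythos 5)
Your proof is correct and follows essentially the same geometric approach as the paper's (both heads on the left perimeter because $\pi(c(\delta_i))^1=\pi(c(\delta_j))^1=0$, then $\delta_j$ strictly above $\delta_i$, then track the ribbons until $\gamma_i$ hits the top perimeter). The difference is only in the level of detail: you work by contradiction and carefully justify, via an induction on content and a case split, the "$\theta_j$ stays strictly northwest of $\theta_i$" step that the paper's proof asserts directly from connectedness.
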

\begin{proof}
    The heads $\delta_i, \delta_j$ are on the left perimeter of the r-shape as $\pi({c(\delta_i)})=\pi({c(\delta_j)})=(0, \cdot)$. It follows that $\mathrm{row}(\delta_j) < \mathrm{row}(\delta_i)$ as $c(\delta_i) < c(\delta_j)$ and $\delta_i, \delta_j$ are on the left perimeter. As the ribbons are connected by definition, and $\gamma_i$ is on the top perimeter ($\pi(c(\gamma_i)) = (\cdot, 1)$), it follows that $\mathrm{row}(\gamma_j) \ge \mathrm{row}(\gamma_i)$ and $\mathrm{col}(\gamma_j) < \mathrm{col}(\gamma_i)$, which shows the needed. 
\end{proof}
The next lemma will be useful for applying the LGV lemma in the next subsection.
\begin{lemma}
    \label{lemma:bad-paths-intersect}
    Consider the flagged $\mathbb{Z}$-lattice $ZL_{\lambda/\mu}(\pi, \mathbf{a}, \mathbf{b})$ and outer ribbon decomposition 
    $\Theta=(\theta_1,...,\theta_k)$ of $\lambda/\mu$. Let $\sigma \in S_k$ be a nonidentity permutation. Consider a system of paths $(P_1,...,P_k)$ on $ZL_{\lambda/\mu}(\pi, \mathbf{a}, \mathbf{b})$ such that $P_i$ is a path from $C_i$ to $D_{\sigma(i)}$. Then some paths are intersecting.
\end{lemma}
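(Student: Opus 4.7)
The plan is to argue by contradiction. Suppose there is a non-intersecting system $(P_1,\ldots,P_k)$ with $P_i \in P(C_i,D_{\sigma(i)})$ and $\sigma \ne \mathrm{id}$; the goal is to locate two paths that must share a lattice vertex.

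Since $\sigma$ is non-identity, after fixing an ordering of the ribbons---for example, ordering $\theta_1,\ldots,\theta_k$ by reading the heads $\delta_i$ clockwise along the left/bottom perimeter of $\lambda/\mu$---there exists an inversion: indices $i<j$ with $\sigma(i)>\sigma(j)$. Focus on the pair of paths $P_i:C_i\to D_{\sigma(i)}$ and $P_j:C_j\to D_{\sigma(j)}$. Lemma~\ref{lem:restrictions-for-heads-and-tails-ribbons} guarantees that under this ordering the tails $\gamma_i$ appear in a compatible order along the top/right perimeter; in particular, it rules out ``crossing'' (as opposed to nested or disjoint) content ranges of ribbons whose heads lie on the left perimeter with one tail on the top perimeter. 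Consequently, the sources $C_i,C_j$ and sinks $D_{\sigma(i)},D_{\sigma(j)}$ are placed in a genuinely ``swapped'' configuration on the boundary of the lattice.

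Next, I would show that $P_i$ and $P_j$ must cross. Lattice paths in $ZL_{\lambda/\mu}(\pi,\mathbf{a},\mathbf{b})$ have horizontal edges going in the $+x$ direction, so the $x$-coordinate along each path is monotone increasing. Using the explicit coordinates of $C_i,C_j,D_{\sigma(i)},D_{\sigma(j)}$---which depend on whether heads sit on the left or bottom perimeter, on whether tails sit on the top or right perimeter, and on the shifts $f_c$---one checks that $P_i$ and $P_j$ are on opposite sides in $y$-coordinate at both their starting $x$-values and their ending $x$-values, but with the order reversed by the inversion. In a planar lattice where vertical edges move consistently within each region (up in the $e$-region, down in the $h$-region), a swap in the relative $y$-order of two rightward-moving paths forces them to share a lattice vertex somewhere in the overlap of their $x$-ranges, yielding the desired contradiction with the non-intersection assumption.

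The main obstacle is a careful case analysis over the placement of the relevant heads and tails on the four boundary types (left/bottom for heads, top/right for tails), since the explicit $y$-coordinates of $C$'s and $D$'s and the sign of vertical moves differ across cases. Lemma~\ref{lem:restrictions-for-heads-and-tails-ribbons} is the essential combinatorial tool that restricts the feasible perimeter configurations and lets the argument above reduce to a small set of geometrically tractable subcases; once these are laid out, the intersection follows from a direct tracking of $y$-coordinates along $P_i$ and $P_j$ through their common $x$-range.
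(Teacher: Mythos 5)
Your proposal sketches the standard LGV/Jordan-curve heuristic—fix a boundary order, pick an inversion $i<j$ with $\sigma(i)>\sigma(j)$, and claim $P_i,P_j$ must cross because their endpoints are ``swapped''—but this heuristic does not go through in this lattice without substantial additional work, and you have not supplied it. Two concrete gaps:

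First, ordering the ribbons by head content does \emph{not} order the tails (hence the $D$'s) compatibly: outer ribbons can \emph{nest}, and Lemma~\ref{lem:restrictions-for-heads-and-tails-ribbons} is precisely a nesting statement ($c(\delta_i)<c(\delta_j)<c(\gamma_i)$ forces $c(\gamma_i)>c(\gamma_j)$), not a statement that tails appear in the same order as heads. So an index inversion does \emph{not} automatically produce a geometrically swapped pair $(C_i,C_j,D_{\sigma(j)},D_{\sigma(i)})$ on the boundary of the lattice, and the crucial assertion that ``$P_i$ and $P_j$ are on opposite sides in $y$-coordinate at both ends, with the order reversed'' is exactly the thing that needs to be proved and is in general false for an arbitrary inversion. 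Second, even when two such paths are not obviously swapped, they may \emph{nest} rather than cross (one running inside the other), in which case a pairwise intermediate-value argument gives nothing. This is the situation the paper handles by following the entire cycle of $\sigma$ containing the chosen index.

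The paper's actual argument is therefore structurally different from yours: it orders ribbons by head content, takes the \emph{smallest} non-fixed index $j$, and then iterates around the cycle $j,\sigma(j),\sigma^2(j),\ldots$, showing at each step that non-intersection forces the next $C$ or $D$ into a narrower window of positions (with Lemma~\ref{lem:restrictions-for-heads-and-tails-ribbons} ruling out the ``escape'' placements). The contradiction arises only at the end of the cycle, when $D_j$ would have to sit between $C_{\sigma^{r-1}(j)}$ and $D_{\sigma(j)}$, which Lemma~\ref{lem:restrictions-for-heads-and-tails-ribbons} applied to $\theta_j,\theta_{\sigma(j)}$ forbids. Your single-pair approach collapses the whole inductive case analysis into the phrase ``one checks,'' which is where the real content of the lemma lives. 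To repair the proposal you would need either to show that some \emph{specific} inversion always yields geometrically swapped endpoints in every upper/lower boundary case (which I do not believe is true without the cycle argument), or to follow the cycle as the paper does.
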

\begin{proof}
    Let $\delta_i, \gamma_i$ be the head and tail of $\theta_i$. Let $C_i$ be some starting node. We say that {\it it is placed on lower boundary} if $\pi({c(\delta_i)}) = (0, \cdot)$ and {\it on upper boundary} if $\pi({c(\delta_i)}) = (1, \cdot)$. %in $ZL_{\lambda/\mu}(\pi, \mathbf{a}, \mathbf{b})$. 
    Similarly, we say that an end-node $D_i$ {\it is placed on lower boundary if $\pi({c(\gamma_i)}) = (1, \cdot)$ and on upper boundary if $\pi({c(\gamma_i)}) = (0, \cdot)$}. %in the $ZL_{\lambda/\mu}(\pi, \mathbf{a}, \mathbf{b})$.

    Let $\delta_i,\gamma_i$ be the head and tail (respectively) of the ribbon 
    $\theta_i$, and $C_i = (C_i^1, C_i^2),\ D_i = (D_i^1, D_i^2)$, $i \in [k]$. We assume that 
    $C_i^1\le D_{\sigma(i)}^1$ for all $i\in [k]$, as otherwise 
    there is no path in $P(C_i,D_{\sigma(i)})$. Also, let $r$ be the order of $\sigma$, i.e. $\sigma^r=id$. We order ribbons $(\theta_i)$ by $c(\delta_i)$ (i.e. by the coordinates $C_i^1$) in ascending order, i.e. if $i<j$ then $c(\delta_i)<c(\delta_j)$ ($C_i^1<C_j^1$). Let $j\in [k]$ be the smallest number s.t. $j\neq \sigma(j)$ and consider the path $P_j$. Notice that $C_j^1\le D_{\sigma(j)}^1-1$, as otherwise it will contradict the choice of the number $j$. From our assumption and the choice of $j$, it follows that 
    $C_j^1<C_{\sigma(j)}^1<D_{\sigma(j)}^1$. Note that the path started 
    from $C_{\sigma(j)}$ goes to $D_{\sigma^2(j)}$. There are different 
    cases based on the choices of locations for $C_j,C_{\sigma(j)},D_{\sigma(j)}$, and 
    we will show the proof for the case when $C_j,C_{\sigma(j)},D_{\sigma(j)}$ are on 
    lower boundary of the lattice; the other cases can be proved similarly.
    
    %\begin{remark}
    Note that in $ZL_{\lambda/\mu}(\pi, \mathbf{a}, \mathbf{b})$ all end-nodes $C_i, D_i$ are placed on 
    upper-boundary or lower-boundary (by definition), and so a path cannot "`outflank' a 
    starting or ending node $N$ by visiting upper (if $N$ is on upper bound) or lower 
    (if $N$ is on lower bound) nodes.
    %\end{remark}
    
    Assume that the starting and ending nodes $C_j,C_{\sigma(j)},D_{\sigma(j)}$ 
    are on lower boundary. In this case, $D_{\sigma^2(j)}^1<D_{\sigma(j)}^1$ and 
    $D_{\sigma^2(j)}$ is placed on lower boundary, as otherwise the paths 
    $P_j$ and $P_{\sigma(j)}$ will inevitably intersect 
    (see Fig.~\ref{fig:wrong-paths-intersect-a}). Then it follows that 
    $C_{\sigma^2(j)}$ is on lower boundary too, and it must be to the right of 
    $C_{\sigma(j)}$ (i.e. $C_{\sigma^2(j)}^1 > C_{\sigma(j)}^1$), as otherwise the ribbons $\theta_{\sigma(j)}, \theta_{\sigma^2(j)}$ are impossible by construction: for example, suppose that $C_{\sigma^2(j)}^1$ is placed on lower boundary and $C_{\sigma^2(j)}^1 < C_{\sigma(j)}^1$, then by Lemma~\ref{lem:restrictions-for-heads-and-tails-ribbons} 
    it is not possible to have ribbons $\theta_{\sigma(j)},\theta_{\sigma^2(j)}$ in $\Theta$, as $c(\gamma_{\sigma^2(j)}) < c(\gamma_{\sigma(j)})$. Next, $D_{\sigma^3(j)}$ 
    is placed to the left of $D_{\sigma^2(j)}$ to avoid intersection, and
    so $C_j^1<C_{\sigma^2(j)}^1<D_{\sigma^2(j)}^1<D_{\sigma(j)}^1$. By continuing 
    this procedure inductively one ends up with locating $C_{\sigma^{r-1}(j)}$ on lower boundary, 
    to the right of $C_{\sigma(j)}$ and to the left of $D_{\sigma(j)}$. Then $D_{\sigma^r(j)}=D_j$ must 
    be located to the right of $C_{\sigma^{r-1}(j)}$ and to the left of $D_{\sigma(j)}$ 
    (see Fig.~\ref{fig:wrong-paths-intersect-b}), which in its turn contradicts Lemma~\ref{lem:restrictions-for-heads-and-tails-ribbons} 
    (choosing $\theta_j$ and $\theta_{\sigma(j)}$).
\end{proof}

\begin{figure}
    \centering
    \begin{subfigure}[b]{0.5\textwidth}
    \centering
    \resizebox{1\textwidth}{!}{
    \begin{tikzpicture}
        \draw[red, line width=0.5mm] plot [smooth] coordinates {(0,0) (1,2) (2,3) (3,3) (4,2.5) (5,1) (6,0)};
        \draw[blue, line width=0.5mm] plot [smooth] coordinates { (1,0) (2,2) (3, 2) (4, 1) (5, 0)};
        \fill[red] (0,0) circle (2pt) node[below] {$C_j$};
        \fill[red] (6,0) circle (2pt) node[below] {$D_{\sigma(j)}$};
        \fill[blue] (1,0) circle (2pt) node[below] {$C_{\sigma(j)}$};
        \fill[blue] (5,0) circle (2pt) node[below] {$D_{\sigma^2(j)}$};
        \fill[purple] (5, 3) circle (2pt) node[above] {$\xcancel{D_{\sigma^2(j)}}$};
        \fill[purple] (7, 3) circle (2pt) node[above] {$\xcancel{D_{\sigma^2(j)}}$};
        \fill[purple] (7, 0) circle (2pt) node[below] {$\xcancel{D_{\sigma^2(j)}}$};
    \end{tikzpicture}
    }
    \subcaption{}
    \label{fig:wrong-paths-intersect-a}
    \end{subfigure}
    \begin{subfigure}[b]{0.49\textwidth}
    \centering
    \resizebox{1\textwidth}{!}{
    \begin{tikzpicture}
        \draw[red, line width=0.5mm] plot [smooth] coordinates {(0,0) (1,2) (2,3) (3,3) (4,2.5) (5,1) (6,0)};
        \draw[blue, line width=0.5mm] plot [smooth] coordinates { (0.5,0) (2,2.7) (3, 2.7) (4, 2) (5.5, 0)};
        \draw[green!50!black, line width=0.5mm] plot [smooth] coordinates { (1,0) (1.5,1.5) (2.5, 2.5) (3.5,2) (5, 0)};
        \draw[black, line width=0.5mm] plot [smooth] coordinates { (2,0) (2.5, 1.5) (3,2) (3.5,1.5) (4, 0)};
        \fill[red] (0,0) circle (2pt) node[left] {$C_j$};
        \fill[red] (6,0) circle (2pt) node[right] {$D_{\sigma(j)}$};
        \fill[blue] (0.5,0) circle (2pt) node[below] {$C_{\sigma(j)}$};
        \fill[blue] (5.5,0) circle (2pt) node[below] {$D_{\sigma^2(j)}$};
        \fill[green!50!black] (1,0) circle (2pt) node[below] {};
        \fill[green!50!black] (5,0) circle (2pt) node[below] {};
        \fill[green!50!black] (1,-0.5) node[below] {$C_{\sigma^2(j)}$};
        \fill[green!50!black] (5,-0.5) node[below] {$D_{\sigma^3(j)}$};
        \fill[black] (2,0) node[below] {$C_{\sigma^{r-1}(j)}$};
        \fill[black] (4,0) node[below] {$D_{\sigma^{r}(j)}$};
        \fill (1.5, 0.5) node {$...$};
        \fill (4.2, 0.5) node {$...$};
    \end{tikzpicture}
    }
    \subcaption{}
    \label{fig:wrong-paths-intersect-b}
    \end{subfigure}
    \caption{System of paths for case in the proof.}
    \label{fig:wrong-paths-intersect}
\end{figure}
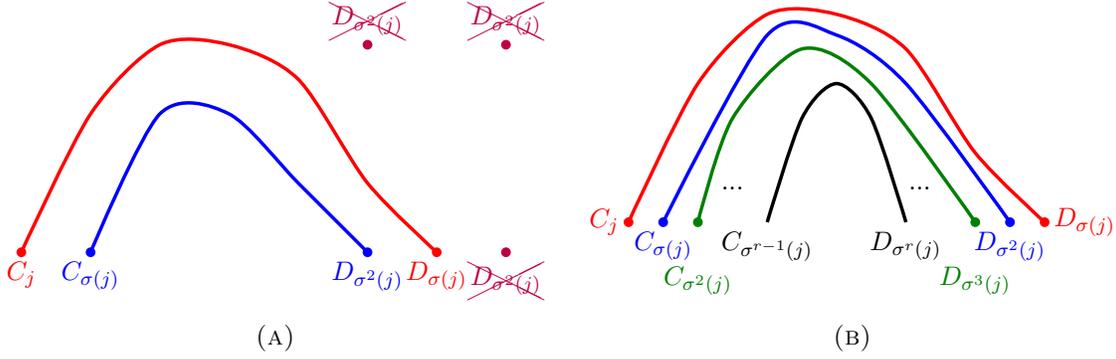

\subsection{Super lattice and enumerators}
\label{subsec:enumerator}
%In this subsection our main goal is to prove Theorem~\ref{thm:signed-expansion}.

We are going to construct \textit{weighted lattice graph} on the  $\mathbb{Z}$-lattice discussed above. 
\begin{definition}
Given an edge $e$, let us denote its weight as $wt(e)$. Let $E$ be the set of edges of $ZL_{\lambda/\mu}(\pi, \mathbf{a}, \mathbf{b})$, we 
define a new multiset of edges $E'$ as follows: 
let $e = (i, j + f_i) \rightarrow (i+1, j + f_i) \in E$, then the set $E'$ contains 
two copies of $e$: $e_1$ and $e_2$ with different weights $wt(e_1) = y_j
, wt(e_2) = -z_{j+i}$. We call the edges $e_1, e_2$ as \textit{representations} of the edge $e$. 
All vertical edges $e$ in $E$ are in $E'$ as well and their weights are defined to be $1$.
    We define the flagged {\it super lattice} $SL_{\lambda/\mu}(\pi, \mathbf{a}, \mathbf{b})$ 
    as an edge-induced subgraph of $\mathbb{Z}^2$ by the multiset $E'$.
\end{definition}
Let $\Theta = (\theta_1, \ldots, \theta_k)$ be an outer ribbon decomposition of the r-shape 
$\lambda/\mu$. We consider systems of paths on 
$SL_{\lambda/\mu}(\pi, \mathbf{a}, \mathbf{b})$, 
whose starting and ending points $\mathbf{C} = (C_1,\ldots, C_k), \mathbf{D} =  (D_1,\ldots, D_k)$ are placed in 
the same positions as in 
$ZL_{\lambda/\mu}(\pi, \mathbf{a}, \mathbf{b})$ shown before. %Let $P = (p_1, \ldots, p_s)$ be some path on $SL_{\lambda/\mu}(\pi, \mathbf{a}, \mathbf{b})$ with edges $p_i$. Then we define 
The weight $wt(P)$ of a path $P$ is defined as:
$$
wt(P) = \prod_{\text{edge } e \in P} wt(e).
$$

As any edge $e$ in $E$ is also in $E'$, 
any path $P$ on $ZL_{\lambda/\mu}(\pi, \mathbf{a}, \mathbf{b})$ 
can be represented on $SL_{\lambda/\mu}(\pi, \mathbf{a}, \mathbf{b})$ using the same edges, 
but as some nodes are connected by 
several edges, there might be 
multiple paths $P'_1,...,P'_s$ which represent the $P$ 
on the super lattice. We call the paths $P'_1,...,P'_s$ 
\textit{representations} of 
the path $P$.
(Note that all the paths $P'_i$ are the same, but their weights are different.) 
Consequently, for a system of paths $\mathbf{P}$ on  
$ZL_{\lambda/\mu}(\pi, \mathbf{a}, \mathbf{b})$, there are 
its copies $\mathbf{P}'_1,...,\mathbf{P}'_r$ 
on $SL_{\lambda/\mu}(\pi, \mathbf{a}, \mathbf{b})$, and similarly, 
we call them \textit{representations} of the system $\mathbf{P}$. Let us define the weight $wt(\mathbf{P})$ of system of paths $\mathbf{P} = (P_1,\ldots, P_k)$ as 
$$
wt(\mathbf{P}) = \prod_{i = 1}^k wt(P_i).
$$

Let us recall that we established bijection between the sets 
$ZP_{\lambda/\mu}(\pi, \mathbf{a}, \mathbf{b})$ and $ZT_{\lambda/\mu}(\mathbf{a}, \mathbf{b})$.
Let now $SP_{\lambda/\mu}(\pi, \mathbf{a}, \mathbf{b})$ be the set of systems of 
non-intersecting paths $\mathbf{P}=(P_1,...,P_k)$, where 
$P_i$ is a path from $C_i$ to $D_i$ on $SL_{\lambda/\mu}(\pi, \mathbf{a}, \mathbf{b})$. We are going to construct  
a weight-preserving bijection $\psi$ between the sets 
$SP_{\lambda/\mu}(\pi, \mathbf{a}, \mathbf{b})$ and 
$ST_{\lambda/\mu}(\mathbf{a}, \mathbf{b})$.

\begin{remark} %\textcolor{orange}{ADDED NEW REMARK}
    From Lemma~\ref{lemma:bad-paths-intersect} it follows that on the $\mathbb{Z}$-lattice, a system of paths $\mathbf{P} = (P_i)$ is non-intersecting if and only $P_i \in P(C_i, D_i)$ for all $i$. This is also true for systems of paths on the super lattice, as every path is some  representation of the same path on the $\mathbb{Z}$-lattice. Thus, the set $SP_{\lambda/\mu}(\pi, \mathbf{a}, \mathbf{b})$ is indeed the set of all possible non-intersecting systems of paths on the super lattice $SL_{\lambda/\mu}(\pi, \mathbf{a}, \mathbf{b})$.
\end{remark}

Let $P_S(C_i, D_j)$ be the set of paths from $C_i$ to $D_j$ on $SL_{\lambda/\mu}(\pi, \mathbf{a}, \mathbf{b})$, and let $ST_{\lambda/\mu}$ be the 
set of all tableaux of shape $\lambda/\mu$ with entries entries from $\mathbb{Z} \cup \mathbb{Z}'$. %are $r$ and $r'$ where $r$ is an arbitrary integer. 
We define the map 
$$
    \psi_{i,j} : P_S(C_i, D_j) \rightarrow ST_{\theta_i \# \theta_j}
$$
as follows. Let $P\in P_S(C_i, D_j)$. 
We construct the tableau $Q \in 
ST_{\theta_i \# \theta_j}$ as follows: for all contents 
$c \in [c(\delta_i), c(\gamma_j)]$ (where $\delta_i, \gamma_j$ 
are the head and tail of the ribbon $\theta_i \# \theta_j$), 
let $p_c = (c, w + f_c) \rightarrow (c + 1, w + f_c)$ for some 
$w$, be an 
edge in $P$. Then the cell $\zeta$ of $Q$ with the content $c$ 
has the value $w$ if $wt(p_c) = y_w$, or it has the value 
$w + c$ if $wt(p_c) = -z_{w + c}$.

\begin{figure}
    \centering
    \resizebox{4cm}{!}{
    \begin{ytableau}
        \none & \none & *(red!70) 1' & *(green!70) -1 & *(purple!70) 2 \\
        \none & *(orange!70) 0' & *(red!70) 0 & *(green!70) 4' & *(purple!70) 6' \\
        *(blue!50) 1 & *(red!70) 3 & *(red!70) 3 & *(green!70) 5' \\
        *(blue!50) -1' & *(red!70) 2' & *(green!70) 5 & *(green!70) 7\\
        *(red!70) 0' & *(red!70) 3'
    \end{ytableau}
    }
    \caption{Example of a flagged super tableau.}
    \label{fig:ex-s-tableau}
\end{figure}

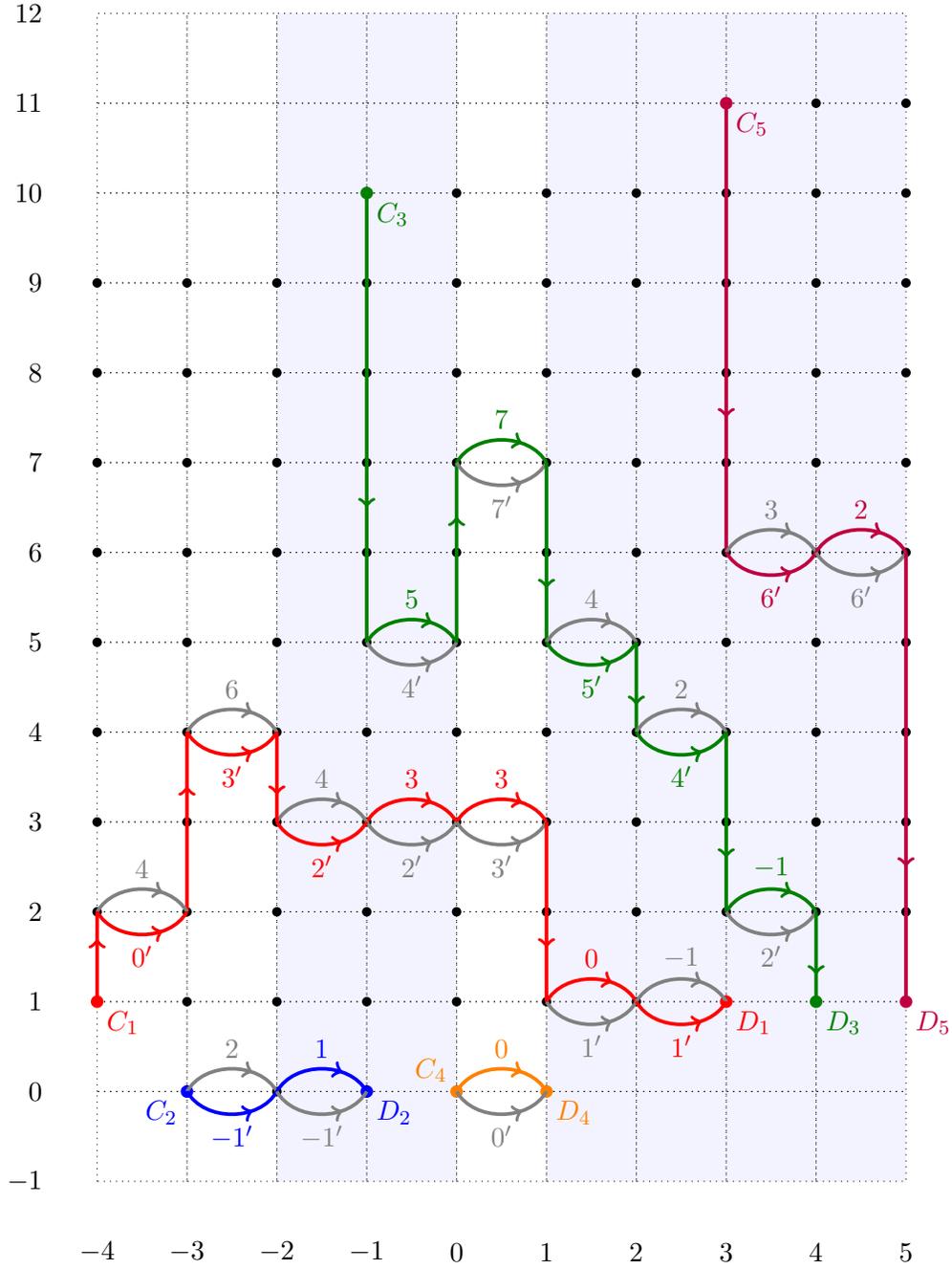
\begin{figure}
    \centering
    \begin{tikzpicture}[scale=1.25]
        % Loop over x and y to draw rectangles
        \foreach \x in {-4,...,4} {
        \foreach \y in {-1,...,11} {
            \draw[dotted, thin] (\x,\y) rectangle ++(1,1);
        }
        }
        \foreach \x in {-1  ,...,12} {
            \fill[black] (-4.5,\x) circle (0pt) node[left] {$\x$};
        }
        \fill[blue, opacity=0.05] (-2,-1) rectangle (0, 12);
        \fill[blue, opacity=0.05] (1,-1) rectangle (5, 12);
        \foreach \x in {1,...,9} {
        \fill[black] (-4,\x) circle (1.5pt) node[left] {};
        }
        \foreach \x in {0,...,9} {
            \fill[black] (-3,\x) circle (1.5pt) node[left] {};
        }
        \foreach \x in {0,...,9} {
            \fill[black] (-2,\x) circle (1.5pt) node[left] {};
        }
        \foreach \x in {0,...,10} {
            \fill[black] (-1,\x) circle (1.5pt) node[left] {};
        }
        \foreach \x in {0,...,10} {
            \fill[black] (0,\x) circle (1.5pt) node[left] {};
        }
        \foreach \x in {0,...,10} {
            \fill[black] (1,\x) circle (1.5pt) node[left] {};
        }
        \foreach \x in {1,...,10} {
            \fill[black] (2,\x) circle (1.5pt) node[left] {};
        }
        \foreach \x in {1,...,11} {
            \fill[black] (3,\x) circle (1.5pt) node[left] {};
        }
        \foreach \x in {1,...,11} {
            \fill[black] (4,\x) circle (1.5pt) node[left] {};
        }
        \foreach \x in {1,...,11} {
            \fill[black] (5,\x) circle (1.5pt) node[left] {};
        }
        \foreach \x in {-4,...,5} {
            \fill[black] (\x, -2) circle (0pt) node[above] {$\x$};
        }
        \fill[red] (-4, 1) circle (2pt) node[below right] {$C_1$};
        \fill[blue] (-3, 0) circle (2pt) node[below left] {$C_2$};
        \fill[green!50!black] (-1, 10) circle (2pt) node[below right] {$C_3$};
        \fill[orange] (0, 0) circle (2pt) node[above left] {$C_4$};
        \fill[purple] (3, 11) circle (2pt) node[below right] {$C_5$};
        \fill[red] (3, 1) circle (2pt) node[below right] {$D_1$};
        \fill[blue] (-1, 0) circle (2pt) node[below right] {$D_2$};
        \fill[green!50!black] (4, 1) circle (2pt) node[below right] {$D_3$};
        \fill[orange] (1, 0) circle (2pt) node[below right] {$D_4$};
        \fill[purple] (5, 1) circle (2pt) node[below right] {$D_5$};
        % \draw[line width=0.7mm, blue, ->] (-3, -1) -- (-3, 0);
        \draw[line width=0.5mm, gray, ->-] (-3, 0) to[out=60, in=120] node[above]{$2$} (-2, 0);
        \draw[line width=0.5mm, blue, ->-] (-3, 0) to[out=-60, in=-120] node[below]{$-1'$} (-2, 0);
        \draw[line width=0.5mm, blue, ->-] (-2, 0) to[out=60, in=120] node[above]{$1$} (-1, 0);
        \draw[line width=0.5mm, gray, ->-] (-2, 0) to[out=-60, in=-120] node[below]{$-1'$} (-1, 0);
        \draw[line width=0.5mm, red, ->-] (0, 3) to[out=60, in=120] node[above]{$3$} (1,3);
        \draw[line width=0.5mm, gray, ->-] (0, 3) to[out=-60, in=-120] node[below]{$3'$} (1,3);
        \draw[line width=0.5mm, red, ->-] (1, 1) to[out=60, in=120] node[above]{$0$} (2,1);
        \draw[line width=0.5mm, gray, ->-] (1, 1) to[out=-60, in=-120] node[below]{$1'$} (2,1);
        \draw[line width=0.5mm, gray, ->-] (2, 1) to[out=60, in=120] node[above]{$-1$} (3,1);
        \draw[line width=0.5mm, red, ->-] (2, 1) to[out=-60, in=-120] node[below]{$1'$} (3,1);
        \draw[line width=0.5mm, red, ->-] (1,3) -- (1,1);
        \draw[line width=0.5mm, red, ->-] (-1, 3) to[out=60, in=120] node[above]{$3$} (0,3);
        \draw[line width=0.5mm, gray, ->-] (-1, 3) to[out=-60, in=-120] node[below]{$2'$} (0,3);
        \draw[line width=0.5mm, gray, ->-] (-2, 3) to[out=60, in=120] node[above]{$4$} (-1,3);
        \draw[line width=0.5mm, red, ->-] (-2, 3) to[out=-60, in=-120] node[below]{$2'$} (-1,3);
        \draw[line width=0.5mm, gray, ->-] (-3, 4) to[out=60, in=120] node[above]{$6$} (-2,4);
        \draw[line width=0.5mm, red, ->-] (-3, 4) to[out=-60, in=-120] node[below]{$3'$} (-2,4);
        \draw[line width=0.5mm, red, ->-] (-2, 4) -- (-2,3);
        \draw[line width=0.5mm, red, ->-] (-3, 2) -- (-3,4);
        \draw[line width=0.5mm, red, ->-] (-4, 1) -- (-4,2);
        \draw[line width=0.5mm, gray, ->-] (-4, 2) to[out=60, in=120] node[above]{$4$} (-3,2);
        \draw[line width=0.5mm, red, ->-] (-4, 2) to[out=-60, in=-120] node[below]{$0'$} (-3,2);

        \draw[line width=0.5mm, green!50!black, ->-] (-1, 10) -- (-1,5);
        \draw[line width=0.5mm, green!50!black, ->-] (-1, 5) to[out=60, in=120] node[above]{$5$} (0,5);
        \draw[line width=0.5mm, gray, ->-] (-1, 5) to[out=-60, in=-120] node[below]{$4'$} (0,5);
        \draw[line width=0.5mm, green!50!black, ->-] (0, 5) -- (0,7);
        \draw[line width=0.5mm, green!50!black, ->-] (0,7) to[out=60, in=120] node[above]{$7$} (1,7);
        \draw[line width=0.5mm, gray, ->-] (0,7) to[out=-60, in=-120] node[below]{$7'$} (1,7);
        \draw[line width=0.5mm, green!50!black, ->-] (1,7) -- (1,5);
        \draw[line width=0.5mm, gray, ->-] (1,5) to[out=60, in=120] node[above]{$4$} (2,5);
        \draw[line width=0.5mm, green!50!black, ->-] (1,5) to[out=-60, in=-120] node[below]{$5'$} (2,5);
        \draw[line width=0.5mm, green!50!black, ->-] (2,5) -- (2,4);
        \draw[line width=0.5mm, gray, ->-] (2,4) to[out=60, in=120] node[above]{$2$} (3,4);
        \draw[line width=0.5mm, green!50!black, ->-] (2,4) to[out=-60, in=-120] node[below]{$4'$} (3,4);
        \draw[line width=0.5mm, green!50!black, ->-] (3,4) -- (3,2);
        \draw[line width=0.5mm, green!50!black, ->-] (3,2) to[out=60, in=120] node[above]{$-1$} (4,2);
        \draw[line width=0.5mm, gray, ->-] (3,2) to[out=-60, in=-120] node[below]{$2'$} (4,2);
        \draw[line width=0.5mm, green!50!black, ->-] (4,2) -- (4,1);

        \draw[line width=0.5mm, purple, ->-] (3,11) -- (3,6);
        \draw[line width=0.5mm, gray, ->-] (3,6) to[out=60, in=120] node[above]{$3$} (4,6);
        \draw[line width=0.5mm, purple, ->-] (3,6) to[out=-60, in=-120] node[below]{$6'$} (4,6);
        \draw[line width=0.5mm, purple, ->-] (4,6) to[out=60, in=120] node[above]{$2$} (5,6);
        \draw[line width=0.5mm, gray, ->-] (4,6) to[out=-60, in=-120] node[below]{$6'$} (5,6);
        \draw[line width=0.5mm, purple, ->-] (5,6) -- (5,1);

        \draw[line width=0.5mm, orange, ->-] (0,0) to[out=60, in=120] node[above]{$0$} (1,0);
        \draw[line width=0.5mm, gray, ->-] (0,0) to[out=-60, in=-120] node[below]{$0'$} (1,0);
    \end{tikzpicture}
\caption{Super lattice. The $e$-region is shown in light-blue color, and $h$-region is shown in white color. A path of some color corresponds to a filling of a ribbon with the same color in Fig.~\ref{fig:ex-s-tableau}.}
\label{fig:ex-ribbon-lattice-super}
\end{figure}

\begin{example}
    Consider the $\mathbb{Z}$-tableau $T'$ and the system of paths $\mathbf{P}'$ from Example~\ref{ex:z-tableau-and-lattice}. We give a representation of $T'$ on Fig.~\ref{fig:ex-s-tableau} and representation of $\mathbf{P}'$ in the super lattice on Fig.~\ref{fig:ex-ribbon-lattice-super}.
\end{example}

\begin{lemma}
    \label{lem:bt-path-bfr}
    The map 
    $$
    \psi_{i,j} : P_S(C_i, D_j) \rightarrow ST_{\theta_i \# \theta_j}
    (\mathbf{a}^{ij}, \mathbf{b}^{ij})
    $$ is a weight-preserving bijection.
\end{lemma}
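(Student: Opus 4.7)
The plan is to reduce Lemma~\ref{lem:bt-path-bfr} to the unweighted $\mathbb{Z}$-lattice bijection of Lemma~\ref{thm:bijection-path-ribbon} by observing that both sides split cleanly into a ``$\mathbb{Z}$-part'' plus a binary decoration. On the path side, each horizontal edge of $SL_{\lambda/\mu}(\pi, \mathbf{a}, \mathbf{b})$ is one of two parallel representations of an underlying $\mathbb{Z}$-lattice edge, carrying weight $y_w$ or $-z_{w+c}$; vertical edges coincide with those of $ZL_{\lambda/\mu}(\pi, \mathbf{a}, \mathbf{b})$ and carry weight $1$. Forgetting the representation choice gives a canonical projection $P \mapsto \overline{P}$ from $P_S(C_i, D_j)$ to $P(C_i, D_j)$ whose fiber over a $\mathbb{Z}$-path consists of one independent binary choice per horizontal edge. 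On the tableau side, Definition~\ref{def:super-tableaux} exhibits $ST_{\theta_i \# \theta_j}(\mathbf{a}^{ij}, \mathbf{b}^{ij})$ as the disjoint union, over $\tilde{T}$ in $ZT_{\theta_i \# \theta_j}(\mathbf{a}^{ij}, \mathbf{b}^{ij})$, of all per-cell prime/unprime decorations of $\tilde{T}$.

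To construct $\psi_{ij}$, I would, given $P \in P_S(C_i, D_j)$, set $\tilde{Q} = \phi_{ij}(\overline{P}) \in ZT_{\theta_i \# \theta_j}(\mathbf{a}^{ij}, \mathbf{b}^{ij})$, which is well-defined and flag-compatible by Lemma~\ref{thm:bijection-path-ribbon}. For a cell $\zeta$ of content $c$ with $\tilde{Q}_\zeta = w$, the corresponding horizontal edge of $P$ has the form $(c, w+f_c) \to (c+1, w+f_c)$; I declare $\psi_{ij}(P)_\zeta = w$ when $P$ uses the $y_w$-representation and $\psi_{ij}(P)_\zeta = (w+c)'$ when $P$ uses the $-z_{w+c}$-representation. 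The underlying $\mathbb{Z}$-SSYT of $\psi_{ij}(P)$ is then precisely $\tilde{Q}$, so $\psi_{ij}(P) \in ST_{\theta_i \# \theta_j}(\mathbf{a}^{ij}, \mathbf{b}^{ij})$ and no additional flag verification is required beyond what Lemma~\ref{thm:bijection-path-ribbon} already supplies.

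The inverse is now immediate: from a super tableau $T$, produce its underlying $\mathbb{Z}$-SSYT $\tilde{T}$, pull back to $\overline{P} = \phi_{ij}^{-1}(\tilde{T})$, and lift $\overline{P}$ to a super-lattice path by taking the $y$-representation at horizontal edges corresponding to unprimed cells of $T$ and the $-z$-representation at those corresponding to primed cells. Weight preservation is then an edge-by-edge check: for unprimed $T_\zeta = w$ we match $wt(w) = y_w$ with edge weight $y_w$, for primed $T_\zeta = (w+c)'$ we match $wt((w+c)') = -z_{w+c}$ with edge weight $-z_{w+c}$, and vertical edges contribute factors of $1$ on both sides. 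The only delicate point is ensuring that the content-shift convention $\tilde{T}_\zeta = w \leftrightarrow T_\zeta = (w+c(\zeta))'$ of Definition~\ref{def:super-tableaux} lines up with the fact that the $-z$-representation of an edge at content $c$ carries weight $-z_{w+c}$; these are built to agree, so the bookkeeping is mechanical and the projection-plus-decoration decomposition completes the bijection.
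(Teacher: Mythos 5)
Your proposal is correct and takes essentially the same approach as the paper: both reduce to Lemma~\ref{thm:bijection-path-ribbon} by viewing a super-lattice path as a $\mathbb{Z}$-lattice path decorated with one binary choice per horizontal edge, viewing a super tableau as its index $\mathbb{Z}$-SSYT decorated with one binary choice per cell, and checking edge-by-edge that the weights $y_w$ and $-z_{w+c}$ match under the content-shift convention. The ``projection plus decoration'' framing you use is a slightly more explicit phrasing of the paper's ``representation'' language, but the content is identical.
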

\begin{proof}
    Let $P\in P_S(C_i, D_j)$ be a representation of a path 
    $P' \in P(C_i, D_j)$.  
    Let $Q' = \phi_{i,j}(P')$ and $Q = \psi_{i,j}(P)$. Then for all 
    contents $c \in [c(\delta_i), c(\gamma_j)]$: let an edge 
    $p'_c = (c, q + f_c) \rightarrow (c + 1, q + f_c)$ be in $P'$ for some $q$, and it follows that the 
    cell $\zeta$ of $Q'$ with  
    content $c$ contains the value $q$, i.e. $Q'_{\zeta} = q$. Let $p_c = (c, q + f_c) \rightarrow (c + 1, q + f_c) \in P$ be a representation of the edge $p'_c$ on the super lattice, then 
    by definition of $\psi_{i,j}$, we have $Q_{\zeta} = q$ if $wt(p_c) = y_q$, and $Q_{\zeta} = 
    q + c$ if $wt(p_c) = -z_{q + c}$. Notice that $wt(Q_{\zeta}) = wt(p_c)$. It follows that $Q$ is a representation 
    of $Q'$, and so $Q \in ST_{\theta_i \# \theta_j}(\mathbf{a}^{ij}, \mathbf{b}^{ij})$, and moreover $wt(P) = wt(Q)$.

    Now we define the inverse function $\psi_{i,j}^{-1}$. 
    Let $Q \in ST_{\theta_i \# \theta_j}(\mathbf{a}^{ij}, \mathbf{b}^{ij})$ be a 
    representation of $Q' \in ZT_{\theta_i \# \theta_j}(\mathbf{a}^{ij}, \mathbf{b}^{ij})$. Let $P' = \phi_{i,j}^{-1}(Q')$ 
    and we choose $P := \psi_{i,j}^{-1}(Q)$ to be a representation 
    of $P'$ such that $wt(P) = wt(Q)$. The path $P$ exists and 
    it is unique as 
    for any content $c \in [c(\delta_i), c(\gamma_j)]$, there 
    is a unique edge $p_c = (c, q + f_c) \rightarrow (c + 1, q + f_c)$ 
    with the weight $wt(p_c) = wt(Q_{\zeta})$ on  
    $SL_{\lambda/\mu}(\pi, \mathbf{a}, \mathbf{b})$, where 
    $\zeta$ is a cell in $\theta_i \# \theta_j$ with the content $c$ and  
    $q = Q'_{\zeta}$. Notice that $p_c$ is also a representation of an edge $p'_c = (c, q + f_c) \rightarrow (c + 1, q + f_c)$ inside $P'$, and so $\psi^{-1}_{i,j}(Q)$ is a representation of $P'$, which gives that $\psi^{-1}_{i,j}(\psi_{i,j}(P)) = P$.
\end{proof}

Let $\mathbf{P} = (P_1,...,P_k) \in SP_{\lambda/\mu}(\pi, \mathbf{a}, \mathbf{b})$.
We can convert 
every path $P_i$ into a super tableau $T_{\theta_i} = \psi(P_i)$ 
of r-shape $\theta_i$. Assembling all tableaux $T_{\theta_i}$ for all $i$ gives a filling of the r-shape $\lambda/\mu$. Let us define the map
$$
    \psi : SP_{\lambda/\mu}(\pi, \mathbf{a}, \mathbf{b}) \rightarrow ST_{\lambda/\mu}
$$
given by 
$$
    \psi: (P_1,...,P_k) \mapsto (\psi_{1,1}(P_1),...,\psi_{k,k}(P_k)) = T.
$$

\begin{lemma}
    \label{lem:bijection-bt-bl}
    The map 
    $$
    \psi : SP_{\lambda/\mu}(\pi, \mathbf{a}, \mathbf{b}) \rightarrow ST_{\lambda/\mu}
    (\mathbf{a}, \mathbf{b})
    $$
    is a weight-preserving bijection.
\end{lemma}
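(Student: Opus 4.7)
The plan is to build $\psi$ and its inverse ribbon-by-ribbon, reducing everything to the already established bijections $\phi_Z$ (Lemma~\ref{thm:bijection-super-tableau-non-intersecting-paths}) and $\psi_{i,i}$ (Lemma~\ref{lem:bt-path-bfr}), using the key observation that every edge of $SL_{\lambda/\mu}(\pi,\mathbf{a},\mathbf{b})$ sits over a unique edge of $ZL_{\lambda/\mu}(\pi,\mathbf{a},\mathbf{b})$. Consequently, for any path $P$ on the super lattice there is a well-defined underlying path $P'$ on the $\mathbb{Z}$-lattice obtained by forgetting weights, and two paths on the super lattice share a vertex if and only if their underlying $\mathbb{Z}$-lattice paths do. In particular, forgetting weights sends $SP_{\lambda/\mu}(\pi,\mathbf{a},\mathbf{b})$ into $ZP_{\lambda/\mu}(\pi,\mathbf{a},\mathbf{b})$, and the analogue of Lemma~\ref{lemma:bad-paths-intersect} noted in the preceding remark guarantees that a non-intersecting system on the super lattice must in fact have $P_i \in P_S(C_i, D_i)$ for all $i$.

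For the forward direction I would take $\mathbf{P}=(P_1,\ldots,P_k)\in SP_{\lambda/\mu}(\pi,\mathbf{a},\mathbf{b})$, let $\mathbf{P}'=(P_1',\ldots,P_k')$ be the underlying non-intersecting system on the $\mathbb{Z}$-lattice, and set $T':=\phi_Z(\mathbf{P}')\in ZT_{\lambda/\mu}(\mathbf{a},\mathbf{b})$. By Lemma~\ref{lem:bt-path-bfr}, each $\psi_{i,i}(P_i)$ is a flagged super tableau on $\theta_i$ that is a representation of $\phi_{i,i}(P_i')=T'_{\theta_i}$, with $wt(\psi_{i,i}(P_i))=wt(P_i)$. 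Since the ribbons $\theta_i$ partition $\lambda/\mu$, assembling these super tableaux yields a well-defined $T=\psi(\mathbf{P})$ which is itself a representation of the $\mathbb{Z}$-SSYT $T'$, hence lies in $ST_{\lambda/\mu}(\mathbf{a},\mathbf{b})$. Multiplicativity of $wt$ across the ribbons gives $wt(T)=\prod_i wt(\psi_{i,i}(P_i))=\prod_i wt(P_i)=wt(\mathbf{P})$.

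For the inverse, given $T\in ST_{\lambda/\mu}(\mathbf{a},\mathbf{b})$, let $T'\in ZT_{\lambda/\mu}(\mathbf{a},\mathbf{b})$ be the underlying flagged $\mathbb{Z}$-SSYT obtained by forgetting primes and content shifts. Apply $\phi_Z^{-1}$ to obtain a non-intersecting system $\mathbf{P}'=(P_1',\ldots,P_k')$, and then for each $i$ set $P_i:=\psi_{i,i}^{-1}(T_{\theta_i})$; by Lemma~\ref{lem:bt-path-bfr}, $P_i$ is the unique representation of $P_i'$ with weight $wt(T_{\theta_i})$. Because the resulting system has the same vertex set as $\mathbf{P}'$, it is non-intersecting and hence belongs to $SP_{\lambda/\mu}(\pi,\mathbf{a},\mathbf{b})$. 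Setting $\psi^{-1}(T):=(P_1,\ldots,P_k)$, the identities $\psi\circ\psi^{-1}=\mathrm{id}$ and $\psi^{-1}\circ\psi=\mathrm{id}$ follow ribbon-by-ribbon from the analogous identities for each $\psi_{i,i}$.

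The only real subtlety — and the step I expect to be the main obstacle to write down cleanly — is the compatibility between non-intersection on the super lattice and non-intersection on the $\mathbb{Z}$-lattice. One must argue carefully that since representations of a $\mathbb{Z}$-lattice edge occupy exactly the same pair of endpoints (only the weight label differs), the map $P\mapsto P'$ gives a bijection between super-lattice paths with prescribed endpoints and $\mathbb{Z}$-lattice paths with those same endpoints, so Lemma~\ref{lemma:bad-paths-intersect} transfers verbatim. Once this is pinned down, everything else is a direct composition of Lemmas~\ref{thm:bijection-super-tableau-non-intersecting-paths} and \ref{lem:bt-path-bfr}.
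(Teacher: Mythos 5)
Your proposal is correct and follows essentially the same route as the paper: both reduce to the ribbon-level bijections $\psi_{i,i}$ (Lemma~\ref{lem:bt-path-bfr}) and the $\mathbb{Z}$-lattice bijection $\phi_Z$ (Lemma~\ref{thm:bijection-super-tableau-non-intersecting-paths}), using that a super-lattice path has a unique underlying $\mathbb{Z}$-lattice path with the same vertex set, so non-intersection transfers between the two lattices. Your write-up simply makes explicit the ``representation''/``underlying path'' bookkeeping that the paper states tersely.
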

\begin{proof}
    Let $\mathbf{P}' = (P'_1, \ldots, P'_k) \in ZP_{\lambda/\mu}(\pi, \mathbf{a}, \mathbf{b})$ 
    and $T' = \phi(\mathbf{P}') \in ZT_{\lambda/\mu}(\mathbf{a}, \mathbf{b})$.
    
    Let $\mathbf{P} \in SP_{\lambda/\mu}(\pi, \mathbf{a}, \mathbf{b})$, and 
    $T = \psi(\mathbf{P}) = (T_{\theta_i})_{i \in [k]}$, 
    where $T_{\theta_i} = \psi_{i,i}(P_i)$. Since each 
    $T_{\theta_i}$ is a representation of $T'_{\theta_i}$, 
    and $(T'_{\theta_i})_i^k = T'$,  
    it follows that $T \in ST_{\lambda/\mu}(\mathbf{a}, \mathbf{b})$. 
    As $wt(P_i) = wt(T_{\theta_i})$ we have 
    $wt(\mathbf{P}) = wt(T)$.

    Conversely, let $T \in ST_{\lambda/\mu}( \mathbf{a}, \mathbf{b})$ be a representation of $T'$.
    We define the inverse map $\psi^{-1}$ as follows: 
    $\psi^{-1}(T) = (\psi^{-1}_{i, i}(T_{\theta_i}))_{i \in [k]} = \mathbf{P}$. 
    As $\psi^{-1}_{i,i}(T_{\theta_i})$ is a representation of 
    $P'_i$ for all $i \in [k]$ and $\mathbf{P}'$ is non-intersecting, it 
    follows that $\mathbf{P}$ is non-intersecting too. This 
    completes the proof.
\end{proof}

Next we consider the decomposition of the diagram into 
vertical ribbons, i.e. $\Theta = (\theta_1,..., \theta_{\lambda_1})$, 
where each $\theta_j$ is a $j$-th column of the diagram.
We write an enumerator for each path 
from $C_i$ to $D_j$ on $SL_{\lambda/\mu}(\pi, \mathbf{a}, \mathbf{b})$ in terms 
of the elementary supersymmetric functions $e_{n}$. Let $wt(C_i, D_j)$ be 
an enumerator for paths on $SL_{\lambda/\mu}(\pi, \mathbf{a}, \mathbf{b})$ 
from $C_i$ to $D_j$, i.e. 
$$ wt(C_i, D_j) := \sum_{P:C_i \rightarrow D_j} wt(P).$$

Let $T \in ST_{\lambda/\mu}( \pi, \mathbf{a}, \mathbf{b})$.
We define the \textit{index of $T_{ij}$} 
denoted by $\chi(T_{ij}, c)$ for $c=j-i$ as follows: for 
some integer $r$,
\[
\chi(T_{ij},c) = 
    \begin{cases}
        r \quad \text{if} \quad T_{ij}=r \\ 
        r - c \quad \text{if} \quad T_{ij}=r'
    \end{cases}
\]
We can restore the flagged $\mathbb{Z}$-SSYT $T'$ (taking $T'_{ij} = \chi(T_{ij})$ 
for all $i,j$), from which $T$ was derived. We 
call it the \textit{index tableau} of $T$ and denote it by $\chi(T)$.

Now we define an `inverse' of the weight function defined in eq. \eqref{eq:wt-func-bt-0}:  
\begin{equation}
    wt^{-1}(w) = 
    \begin{cases}
        r \quad \text{if} \quad w=y_r \\
        r' \quad \text{if} \quad w=-z_r
    \end{cases}
\end{equation}

\begin{lemma}
    \label{lem:vert-rib-enum}
    Let us decompose the r-shape $\lambda/\mu$ into vertical ribbons. Then on 
    $SL_{\lambda/\mu}(\pi, \mathbf{a}, \mathbf{b})$ we have 
    $$wt(C_i, D_j) = e_{\lambda'_i-\mu'_j-i+j}(\mathbf{y}_{a_j,b_i} / 
    \mathbf{z}_{a'_j,b'_i}).$$
\end{lemma}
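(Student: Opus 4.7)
The plan is to combine Lemma~\ref{lem:bt-path-bfr} with a direct enumeration of super tableaux on a single vertical strip. First I would analyze the combinatorics of the vertical ribbon decomposition. Under this decomposition, every cell of $\lambda/\mu$ goes up: interior cells have their upper neighbor in the same column, and the tail of each column, being on the top perimeter, also goes up. Consequently every entry of the pipe vector equals $(1,1)$, the cutting strip $\Theta(\lambda/\mu)$ is itself a single vertical column, and $\theta_i \# \theta_j$ is the sub-column spanning contents from $c(\delta_i) = i - \lambda'_i$ to $c(\gamma_j) = j - \mu'_j - 1$. Hence $\theta_i \# \theta_j$ is a single vertical strip of length $n := \lambda'_i - \mu'_j - i + j$; when $n < 0$ the ribbon is undefined and $e_n = 0$, when $n = 0$ it is empty and $e_0 = 1$, matching both sides.

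Next I would verify that the induced ribbon flags simplify to $\mathbf{a}^{ij} = (a_j)$, $\mathbf{b}^{ij} = (b_i)$, with conjugate row flags $\mathbf{a}'^{ij} = (a'_j)$, $\mathbf{b}'^{ij} = (b'_i)$. This reduces to two geometric observations: the rightmost cell of $\lambda/\mu$ with content $c(\delta_i)$ is $\delta_i = (\lambda'_i, i)$ itself, since any cell with smaller row and the same content would lie strictly outside $\lambda$; and the leftmost cell with content $c(\gamma_j)$ is $\gamma_j = (\mu'_j + 1, j)$, since any cell with smaller row and the same content would lie inside $\mu$. Substituting $\mathrm{col}(M^{ij}_1) = i$, $\mathrm{row}(M^{ij}_1) = \lambda'_i$, $\mathrm{col}(m^{ij}_1) = j$, $\mathrm{row}(m^{ij}_1) = \mu'_j + 1$ into the definition of induced ribbon flags yields the stated values.

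By Lemma~\ref{lem:bt-path-bfr}, it suffices to show that the weighted sum of super tableaux on a vertical strip of length $n$ with the flags above equals $e_n(\mathbf{y}_{a_j,b_i}/\mathbf{z}_{a'_j,b'_i})$. Using the expansion
\[
e_n(\mathbf{y}_{a_j,b_i}/\mathbf{z}_{a'_j,b'_i}) = \sum_{p+q=n} e_p(\mathbf{y}_{a_j,b_i}) \, h_q(-\mathbf{z}_{a'_j,b'_i}),
\]
it is enough to establish a weight-preserving bijection between super tableaux on the strip and pairs $(\mathbf{u}, \mathbf{v})$ with $\mathbf{u} = (u_1 < \cdots < u_p)$ in $[a_j, b_i]$ and $\mathbf{v} = (v_1 \le \cdots \le v_q)$ in $[a'_j, b'_i]$. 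Given a super tableau with contents $c_1 > c_2 > \cdots > c_n$ read top to bottom, collecting unprimed entries yields $\mathbf{u}$, strictly increasing by semistandardness of $\tilde{T}$; collecting the displayed primed values yields $\mathbf{v}$, weakly increasing because consecutive primed cells $l_r < l_{r+1}$ satisfy $v_{r+1} - v_r = (\tilde{T}_{l_{r+1}} - \tilde{T}_{l_r}) - (l_{r+1} - l_r) \ge 0$. The weights factor accordingly as $\prod y_{u_k} \cdot \prod (-z_{v_k})$.

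The main obstacle is the inverse map: reconstructing a unique valid interleaving from a pair $(\mathbf{u}, \mathbf{v})$. I would process the cells from top to bottom and, at each step, decide whether the cell is unprimed (consuming the next $u$) or primed (consuming the next $v$) so that the resulting $\tilde{T}$ value is strictly larger than the previous one. The key calculation is that, for the top cell, the two candidate values $u_1$ and $v_1 - c_1$ differ from each other by at most as much as $c_1 - c_2 = 1$, which forces at least one of them to give a valid choice; an inductive argument on $n$, using that consecutive contents in a vertical strip differ by exactly $1$, then shows that the algorithm always succeeds and produces a super tableau respecting the flag bounds $[a_j,b_i]$ for the underlying $\tilde{T}$ and $[a'_j,b'_i]$ for the displayed primed values. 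Summing the weights over all such pairs gives precisely $e_n(\mathbf{y}_{a_j,b_i}/\mathbf{z}_{a'_j,b'_i})$, completing the proof.
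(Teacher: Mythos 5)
Your overall strategy mirrors the paper's: reduce via Lemma~\ref{lem:bt-path-bfr} to counting super tableaux on a single vertical strip with flags $(a_j),(b_i)$, then match these against the monomials of $e_n(\mathbf{y}_{a_j,b_i}/\mathbf{z}_{a'_j,b'_i})$ by a greedy reconstruction. The setup (vertical decomposition, identification of $\theta_i\#\theta_j$, simplification of the induced ribbon flags, and the splitting of unprimed/primed entries into a strictly increasing $\mathbf{u}$ and weakly increasing $\mathbf{v}$ with the correct bounds) is all correct and matches what the paper does, modulo that the paper packages the monomial directly as a multiset $U$ rather than separating it into $(\mathbf{u},\mathbf{v})$ via $e_n = \sum_p e_p h_{n-p}$.

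The gap is in the inverse map. Your ``key calculation'' — that the candidate values $u_1$ and $v_1 - c_1$ at the top cell ``differ from each other by at most as much as $c_1 - c_2 = 1$'' — is false. For instance with $a_j = 0$, $b_i = 2$, top content $c_1 = 1$, one can have $u_1 = 2$ and $v_1 = 1$, giving candidates $2$ and $0$, which differ by $2$. And the conclusion you draw from it (``which forces at least one of them to give a valid choice'') is vacuous at the top cell, where there is no previous value and both choices are trivially admissible; the real issue is precisely that \emph{both} may be admissible, in which case your stated rule (``decide \dots so that the resulting $\tilde T$ value is strictly larger than the previous one'') does not determine the choice, so the inverse map is not yet well-defined. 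To repair this you would need to (i) commit to a deterministic tie-break (e.g.\ ``always take the smaller candidate; prefer unprimed when tied,'' the top-to-bottom mirror of the paper's ``bottom-to-top maximal index, prefer unprimed''), (ii) prove that this greedy rule always terminates with a flag-respecting super tableau that consumes $\mathbf{u}$ and $\mathbf{v}$ exactly, and (iii) prove uniqueness — that no other interleaving of $\mathbf{u},\mathbf{v}$ yields a semistandard filling. The paper handles (iii) by a direct contradiction argument on the first place two interleavings disagree; your sketch stops short of this and instead relies on the false numerical claim. So the route is the right one, but as written the argument does not close.
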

\begin{proof}
    By Lemma~\ref{lem:bt-path-bfr} it follows that there is 
    a weight preserving bijection between paths $P(C_i, D_j)$ and 
    flagged super tableaux of r-shape $(1^{\lambda'_i-\mu'_j-i+j}, i - \lambda'_i)$ 
    with flags $\mathbf{a}^{ij}=(a_j), \mathbf{b}^{ij}=(b_i)$.  
    Thus, we aim to show that for every term 
    $w = w_1 \cdots w_{\lambda'_i - i -\mu'_j + j}$ in r.h.s. there is a 
    unique flagged super tableau $Q$ of r-shape 
    $\theta_i\# \theta_j = (1^{\lambda'_i-\mu'_j-i+j}, i - \lambda'_i)$ 
    with flags $\mathbf{a}^{ij}=(a_j), \mathbf{b}^{ij}=(b_i)$ of weight $w$.
    
    Let $U=\{t_1,...,t_{\lambda'_i-\mu'_j-i+j}\}$ be a multiset 
    where $t_i = wt^{-1}(w_i)$. Then we construct $Q$ starting from the bottommost cell. We proceed as follows: for content $c\in [i-\lambda'_i, j-(\mu'_j+1)]$ 
    starting from $c = i-\lambda'_i$ and proceeding in increasing  order, let $\tau_c$ be an element in $U$ with maximal index, i.e. 
    $\tau_c = \argmax\limits_{t_i\in U}\chi(t_i, c)$. There might be several elements $s_1,\ldots, s_k \in U$ with maximal index but only one of them can be unprimed (as unprimed elements correspond to variables from the alphabet $\mathbf{y}$ in $e_n(\mathbf{y}/\mathbf{z})$), others are primed and all primed elements are the same. If there is an unprimed element with maximal index, we choose $\tau_c$ as this element. Otherwise $s_1 = \ldots = s_k = r'$, for some $r$, and we choose $\tau_c = r'$. Next we remove $\tau_c$ from $U$, i.e. update $U \to U \setminus \{\tau_c\}$. We fill the cell of content $c$ with the value $\tau_c$. 
    
    At the end of the described process we obtain some tableau $Q$ filled with primed and unprimed integers. Notice that $b_i \ge \chi(\tau_c,c) > \chi(\tau_{c-1}, c - 1) \ge a_j$ for all $c$, which proves that the $Q$ is semistandard, and it respects the flags $\mathbf{a}^{ij}, \mathbf{b}^{ij}$, thus $Q \in ST_{\theta_i \# \theta_j}(\mathbf{a}^{ij}, \mathbf{b}^{ij})$, and also note that $wt(Q) = w$. Now we show that $Q$ is unique. 
    If in addition to the tableau $Q = (q_{i-\lambda'_i},\ldots,q_{j-(\mu'_j+1)})$ 
    constructed by the algorithm above, there 
    is another super tableau $R  = (r_{i-\lambda'_i},\ldots,r_{j-(\mu'_j+1)})$ 
    with the same weight,
    where $q_c, r_c$ are values inside the cells with content $c$ 
    of $Q$ and $R$. Then there must be an integer $\ell$, such that $q_k = r_k$ for all $k < \ell$ and $q_{\ell} \neq r_{\ell}$, which means that on $\ell$-th step of the algorithm $r_{\ell}$ was chosen instead of $q_{\ell}$ and there are two possibilities: (1) $\chi(r_{\ell}, l) < \chi(q_{\ell}, \ell)$, and (2) $\chi(r_{\ell}, \ell) = \chi(q_{\ell}, \ell)$ but $q_{\ell}$ is unprimed. 
    
    In (1), as $q_{\ell}$ stays upper 
    $r_{\ell}$ in $R$, it follows that $\chi(q_{\ell}, \ell) > \chi(r_{\ell}, \ell) \ge 
    \chi(q_{\ell}, \ell + p) + p$, for some positive $p$, but then 
    $\chi(q_{\ell}, \ell) > \chi(q_{\ell}, \ell + p) + p$ which is not true, and we get a contradiction.

    In (2), again, it follows that $\chi(r_{\ell}, \ell) 
    \ge \chi(q_{\ell}, \ell + p) + p$ for some positive $p$, 
    but as $q_{\ell}$ is unprimed, it follows that 
    $\chi(q_{\ell}, \ell) \ge \chi(q_{\ell}, \ell) + p$, which is a contradiction. 
    
    Thus, $Q$ is unique. 

    Now we need to show that the weight of every flagged super 
    tableau is taken into account in the r.h.s. From 
    the definition of flagged super tableaux, the weight 
    of $Q$ will be of the form:
    $$y_{p_1}...y_{p_{\alpha}}(-z_{q_1})...(-z_{q_{\beta}})$$
    where  $\alpha+\beta=\lambda'_i-\mu'_j-i+j$ 
    with $p_i,q_i$ s.t. 
    \[b_i\ge p_1>...> p_{\alpha}\ge a_j\] 
    \[b'_i\ge q_1\ge...\ge q_{\beta}\ge a'_j\] 
    and every term is in 
    $e_{\lambda'_i-\mu'_j-i+j}(\mathbf{y}_{a_j,b_i} / 
    \mathbf{z}_{a'_j,b'_i})$ by definition,
    which completes the proof.
\end{proof}

%Now we prove the theorem \ref{thm:signed-expansion}. 
\begin{proof}[Proof of Theorem \ref{thm:signed-expansion}]
    We decompose the diagram into vertical ribbons.  
    Since there is a weight preserving bijection between flagged super tableaux 
    from $ST_{\lambda/\mu}(\mathbf{a}, \mathbf{b})$ and systems of non-intersecting paths $\mathbf{P}$ on  
    $SL_{\lambda/\mu}(\pi, \mathbf{a}, \mathbf{b})$ (Lemma \ref{lem:bijection-bt-bl}) it 
    follows that 
    \[
        \sum_{T \in ST_{\lambda/\mu}(\mathbf{a}, \mathbf{b})} wt(T) = \sum_{\mathbf{P}}wt(\mathbf{P}).
    \]
    Using the LGV lemma and the enumerator of paths $P(C_i,D_j)$
    from Lemma $\ref{lem:vert-rib-enum}$, we obtain   
    \[
        \sum_{\mathbf{P}}wt(\mathbf{P}) = \det \left[e_{\lambda'_i-\mu'_j-i+j}(\mathbf{y}_{a_j,b_i} / 
    \mathbf{z}_{a'_j,b'_i})\right]_{i,j}^{\lambda_1},
    \]
    which is $\mathsf{S}^{\mathbf{a}, \mathbf{b}}_{\lambda/\mu}(\mathbf{y} / \mathbf{z})$ 
    by definition.
\end{proof}

%\subsection{Hamel--Goulden formulas}
%In this subsection we prove the theorem \ref{thm:HG-1}.

\begin{proof}[Proof of Theorem \ref{thm:HG-1}]
    Consider the systems of non-intersecting paths on the super lattice $SL_{\lambda/\mu}(\pi, \mathbf{a}, \mathbf{b})$. On one hand, by Theorem~\ref{thm:signed-expansion} and Lemma~\ref{lem:bijection-bt-bl} it follows that $\mathsf{S}_{\lambda/\mu}^{\mathbf{a}, \mathbf{b}}
    (\mathbf{y} / \mathbf{z})$ is enumerator for the systems of non-intersecting paths. On the other hand, by Lemma~\ref{lem:bt-path-bfr}, theorem~\ref{thm:signed-expansion} it follows that $\mathsf{S}^{\mathbf{a}^{ij}, \mathbf{b}^{ij}}_{\theta_i\# \theta_j}
    (\mathbf{y} / \mathbf{z})$ is enumerator of paths 
    from $C_i$ to $D_j$, for all $i,j$. Thus, using the LGV lemma the result follows.
\end{proof}

\section{Some special decompositions}\label{sec:special}
In this section we show some new formulas for special cases of outer ribbon 
decompositions: Jacobi-Trudi and skew Giambelli type formulas. %for column  flagged supersymmetric Schur function.

\subsection{Jacobi-Trudi-type formulas}
%The column flagged supersymmetric Schur function $\mathsf{S}^{\mathbf{a},\mathbf{b}}_{\lambda/\mu}(\mathbf{y} / \mathbf{z})$ is defined by the dual Jacobi-Trudi formula.
Let $n \in \mathbb{Z}_{>0}$, $c \in \mathbb{Z}$ and $\mathbf{p, q}$ be column flags for the r-shape $(n, c)$ (single row with shifted content $c$). 
Let us denote 
\begin{align*}
    h^{\mathbf{p, q}}_{n, c}(\mathbf{y} / \mathbf{z}) := \mathsf{S}^{\mathbf{p, q}}_{(n)}(\mathbf{y} / \mathbf{z}),
\end{align*}
where the polynomials on both r.h.s. are indexed by the r-shape $(n, c)$.
From Theorem~\ref{thm:signed-expansion} we obtain the following formula: 
\begin{align*}
    h^{\mathbf{p, q}}_{n, c}(\mathbf{y} / \mathbf{z}) = 
    \sum_{{\substack{i_1 \le ... \le i_n \\ p_k \le i_k \le q_k,\ k\in [n]}}}^{n}\prod_{r=1}^n(y_{i_r}-z_{i_r + c + r - 1}).
\end{align*}

\begin{corollary}[Jacobi--Trudi-type formulas]
Let %$\lambda/\mu$ be a skew shape and 
$\mathbf{a, b}$ be some flags for (the {usual shape}) $\lambda/\mu$. The following formulas hold: 
    \begin{align*}        
        \mathsf{S}^{\mathbf{a},\mathbf{b}}_{\lambda/\mu}(\mathbf{y} / \mathbf{z}) = 
        \det \left[h^{\mathbf{a}^{ij},\mathbf{b}^{ij}}
        _{\lambda_i-\mu_j-i+j,\, \mu_j + 1 - j}(\mathbf{y} / \mathbf{z}) \right]_{1 \le i,j \le n}, 
    \end{align*}
    where $n \ge \ell(\lambda)$  and $\mathbf{a}^{ij}, \mathbf{b}^{ij}$ are induced ribbon flags for the r-shape $(\lambda_i-\mu_j-i+j, \mu_j + 1 - j)$. %$\lambda_i-\mu_j-i+j$. 
\end{corollary}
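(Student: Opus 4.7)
The plan is to apply Theorem~\ref{thm:HG-1} to the outer ribbon decomposition $(\theta_1, \ldots, \theta_n)$ in which each $\theta_i$ is the $i$-th row of $\lambda/\mu$. First I would verify this is indeed a valid outer decomposition: each nonempty row is a horizontal strip, hence a ribbon (no $2 \times 2$ block); its head $(i, \mu_i + 1)$ lies on the left perimeter and its tail $(i, \lambda_i)$ lies on the right perimeter of $\lambda/\mu$. Since every cell of $\lambda/\mu$ belongs to such a horizontal row ribbon, every cell \emph{goes right} under this decomposition, so the cutting strip $\Theta(\lambda/\mu)$ is itself a single horizontal strip.

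Using this, I would compute $\theta_j \# \theta_i = \Theta(c(\delta_j), c(\gamma_i))$, where the head $\delta_j$ of $\theta_j$ has content $\mu_j + 1 - j$ and the tail $\gamma_i$ of $\theta_i$ has content $\lambda_i - i$. Because the cutting strip is horizontal, $\theta_j \# \theta_i$ is a single row of length $\lambda_i - \mu_j - i + j$, viewed as an r-shape with root content $\mu_j + 1 - j$. By the definition $h^{\mathbf{p},\mathbf{q}}_{n,c}(\mathbf{y}/\mathbf{z}) := \mathsf{S}^{\mathbf{p},\mathbf{q}}_{(n)}(\mathbf{y}/\mathbf{z})$ for the r-shape $(n, c)$, this identifies
$$\mathsf{S}^{\mathbf{a}^{ij}, \mathbf{b}^{ij}}_{\theta_j \# \theta_i}(\mathbf{y}/\mathbf{z}) = h^{\mathbf{a}^{ij}, \mathbf{b}^{ij}}_{\lambda_i - \mu_j - i + j,\, \mu_j + 1 - j}(\mathbf{y}/\mathbf{z}),$$
where $\mathbf{a}^{ij}, \mathbf{b}^{ij}$ denote the induced ribbon flags for the r-shape $\theta_j \# \theta_i$, matching the corollary's labeling. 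Applying Theorem~\ref{thm:HG-1} to the chosen decomposition and taking the transpose of the resulting matrix (which does not alter its determinant) then yields exactly the claimed identity.

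The main bookkeeping challenge is the index convention: the corollary's $(i,j)$-entry corresponds to $\theta_j \# \theta_i$ rather than $\theta_i \# \theta_j$ from Theorem~\ref{thm:HG-1}'s matrix, so one must match the flag labels accordingly, and the row length $\lambda_i - \mu_j - i + j$ appears instead of $\lambda_j - \mu_i - j + i$ from a direct read-off. The degenerate cases are handled automatically by the conventions of Theorem~\ref{thm:HG-1}: if $\lambda_i - \mu_j - i + j = 0$, then $\theta_j \# \theta_i = \varnothing$ and the entry is $1$; if $\lambda_i - \mu_j - i + j < 0$, then $\theta_j \# \theta_i$ is undefined and the entry is $0$. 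For $n > \ell(\lambda)$, one sets $\lambda_i = \mu_i = 0$ for $i > \ell(\lambda)$, and the extra diagonal $h^{\cdot,\cdot}_{0,c} = 1$ entries with a block-triangular structure preserve the determinant, giving the full formula for any $n \ge \ell(\lambda)$.
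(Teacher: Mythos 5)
Your proposal is correct and follows essentially the same approach as the paper's proof: decompose into horizontal (row) ribbons, observe that $\theta_j \# \theta_i$ is a single row of length $\lambda_i - \mu_j - i + j$ with root content $\mu_j + 1 - j$, apply Theorem~\ref{thm:HG-1}, and transpose. You add useful detail the paper omits (verifying the row decomposition is a valid outer decomposition, tracking the $\theta_j \# \theta_i$ versus $\theta_i \# \theta_j$ index swap under transposition, and sketching why the $n > \ell(\lambda)$ padding is harmless), but the core argument is identical.
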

\begin{proof}
    Let us decompose the r-shape $\lambda/\mu$ into horizontal ribbons. Taking $\theta_i = (\lambda_i - \mu_i, \mu_i + 1 - i)$, we have $\theta_j \# \theta_i = (\lambda_i - \mu_j - i + j, \mu_j + 1 - j)$. Then the result follows from Theorem~\ref{thm:HG-1} after transposing the determinant.
\end{proof}

%Now, let $1^n = (1^n, c)$ be some r-shape. 
Let $\mathbf{p}' = (p'_i), \mathbf{q}' = (q'_i)$ be some row flags for the r-shape $(1^n, c)$. Let 
\begin{align*}
    e^{\mathbf{p}', \mathbf{q}'}_{n, c}(\mathbf{y} / \mathbf{z}) := \overline{\mathsf{S}}^{\mathbf{p}', \mathbf{q}'}_{1^n}(\mathbf{y} / \mathbf{z}) 
\end{align*}
where the polynomials on r.h.s. are indexed by the r-shape $(1^n, c)$.
From Corollary~\ref{cor:signed-expansion-dual} we obtain the following formula: 
\begin{align*}
    e^{\mathbf{p}', \mathbf{q}'}_{n, c}(\mathbf{y} / \mathbf{z}) = 
    \sum_{{\substack{i_1 < ... < i_n \\ p'_k \le i_k \le q'_k,\ k\in [n]}}}^{n}\prod_{r=1}^n(y_{i_r}-z_{i_r + c + n - r}).
\end{align*}
\begin{corollary}[Dual Jacobi--Trudi-type formulas]
Let $\mathbf{a}', \mathbf{b}'$ be some row flags for ({the usual shape}) $\lambda/\mu$. The following formulas hold:  
\begin{align*}
    \overline{\mathsf{S}}^{\mathbf{a}', \mathbf{b}'}_{\lambda/\mu}(\mathbf{y} / \mathbf{z}) = \det \left[e^{\mathbf{a}'^{ij},\mathbf{b}'^{ij}}_{\lambda'_i-\mu'_j-i+j,\, i - \lambda'_i}(\mathbf{y} / \mathbf{z}) \right]_{1 \le i,j \le n},
\end{align*}
where $n \ge \ell(\lambda)$ and  $\mathbf{a}^{ij}, \mathbf{b}^{ij}$ are induced row ribbon flags for the r-shape $(\lambda'_i - \mu'_j - i + j,\ i - \lambda'_i)$.
\end{corollary}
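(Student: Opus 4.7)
The plan is to decompose $\lambda/\mu$ into its columns as an outer ribbon decomposition of vertical ribbons, and then apply Corollary~\ref{thm:HG-1-dual}. Concretely, I would take each $\theta_i$ to be the full $i$-th column of $\lambda/\mu$, which viewed as an r-shape is $(1^{\lambda'_i - \mu'_i},\ i - \lambda'_i)$: its head sits at $(\lambda'_i, i)$ with content $i - \lambda'_i$, and its tail at $(\mu'_i + 1, i)$. Every internal cell of a column has its upper neighbor in the same ribbon, and the tail of each column lies on the top perimeter of $\lambda/\mu$, so every cell ``goes up''; hence the cutting strip $\Theta(\lambda/\mu)$ is itself a single vertical column.

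Given this cutting strip, the ribbon $\theta_i \# \theta_j$ consists of cells whose contents lie in the interval $[c(\delta_i),\ c(\gamma_j)] = [i - \lambda'_i,\ j - \mu'_j - 1]$, so
\begin{align*}
\theta_i \# \theta_j = (1^{\lambda'_i - \mu'_j - i + j},\ i - \lambda'_i).
\end{align*}
It degenerates to $\varnothing$ exactly when $\lambda'_i - \mu'_j - i + j = 0$ and is undefined when this quantity is negative, matching the conventions in Corollary~\ref{thm:HG-1-dual}.

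Applying Corollary~\ref{thm:HG-1-dual} to this decomposition then gives
\begin{align*}
\overline{\mathsf{S}}^{\mathbf{a}', \mathbf{b}'}_{\lambda/\mu}(\mathbf{y}/\mathbf{z})
= \det\left[\overline{\mathsf{S}}^{\mathbf{a}'^{ij},\mathbf{b}'^{ij}}_{\theta_i\#\theta_j}(\mathbf{y}/\mathbf{z})\right]_{1 \le i,j \le n},
\end{align*}
and since each $\theta_i \# \theta_j$ is a single vertical column, the $(i,j)$ entry is by the very definition of $e^{\mathbf{p}',\mathbf{q}'}_{n,c}(\mathbf{y}/\mathbf{z})$ equal to $e^{\mathbf{a}'^{ij},\mathbf{b}'^{ij}}_{\lambda'_i-\mu'_j-i+j,\ i-\lambda'_i}(\mathbf{y}/\mathbf{z})$, which yields the claim.

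The only point requiring a moment of care — and the mildest ``obstacle'' — is the bookkeeping around the induced row ribbon flags: one must notice that the up canonical decomposition of a single vertical column is the tautological decomposition into its individual one-cell horizontal strips, so the flags $\mathbf{a}'^{ij}, \mathbf{b}'^{ij}$ produced by the definition preceding Corollary~\ref{thm:HG-1-dual} match exactly the flag data appearing in the definition of $e^{\mathbf{p}',\mathbf{q}'}_{n,c}$. With this observation the proof is a direct column-wise dualization of the Jacobi--Trudi corollary just above, and no further technical input is needed.
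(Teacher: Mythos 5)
Your proof is correct and follows exactly the same route as the paper's, which simply says: decompose $\lambda/\mu$ into vertical ribbons (columns) and apply Corollary~\ref{thm:HG-1-dual}. You have merely made explicit the computations (the cutting strip is a single column, $\theta_i\#\theta_j = (1^{\lambda'_i-\mu'_j-i+j},\ i-\lambda'_i)$, and the up canonical decomposition is into one-cell horizontal strips) that the paper leaves implicit.
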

\begin{proof}
    Let us decompose the r-shape $\lambda/\mu$ into vertical ribbons, then 
    the result follows from Corollary~\ref{thm:HG-1-dual}.
\end{proof}
\subsection{Skew Giambelli-type formula}
We need \textit{Frobenius notation} to write Giambelli formulas. 
For a partition $\lambda$, let $(k, k)$ be the rightmost diagonal 
cell in $\lambda$, then $k$ is said to be the size of the 
\textit{Durfee square}. Let $u_i$ and $v_i$ be the number of cells 
to the right of the cell $(i,i)$ and below $(i,i)$ for $i \in [k]$. 
Then we can write $\lambda = (u_1,...,u_k|v_1,...,v_k)$ known as Frobenius notation. 
Let $\mu = (c_1,...,c_{\ell}|d_1,...,d_{\ell})$ be also written in Frobenius notation. 
We denote hooks by $(u_j|v_i)$. Let 
$\mathbf{a, b}$ be flags for the r-shape $\lambda/\mu$, and 
$\Theta = (\theta_1, \ldots, \theta_{k+\ell})$ be an outer decomposition of  
$\lambda/\mu$ into hooks and strips as in Fig.~\ref{pic:hook-decomposition}. In this decomposition 
we have $k + \ell$ ribbons of three types: vertical, horizontal 
ribbons, and hooks:
$$
\begin{cases}
    \Theta(-v_i, u_i)\quad \text{for}\quad i \in [\ell+1, k] \qquad &\text{hooks}, \\
    \Theta(-v_i, -d_i - 1)\quad \text{for}\quad i \in [\ell] \qquad &\text{vertical ribbons}, \\
    \Theta(c_i + 1, u_i)\quad \text{for}\quad i \in [\ell] \qquad &\text{horizontal ribbons}.
\end{cases}
$$

\begin{figure}%[!h]
    \centering
\begin{tikzpicture}[scale=0.4]

% Shape: (8,8,7,7,6,6,5,3) / (4,4,3,1)
% Row 1 (top): 8 boxes, remove 4 -> boxes 5..8
\foreach \x in {4,5,6,7} {
  \draw (\x,7) rectangle (\x+1,8);
}

% Row 2: 8 boxes, remove 4 -> boxes 5..8
\foreach \x in {4,5,6,7} {
  \draw (\x,6) rectangle (\x+1,7);
}

% Row 3: 7 boxes, remove 3 -> boxes 4..7
\foreach \x in {3,4,5,6} {
  \draw (\x,5) rectangle (\x+1,6);
}

% Row 4: 7 boxes, remove 1 -> boxes 2..7
\foreach \x in {1,2,3,4,5,6} {
  \draw (\x,4) rectangle (\x+1,5);
}

% Row 5: 6 boxes, remove 0 -> boxes 1..6
\foreach \x in {0,1,2,3,4,5} {
  \draw (\x,3) rectangle (\x+1,4);
}

% Row 6: 6 boxes, remove 0 -> boxes 1..6
\foreach \x in {0,1,2,3,4,5} {
  \draw (\x,2) rectangle (\x+1,3);
}

% Row 7: 5 boxes, remove 0 -> boxes 1..5
\foreach \x in {0,1,2,3,4} {
  \draw (\x,1) rectangle (\x+1,2);
}

% Row 8: 3 boxes, remove 0 -> boxes 1..3
\foreach \x in {0,1,2} {
  \draw (\x,0) rectangle (\x+1,1);
}

\draw [red, very thick] (0.5, 0) -- (0.5, 4);
\draw [red, very thick] (1.5, 0) -- (1.5, 5);
\draw [red, very thick] (2.5, 0) -- (2.5, 5);
\draw [red, very thick] (3, 5.5) -- (7, 5.5);
\draw [red, very thick] (4, 6.5) -- (8, 6.5);
\draw [red, very thick] (4, 7.5) -- (8, 7.5);
\draw [red, very thick] (3.5, 1) -- (3.5, 4.5) -- (7, 4.5);
\draw [red, very thick] (4.5, 1) -- (4.5, 3.5) -- (6, 3.5);
\draw [red, very thick] (5.5, 2) -- (5.5, 2.5) -- (6, 2.5);

\end{tikzpicture}
\caption{Ribbon decomposition into hooks and strips.}
\label{pic:hook-decomposition}
\end{figure}
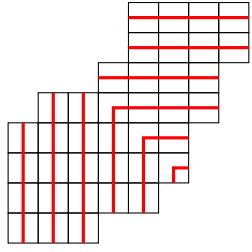

We specify the ribbons we get using $\#$ operation 
for ribbons in $\Theta$:
\[
\begin{cases}
    \Theta(-v_i,u_j) = (u_j|v_i) 
    \quad & i, j\in [k], \\[0.5em]
    \Theta(-v_i, -d_j - 1)  = 1^{v_i - d_j}
    \quad & i \in [k]\ , j\in [\ell], \\[0.5em]
    \Theta(c_i + 1, u_j) = u_j - c_i
    \quad & i \in [\ell]\ , j\in [k], \\[0.5em]
    \Theta(c_i + 1, -d_j - 1) = undefined 
    \quad & i \in [\ell]\ , j\in [\ell].
\end{cases}
\]
Let us define the following ribbon flags:
\begin{itemize}
    \item $\mathbf{a}^{ij} = (a^{ij}_r)_r, \mathbf{b}^{ij} = (b^{ij}_r)_r$ 
    for hooks $(u_j|v_i)$, $i,j\in [k]$
    \item $\mathbf{p}^{ij} = (p^{ij}_r)_r,\mathbf{q}^{ij} = (q^{ij}_r)_r$
    for vertical ribbons $1^{v_i - d_j}$, $i \in [k]\ , j\in [\ell]$
    \item $\mathbf{\tilde{p}}^{ij} = (\tilde{p}^{ij}_r)_r,\mathbf{\tilde{q}}^{ij} = (\tilde{q}^{ij}_r)_r$ 
    for horizontal ribbons $u_j - c_i$, $i \in [\ell]\ , j\in [k]$
\end{itemize}

\begin{corollary}[Skew Giambelli formulas for flagged supersymmetric Schur functions]
The following formulas hold: 
    \label{cor:Giambelli-main}
    \[
    \mathsf{S}^{\mathbf{a}, \mathbf{b}}_{\lambda/\mu}(\mathbf{y} / \mathbf{z}) 
    = (-1)^{\ell} \det \begin{bmatrix}
    \bigl[\mathsf{S}^{\mathbf{a}^{ij}, \mathbf{b}^{ij}}_{u_j \mid v_i}\bigr]_{\begin{subarray}{l}
            i,j \in [k] 
        \end{subarray}} 
    & \bigl[\mathsf{S}^{\mathbf{p}^{ij},\mathbf{q}^{ij}}_{1^{v_i - d_j}}\bigr]_{\begin{subarray}{l}
            i \in [k] \\ j \in [\ell] 
        \end{subarray}} \\\\
    \bigl[\mathsf{S}^{\mathbf{\tilde{p}}^{ij},\mathbf{\tilde{q}}^{ij}}_{u_j - c_i}\bigr]_{\begin{subarray}{l}
            i \in [\ell] \\ j \in [k] 
        \end{subarray}} 
    & \mathbf{0}
\end{bmatrix}.
    \]
\end{corollary}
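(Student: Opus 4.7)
The plan is to apply Theorem~\ref{thm:HG-1} to the specific outer ribbon decomposition $\Theta$ of $\lambda/\mu$ shown in Fig.~\ref{pic:hook-decomposition}, consisting of $k+\ell$ ribbons: the $\ell$ vertical strips $\Theta(-v_i,-d_i-1)$ for $i\in[\ell]$, the $k-\ell$ hooks $\Theta(-v_i,u_i)$ for $i\in[\ell+1,k]$, and the $\ell$ horizontal strips $\Theta(c_i+1,u_i)$ for $i\in[\ell]$. I will list these ribbons in the above order, so that ribbon $r$ has head content $-v_r$ for $r\in[k]$ and head content $c_{r-k}+1$ for $r\in[k+1,k+\ell]$, with tail content $-d_r-1$, $u_r$, or $u_{r-k}$, respectively.

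A direct application of Theorem~\ref{thm:HG-1} then expresses $\mathsf{S}^{\mathbf{a},\mathbf{b}}_{\lambda/\mu}(\mathbf{y}/\mathbf{z})$ as the determinant of a $(k+\ell)\times(k+\ell)$ matrix $M$ whose $(i,j)$-entry is $\mathsf{S}^{\mathbf{a}^{ij},\mathbf{b}^{ij}}_{\theta_i\#\theta_j}(\mathbf{y}/\mathbf{z})$. Using the four explicit identities for $\theta_i\#\theta_j$ listed just before the corollary, I can read off each entry by pairing the head of $\theta_i$ with the tail of $\theta_j$: entries in rows $[k]$ are hooks $(u_j\mid v_i)$ whenever the column's tail is some $u_j$ and vertical strips $1^{v_i-d_j}$ whenever the tail is $-d_j-1$, while entries in rows $[k+1,k+\ell]$ are horizontal strips $u_j-c_{i-k}$ when the tail is some $u_j$ and undefined (hence $0$) when the tail is $-d_j-1$. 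In particular, the $\ell\times\ell$ bottom-right block of $M$ is already a zero block.

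To match the block form in the statement I apply a column permutation only, namely $\sigma=\prod_{j=1}^{\ell}(j,\,k+j)$, which for each $j\in[\ell]$ swaps the vertical-strip column of $M$ (tail $-d_j-1$) with the horizontal-strip column (tail $u_j$). After the swap, new column $j$ for $j\in[k]$ has tail $u_j$ and new column $k+j$ for $j\in[\ell]$ has tail $-d_j-1$, which is precisely the column ordering prescribed by the corollary. No row permutation is needed, since the head ordering $-v_1,\ldots,-v_k,c_1+1,\ldots,c_\ell+1$ of $M$ is already the one demanded by the statement. Because $\sigma$ is a product of $\ell$ disjoint transpositions, $\mathrm{sgn}(\sigma)=(-1)^\ell$, which accounts exactly for the sign factor.

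The main bookkeeping step is to verify that the induced ribbon flags produced by Theorem~\ref{thm:HG-1} for each type of entry coincide, after the relabelling by $\sigma$, with the flags $\mathbf{a}^{ij},\mathbf{b}^{ij}$ (for hooks), $\mathbf{p}^{ij},\mathbf{q}^{ij}$ (for vertical strips), and $\tilde{\mathbf{p}}^{ij},\tilde{\mathbf{q}}^{ij}$ (for horizontal strips) introduced just before the corollary. This reduces to a direct substitution into the definition of induced ribbon flags, tracking the positions $\mathrm{col}(m^{ij}_r)$, $\mathrm{col}(M^{ij}_r)$, $\mathrm{row}(m^{ij}_r)$, and $\mathrm{row}(M^{ij}_r)$ inside each hook, vertical strip, and horizontal strip with respect to $\mathbf{a},\mathbf{b}$; this bookkeeping is routine but is the most error-prone part of the argument.
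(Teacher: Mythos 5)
Your proof is correct and follows the same route as the paper: decompose $\lambda/\mu$ into hooks, vertical strips, and horizontal strips as in Fig.~\ref{pic:hook-decomposition}, apply Theorem~\ref{thm:HG-1}, identify the $\theta_i\#\theta_j$ shapes, and permute to block form. The only genuine difference is your ribbon ordering: you list vertical strips, hooks, horizontal strips (indices $[\ell]$, $[\ell+1,k]$, $[k+1,k+\ell]$), which puts the rows in the target order from the start and requires only the $\ell$ disjoint column transpositions $(j,\,k+j)$, $j\in[\ell]$, giving $(-1)^{\ell}$ immediately. The paper instead orders vertical strips, horizontal strips, hooks and performs row-block and column-block permutations totaling $2k\ell-\ell^2$ transpositions, arriving at the same sign $(-1)^{2k\ell-\ell^{2}}=(-1)^{\ell}$; your version is cleaner for the sign bookkeeping. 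One small point: the flag-matching step you describe as the most error-prone is actually a non-issue in the paper's setup, since $\mathbf{a}^{ij},\mathbf{b}^{ij}$, $\mathbf{p}^{ij},\mathbf{q}^{ij}$, $\tilde{\mathbf{p}}^{ij},\tilde{\mathbf{q}}^{ij}$ are introduced before the corollary precisely as notation for the induced ribbon flags supplied by Theorem~\ref{thm:HG-1} to the three kinds of entries, so all that is needed is to track which ribbon type lands in which block under the permutation, which you already do.
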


\begin{proof}
    First we define the order of ribbons in $\Theta$:
    $$
    \theta_i = 
    \begin{cases}
        \Theta(-v_i, u_i)\quad &\text{for}\quad i \in [\ell+1, k], \\
        \Theta(c_{i - \ell} + 1, u_{i - \ell})\quad &\text{for}\quad 
        i \in [\ell + 1, 2\ell], \\
        \Theta(-v_{i - \ell}, -d_{i - \ell} - 1)\quad 
        &\text{for}\quad i \in [2\ell + 1, k + \ell].
    \end{cases}
    $$
    Then specify the ribbons $\theta_i \# \theta_j$:
    \[
    \theta_i \# \theta_j = 
    \begin{cases}
        \Theta(-v_i, -d_j - 1)  = 1^{v_i - d_j}
        ,\ & i,j \in [\ell], \\[0.5em]
        \Theta(-v_i,u_{j - \ell}) = u_{j - \ell}|v_i 
        ,\ & i \in [\ell], 
        j\in [\ell + 1, k + \ell], \\[0.5em]
        \Theta(c_{i - \ell} + 1, u_{j - \ell}) = u_{j - \ell} - c_{i - \ell}
        ,\ & i \in [\ell + 1, 2\ell], 
        j\in [\ell + 1, k + \ell], \\[0.5em]
        \Theta(c_{i - \ell} + 1, -d_j - 1) = undefined 
        ,\ & i \in [\ell + 1, 2\ell]\ , j\in [\ell], \\[0.5em]
        \Theta(v_{i - \ell} + 1, -d_j - 1) = 1^{v_{i - \ell} - d_j} 
        ,\ & i \in [2\ell + 1, k + \ell]\ , 
        j\in [\ell], \\[0.5em]
        \Theta(-v_{i - \ell},u_{j - \ell}) = u_{j - \ell}|v_{i - \ell},
        \ & i \in [2\ell + 1, k + \ell], 
        j\in [\ell + 1, k + \ell].
    \end{cases}
    \]
    Now, using our Hamel--Goulden-type formula we get determinant of some 
    matrix which can be split into block matrices. 
    To simplify notation, we shall use only shapes (i.e. indices) of corresponding polynomials:
    $$
    \begin{bmatrix}
        \bigl[ 1^{v_i - d_j} \bigr]_{i,j \in [\ell]} 
        & \bigl[ u_{j - \ell} | v_i \bigr]_{\begin{subarray}{l}
            i \in [\ell] \\ j \in [\ell + 1, k + \ell] 
        \end{subarray}}\\\\
        \bigl[ undefined \bigr]_{\begin{subarray}{l}
            i \in [\ell + 1, 2\ell] \\ j \in [\ell] 
        \end{subarray}} 
        &  
        \bigl[ u_{j-l} - c_{i - \ell} \bigr]_{\begin{subarray}{l}
            i \in [\ell + 1, 2\ell] \\ j \in [\ell + 1, k + \ell] 
        \end{subarray}} \\\\
        \bigl[ 1^{v_{i - \ell} - d_j} \bigr]_{\begin{subarray}{l}
            i \in [2\ell + 1, k + \ell] \\ j \in [\ell] 
        \end{subarray}} 
        &
        \bigl[ u_{j - \ell} | v_{i - \ell} \bigr]_{\begin{subarray}{l}
            i \in [2\ell + 1, k + \ell] \\ j \in [\ell + 1, k + \ell] 
        \end{subarray}} 
    \end{bmatrix}.
    $$
    which can be rewritten as:
    $$
    \begin{bmatrix}
        \bigl[ 1^{v_i - d_j} \bigr]_{i,j \in [\ell]} 
        & \bigl[ u_{j} | v_i \bigr]_{\begin{subarray}{l}
            i \in [\ell] \\ j \in [k] 
        \end{subarray}}\\\\
        \bigl[ undefined \bigr]_{\begin{subarray}{l}
            i, j \in [\ell] 
        \end{subarray}} 
        &  
        \bigl[ u_{j} - c_{i} \bigr]_{\begin{subarray}{l}
            i \in [\ell] \\ j \in [k] 
        \end{subarray}} \\\\
        \bigl[ 1^{v_{i} - d_j} \bigr]_{\begin{subarray}{l}
            i \in [\ell + 1, k] \\ j \in [\ell] 
        \end{subarray}} 
        &
        \bigl[ u_{j} | v_{i} \bigr]_{\begin{subarray}{l}
            i \in [\ell + 1, k] \\ j \in [k] 
        \end{subarray}} 
    \end{bmatrix}.
    $$
    Permuting the second and third row blocks, and  
    the first and second column blocks gives 
    the desired matrix. The number of overall 
    permutations of rows and columns is  
    $2k\ell - \ell^2$ , and so we need to multiply the 
    determinant by the number $(-1)^{2k\ell - \ell^2} = (-1)^{\ell}$.
\end{proof}

\section{Formulas for dual refined canonical stable Grothendieck polynomials}\label{sec:g}
%We have already showed that the dual refined canonical stable Grothendieck polynomial $g_{\lambda/\mu}(\mathbf{x}_m; \boldsymbol{\alpha}, \boldsymbol{\beta})$ is a $g$-specialization of the $\mathsf{S}^{\mathbf{a}, \mathbf{b}}_{\lambda/\mu}(\mathbf{y} / \mathbf{z})$, and introduced the Hamel--Goulden formulas for them. 
In this section discuss specializations of the above formulas daul refined canonical stable Grothendieck polynomials. We show that the functions $g_{\lambda/\mu}(\mathbf{x}; \boldsymbol{\alpha}, \boldsymbol{\beta})$ has combinatorial formula via $g$-tableaux deduced from super tableaux, examine the Hamel--Goulden formulas in more detail and show new Jacobi-Trudi-type and skew Giambelli-type formulas. 

%for the $g_{\lambda/\mu}(\mathbf{x}_m; \boldsymbol{\alpha}, \boldsymbol{\beta})$.

\subsection{$g$-tableaux}
\begin{definition}
    % Let $\mathbf{a} = (-i + 2)_i$, $\mathbf{b} = (\lambda'_i + m - 1)_i$ be column flags for the r-shape $\lambda/\mu$.
    Let $m\in \mathbb{Z}_{\ge 0}$. Let $\mathbf{a}$, $\mathbf{b}$ be column flags for the r-shape $\lambda/\mu$.
    Let $\tilde{T} \in ST_{\lambda/\mu}(\mathbf{a, b})$ be super tableau such that $\tilde{T}$ does not contain primed element from $[m']$. We produce a \textit{$\mathsf{g}$-tableau} $T = (T_{i,j})$ of the same r-shape as follows:
    \begin{align*}
        T_{i,j} = 
        \begin{cases}
            \tilde{T}_{i,j}, & \text{ if } \quad \tilde{T}_{i,j} \in [m], \\
            (\tilde{T}_{i,j} - m)^{\circ}, & \text{ if } \quad \tilde{T}_{i,j} > m, \\
            (\tilde{T}_{i,j} - 1)^{\bullet}, & \text{ if } \quad \tilde{T}_{i,j} \le 0, \\
            (\tilde{T}_{i,j} - m)^{\bullet}, & \text{ if } \quad \tilde{T}_{i,j} > m', \\
            (\tilde{T}_{i,j} - 1)^{\circ}, & \text{ if } \quad \tilde{T}_{i,j} \le 0'.
        \end{cases}
    \end{align*}   
    Let ${GT}_{\lambda/\mu}(\mathbf{a}, \mathbf{b})$ be the set of all flagged $\mathsf{g}$-tableaux 
    produced from the set $ST_{\lambda/\mu}(\mathbf{a, b})$. 
    For $r \in \mathbb{Z}_{> 0}$, define the weight $wt(T_{ij})$ as follows: 
    \begin{equation*}
        wt(T_{ij}) := 
        \begin{cases}
            \label{eq:wt-func-bt-0}
            x_r, &\text{if} \quad T_{ij}=r, \\ 
            \alpha_r , &\text{if} \quad T_{ij}=r^{\bullet}, \\ 
            -\alpha_{r} , &\text{if} \quad T_{ij}=-r^{\bullet}, \\
            \beta_r, &\text{if} \quad T_{ij}=r^{\circ}, \\ 
            -\beta_{r}, &\text{if} \quad T_{ij}=-r^{\circ}.
        \end{cases}
    \end{equation*}
    and define the weight $wt(T)$ of the  $\mathsf{g}$-tableau $T$ as
    \begin{equation*}
        \label{eq:wt-func-gt}
        wt(T) = \prod_{i,j}wt(T_{ij}).
\end{equation*}

Let us fix the column flags $\mathbf{a} = (-i + 2)_i$, $\mathbf{b} = (\lambda'_i + m - 1)_i$  for the usual shape $\lambda/\mu$. We call $\mathsf{g}$-tableau with the flags $\mathbf{a}, \mathbf{b}$ as {\it $g$-tableau}. Let $GT_{\lambda/\mu}$ be the set of all $g$-tableaux of the shape $\lambda/\mu$. 
\end{definition}

\begin{proposition}[Tableaux formula for refined dual canonical Grothendieck polynomials] 
    The following tableaux formula holds:
    \begin{align*}
    \mathsf{g}^{\mathbf{a}, \mathbf{b}}_{\lambda/\mu}(\mathbf{x}_m; \boldsymbol{\alpha}, \boldsymbol{\beta}) = \sum_{T\in {GT}_{\lambda/\mu}(\mathbf{a}, \mathbf{b})} wt(T).
    \end{align*}
    In particular, if we fix column flags $\mathbf{a} = (-i + 2)_i$, $\mathbf{b} = (\lambda'_i + m - 1)_i$  for the usual shape $\lambda/\mu$, the following tableaux formula holds:
    \begin{align*}
    g_{\lambda/\mu}(\mathbf{x}_m; \boldsymbol{\alpha}, \boldsymbol{\beta}) = \sum_{T\in GT_{\lambda/\mu}} wt(T).
    \end{align*}
\end{proposition}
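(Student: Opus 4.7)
The plan is to derive both formulas directly from Theorem~\ref{thm:signed-expansion} by applying the $g$-specialization to the super tableau expansion of $\mathsf{S}^{\mathbf{a},\mathbf{b}}_{\lambda/\mu}(\mathbf{y}/\mathbf{z})$. First I would write
$$\mathsf{g}^{\mathbf{a},\mathbf{b}}_{\lambda/\mu}(\mathbf{x}_m;\boldsymbol{\alpha},\boldsymbol{\beta}) = \mathsf{S}^{\mathbf{a},\mathbf{b}}_{\lambda/\mu}(\mathbf{y}/\mathbf{z})\big|_g = \sum_{\tilde{T}\in ST_{\lambda/\mu}(\mathbf{a},\mathbf{b})} wt(\tilde{T})\big|_g,$$
and then analyze how the monomial weight of each super tableau transforms under the substitution. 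The crucial observation is that $g$-specialization sends $z_r\to 0$ for $r\in[m]$, so any super tableau $\tilde{T}$ that contains a primed entry $r'$ with $r\in[m]$ contributes $wt(\tilde{T}r')\big|_g=-z_r\big|_g=0$ and hence drops out of the sum. This is exactly the class of super tableaux excluded by the definition of $\mathsf{g}$-tableaux.

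Next I would exhibit the bijection between the surviving super tableaux and ${GT}_{\lambda/\mu}(\mathbf{a},\mathbf{b})$ given by the relabeling in the definition of $\mathsf{g}$-tableau, verifying weight preservation case by case. Unprimed $\tilde{T}_{ij}=r$ with $r\in[m]$ specializes to $y_r=x_r$, matching $wt(r)=x_r$. Unprimed $r>m$ specializes to $y_r=\beta_{r-m}$, matching $wt((r-m)^{\circ})=\beta_{r-m}$. Unprimed $r\le 0$ specializes to $y_r=-\alpha_{-r+1}$, matching $wt((r-1)^{\bullet})=-\alpha_{-r+1}$ since $r-1=-(-r+1)$. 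Primed $\tilde{T}_{ij}=r'$ with $r>m$ specializes to $-z_r=\alpha_{r-m}$, matching $wt((r-m)^{\bullet})=\alpha_{r-m}$. Primed $r'$ with $r\le 0$ specializes to $-z_r=-\beta_{-r+1}$, matching $wt((r-1)^{\circ})=-\beta_{-r+1}$. In every case the monomial contribution is preserved, so
$$\sum_{\tilde{T}\in ST_{\lambda/\mu}(\mathbf{a},\mathbf{b})} wt(\tilde{T})\big|_g = \sum_{T\in {GT}_{\lambda/\mu}(\mathbf{a},\mathbf{b})} wt(T),$$
which is the first claim.

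For the second identity I would invoke Proposition~\ref{prop:g-enum-ribbon}, which says that for the usual shape $\lambda/\mu$ and the specific flags $a_i=-i+2$, $b_i=\lambda'_i+m-1$, the flagged dual Grothendieck enumerator coincides with $g_{\lambda/\mu}(\mathbf{x}_m;\boldsymbol{\alpha},\boldsymbol{\beta})$. Substituting these flags into the first formula and abbreviating the resulting set ${GT}_{\lambda/\mu}(\mathbf{a},\mathbf{b})$ as $GT_{\lambda/\mu}$ yields the second formula.

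The only genuine verification is the case-by-case matching of weights and labels in the relabeling, which is routine but must be done carefully, especially to confirm that the semistandard and flag conditions defining $ST_{\lambda/\mu}(\mathbf{a},\mathbf{b})$ pass through the relabeling unchanged (they do, since the relabeling acts entrywise and the exclusion of primed entries from $[m']$ is precisely the condition encoded in the definition of $\mathsf{g}$-tableau). The main conceptual step is recognizing that the vanishing of $z_r$ for $r\in[m]$ under $g$-specialization is what legislates this exclusion, and this is the only place where the proof uses anything beyond Theorem~\ref{thm:signed-expansion} and Proposition~\ref{prop:g-enum-ribbon}.
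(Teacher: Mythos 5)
Your proposal is correct and takes essentially the same route as the paper: apply $g$-specialization to the super tableau expansion of Theorem~\ref{thm:signed-expansion}, observe that super tableaux with primed entries in $[m']$ acquire a factor $-z_r$ with $r\in[m]$ and hence vanish, and check that the entrywise relabeling from the surviving super tableaux to $\mathsf{g}$-tableaux preserves weights; the second formula then follows from Proposition~\ref{prop:g-enum-ribbon}. You simply spell out the case-by-case weight verification that the paper compresses into ``it is not hard to see.''
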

\begin{proof}
    Let $\tilde{T}\in ST_{\lambda/\mu}(\mathbf{a,b})$ and $T\in {GT}_{\lambda/\mu}(\mathbf{a}, \mathbf{b})$. It is not hard to see that $wt(\tilde{T}_{i,j}) \rightarrow wt(T_{i,j})$ corresponds to $g$-specialization (the effect of $z_i \rightarrow 0$ is achieved by not taking into account super tableaux with primed elements from $[m']$), then the result follows from  Theorem~\ref{thm:signed-expansion}.
\end{proof}

\subsection{Determinantal formulas}
Let $m \in \mathbb{Z}_{\ge 0}$, $\mathbf{a} = (-i + 2)_i$, $\mathbf{b} = (\lambda'_i + m - 1)_i$ be column flags for the usual shape $\lambda/\mu$, and $\mathbf{a}^{ij}, \mathbf{b}^{ij}$ be induced ribbon flags. Recall that we have the following Hamel--Goulden-type 
\begin{align*}
    g_{\lambda/\mu}(\mathbf{x}_m; \boldsymbol{\alpha}, \boldsymbol{\beta}) = 
    \det \left[\mathsf{g}_{\theta_i \# \theta_j}^{\mathbf{a}^{ij}, 
    \mathbf{b}^{ij}}(\mathbf{x}_m; \boldsymbol{\alpha}, \boldsymbol{\beta})\right]_{1\le i,j \le k}.
\end{align*}
written via flagged dual Grothendieck enumerators. 
Let  
$\theta_i \# \theta_j = \rho^{ij}_1 \to \ldots \to \rho^{ij}_{k^{ij}}$ be canonical decompositions, and $\delta^{ij}_r, \gamma^{ij}_r$ 
be the  head and tail of the ribbon $\theta_i \# \theta_j$. 
Let $m^{ij}_r = \min(c(\gamma^{ij}_r))$ and $M^{ij}_r = 
\max(c(\delta^{ij}_r))$.

\begin{proposition}
We can specify the induced ribbon flags $\mathbf{a}^{ij}, \mathbf{b}^{ij}$ as follows: 
\begin{alignat*}{2}
    &\mathbf{a}^{ij} =  (-c(m^{ij}_r) - \mu'_{\mathrm{col}(m^{ij}_r)} + 1)_{r\in [{k^{ij}}]} &\qquad& 
    \mathbf{b}^{ij} = (\mathrm{row}(M^{ij}_r) + m - 1)_{r\in [k^{ij}] } \\ 
    &\mathbf{a'}^{ij} =  (- \mu'_{\mathrm{col}(m^{ij}_r)} + 1)_{r\in [k^{ij}]} &\qquad& 
    \mathbf{b'}^{ij} = (\mathrm{col}(M^{ij}_r) + m - 1)_{r\in [k^{ij}]}
\end{alignat*}
where $\mathbf{a'}^{ij}, \mathbf{b'}^{ij}$ are conjugate flags to $\mathbf{a}^{ij}, \mathbf{b}^{ij}$. We also have
    \begin{align*}
        \mathsf{g}_{\theta_i \# \theta_j}^{\mathbf{a}^{ij}, \mathbf{b}^{ij}}(\mathbf{x}_m; \boldsymbol{\alpha}, \boldsymbol{\beta}) = \det \left[ e_{c(\gamma^{ij}_q) - c(\delta^{ij}_p) + 1}(\mathbf{x}_{u_j,m};\overline{\boldsymbol{\alpha}}_{\tau(a^{ij}_q)}, \boldsymbol{\beta}_{\eta(a^{ij}_q), \eta(b^{ij}_p)} \,/\, \overline{\boldsymbol{\alpha}}_{\eta(b'^{ij}_p)}, \boldsymbol{\beta}_{\tau(a'^{ij}_q)}) \right]_{1\le p,q \le n},
    \end{align*}
    where $n \ge k^{ij}$ and $u_j = max(1, a_j)$ and  we denote $\tau(k) = -k + 1$ and $\eta(k) = k - m$. 
\end{proposition}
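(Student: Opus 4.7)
The plan is to verify both assertions by direct substitution, with the determinantal identity obtained by applying Proposition~\ref{prop:g-enum-ribbon} to the r-shape $\theta_i\#\theta_j$ carrying the induced flags.

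For the first part, I substitute $a_i = -i + 2$ and $b_i = \lambda'_i + m - 1$ into the defining formulas
\[
a^{ij}_r = a_{\mathrm{col}(m^{ij}_r)} - (\mu'_{\mathrm{col}(m^{ij}_r)} + 1) + \mathrm{row}(m^{ij}_r),\quad b^{ij}_r = b_{\mathrm{col}(M^{ij}_r)} - \lambda'_{\mathrm{col}(M^{ij}_r)} + \mathrm{row}(M^{ij}_r),
\]
and use the identity $c(\gamma) = \mathrm{col}(\gamma) - \mathrm{row}(\gamma)$ valid in the usual shape $\lambda/\mu$; routine simplification yields the claimed expressions for $\mathbf{a}^{ij}, \mathbf{b}^{ij}$. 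For the conjugate flags I apply $a'^{ij}_r = a^{ij}_r + c(\gamma^{ij}_r)$ and $b'^{ij}_r = b^{ij}_r + c(\delta^{ij}_r)$ from the definition of conjugate flags; since $c(\gamma^{ij}_r) = c(m^{ij}_r)$ and $c(\delta^{ij}_r) = c(M^{ij}_r)$ by construction of $m^{ij}_r$ and $M^{ij}_r$, the content terms cancel and produce $\mathbf{a}'^{ij}$, while combining with the row/column data gives $\mathbf{b}'^{ij}$ in the stated form.

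For the second part I apply Proposition~\ref{prop:g-enum-ribbon} to the r-shape $\theta_i\#\theta_j$ with column flags $\mathbf{a}^{ij}, \mathbf{b}^{ij}$ and conjugate row flags $\mathbf{a}'^{ij}, \mathbf{b}'^{ij}$. Two simplifications complete the proof. First, writing $\theta_i\#\theta_j = \kappa/\sigma$ and using the r-shape content shift, one checks that $\kappa'_p - p = -c(\delta^{ij}_p) + C$ and $\sigma'_q - q = -c(\gamma^{ij}_q) - 1 + C$ for a common constant $C$ (depending only on the root content of the r-shape); the difference therefore equals $c(\gamma^{ij}_q) - c(\delta^{ij}_p) + 1$, matching the exponent in the claim. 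Second, the explicit flag formulas from the first part give $b^{ij}_p = \mathrm{row}(M^{ij}_p) + m - 1 \ge m$, so $v_p = \min(m, b^{ij}_p) = m$; and the endpoints $\tau(b^{ij}_p), \eta(a'^{ij}_q), \tau(b'^{ij}_p)$ are all $\le 0$ (with harmless marginal cases when $m = 0$), so by the convention $\alpha_i = \beta_i = 0$ for $i \le 0$ each of the three double-subscripted ranges $\overline{\boldsymbol{\alpha}}_{\tau(b^{ij}_p), \tau(a^{ij}_q)}$, $\overline{\boldsymbol{\alpha}}_{\eta(a'^{ij}_q), \eta(b'^{ij}_p)}$, $\boldsymbol{\beta}_{\tau(b'^{ij}_p), \tau(a'^{ij}_q)}$ collapses to its single-endpoint counterpart as written in the statement.

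The argument has no conceptual obstacle and is essentially a careful index-tracking exercise. The most delicate point is handling the content shift correctly when invoking Proposition~\ref{prop:g-enum-ribbon} for $\theta_i\#\theta_j$ viewed as an r-shape: cell contents in the sub-ribbon coincide with their contents in $\lambda/\mu$, which is what makes the identifications $c(\gamma^{ij}_r) = c(m^{ij}_r)$ and $c(\delta^{ij}_r) = c(M^{ij}_r)$ legitimate; ensuring these are used consistently, especially in the exponent computation and in the collapse of the $\overline{\boldsymbol{\alpha}}, \boldsymbol{\beta}$ ranges, is the only place where care is required.
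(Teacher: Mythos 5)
Your route is the same as the paper's: apply Proposition~\ref{prop:g-enum-ribbon} to $\theta_i\#\theta_j$ with the induced flags and simplify. The flag substitutions, the conjugate-flag identities $a'^{ij}_r=a^{ij}_r+c(\gamma^{ij}_r)$ and $b'^{ij}_r=b^{ij}_r+c(\delta^{ij}_r)$ combined with $c(\gamma^{ij}_r)=c(m^{ij}_r)$, $c(\delta^{ij}_r)=c(M^{ij}_r)$, the exponent computation from the shifted-content formulas, and the collapse of the double-indexed ranges using $\tau(b^{ij}_p)\le 0$, $\eta(a'^{ij}_q)\le 1$ (you write $\le 0$, which handles $m\ge 1$; the paper's bound $\le 1$ also covers $m=0$), $\tau(b'^{ij}_p)\le 0$ are all correct and coincide in substance with the paper's one-line proof.

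There is, however, one item that both you and the paper's proof leave unaddressed: the \emph{lower} endpoint of the $\mathbf{x}$-range. Applying Proposition~\ref{prop:g-enum-ribbon} with column index $q$ actually yields $\mathbf{x}_{u_q,m}$ with $u_q:=\max(1,a^{ij}_q)$, and since $a^{ij}_q=1-c(m^{ij}_q)-\mu'_{\mathrm{col}(m^{ij}_q)}$, this can exceed $1$ whenever $m^{ij}_q$ is not the top cell of its column. For instance, take $\lambda=(3,3,3)$, $\mu=\varnothing$, the horizontal decomposition, and $\theta_3\#\theta_1$: then $a^{31}_1=3$, so $u_1=3$; with $m=3$, $\boldsymbol{\alpha}=\boldsymbol{\beta}=0$, the left-hand side equals $x_3^5$, while substituting $\mathbf{x}_m$ for $\mathbf{x}_{u_q,m}$ in the determinant gives $h_5(\mathbf{x}_3)\ne x_3^5$. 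So the statement's $\mathbf{x}_{u_j,m}$ with $u_j=\max(1,a_j)$ (identically $\mathbf{x}_{1,m}=\mathbf{x}_m$) must in fact be $\mathbf{x}_{u_q,m}$ with $u_q=\max(1,a^{ij}_q)$, and a complete proof should state and use this, instead of silently passing over it.
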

\begin{proof}
    Follows from Proposition~\ref{prop:g-enum-ribbon} using the fact that $\tau(b^{ij}_p) \le 0$, $\eta(a'^{ij}_q) \le 1, \tau(b'^{ij}_p) \le 0$, for all $p,q \in [n]$. 
\end{proof}

\begin{proposition}
Note that the flags $\mathbf{a,b}$ are also strict. We can specify the induced strict ribbon flags $\mathbf{\tilde{a}}^{ij}, \mathbf{\tilde{b}}^{ij}$ as follows:
\begin{alignat*}{2}
    &\mathbf{\tilde{a}}^{ij} = 
    (-\mathrm{col}(m^{ij}_r) + 2)_{r\in [k^{ij}]} &\qquad& 
    \mathbf{\tilde{b}}^{ij} = 
    (\mathrm{row}(M^{ij}_r) + m - 1)_{r\in [k^{ij}]} \\ 
    &\mathbf{\tilde{a}}'^{ij} = 
    (-\mathrm{row}(m^{ij}_r) + 2)_{r \in [k^{ij}]} &\qquad& 
    \mathbf{\tilde{b}}'^{ij} = 
    (\mathrm{col}(M^{ij}_r) + m - 1)_{r \in [k^{ij}] }
\end{alignat*}
We also have 
\begin{align*}
    \mathsf{g}_{\theta_i \# \theta_j}^{\mathbf{\tilde{a}}^{ij}, \mathbf{\tilde{b}}^{ij}}(\mathbf{x}_m; \boldsymbol{\alpha}, \boldsymbol{\beta}) = \det \left[ e_{c(\gamma^{ij}_q) - c(\delta^{ij}_p) + 1}(\mathbf{x}_{m};\overline{\boldsymbol{\alpha}}_{\tau(\tilde{a}^{ij}_q)}, \boldsymbol{\beta}_{\eta(\tilde{b}^{ij}_p)} \,/\, \overline{\boldsymbol{\alpha}}_{\eta(\tilde{b}'^{ij}_p)}, \boldsymbol{\beta}_{\tau(\tilde{a}'^{ij}_q)}) \right]_{1\le p,q \le n},
\end{align*}
where $n \ge k^{ij}$.
\end{proposition}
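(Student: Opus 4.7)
The plan is to verify the two main claims in sequence: first computing the strict induced ribbon flags and their conjugates explicitly, then specializing Proposition~\ref{prop:g-enum-ribbon} to obtain the compact determinantal entries. Strictness of $\mathbf{a}$ is immediate since $a_i - a_{i+1} = 1$, and $\mathbf{b}$ automatically satisfies the standard column flag condition as $b_i - b_{i+1} = \lambda'_i - \lambda'_{i+1} \le \lambda'_i - \lambda'_{i+1} + 1$.

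For the induced flag formulas I would apply the definition of induced strict ribbon flags to the canonical decomposition $\theta_i \# \theta_j = \rho^{ij}_1 \to \cdots \to \rho^{ij}_{k^{ij}}$. By definition $\min(\gamma^{ij}_r) = m^{ij}_r$ and $\max(\delta^{ij}_r) = M^{ij}_r$, so direct substitution gives $\tilde{a}^{ij}_r = \tilde{a}_{\mathrm{col}(m^{ij}_r)} = -\mathrm{col}(m^{ij}_r) + 2$, and after the cancellation of $\lambda'_{\mathrm{col}(M^{ij}_r)}$ also $\tilde{b}^{ij}_r = \mathrm{row}(M^{ij}_r) + m - 1$. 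For the conjugate flags I use $\tilde{a}'^{ij}_r = \tilde{a}^{ij}_r + c(\gamma^{ij}_r)$ together with the key observation that the shifted content of $\gamma^{ij}_r$ in the r-shape $\theta_i \# \theta_j$ coincides with the content of $m^{ij}_r$ in $\lambda/\mu$, namely $\mathrm{col}(m^{ij}_r) - \mathrm{row}(m^{ij}_r)$; the $\mathrm{col}$ terms then cancel to yield $\tilde{a}'^{ij}_r = -\mathrm{row}(m^{ij}_r) + 2$, and an analogous calculation for $\delta^{ij}_r$ and $M^{ij}_r$ yields $\tilde{b}'^{ij}_r = \mathrm{col}(M^{ij}_r) + m - 1$.

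For the determinantal formula I would apply Proposition~\ref{prop:g-enum-ribbon} to the r-shape $\theta_i \# \theta_j$ with flags $\mathbf{\tilde{a}}^{ij}, \mathbf{\tilde{b}}^{ij}$. Two checks are then needed. The first is that the subscript of $e$ matches: writing the ribbon $\theta_i \# \theta_j$ as a standalone skew partition $\nu/\kappa$, the bottom of the $p$-th column has content $p - \nu'_p$ and the top of the $q$-th column has content $q - \kappa'_q - 1$, so $\nu'_p - p - \kappa'_q + q$ equals the content difference (plus one) of these cells, and since content differences are invariant under the r-shape shift this equals $c(\gamma^{ij}_q) - c(\delta^{ij}_p) + 1$. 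The second check is that the variable ranges collapse as claimed: using $m \ge 1$ and $\mathrm{row}(\cdot), \mathrm{col}(\cdot) \ge 1$, we obtain $\tau(\tilde{b}^{ij}_p), \tau(\tilde{b}'^{ij}_p), \eta(\tilde{a}^{ij}_q), \eta(\tilde{a}'^{ij}_q) \le 0$, while $u_q = \max(1, \tilde{a}^{ij}_q) = 1$ and $v_p = \min(m, \tilde{b}^{ij}_p) = m$. Since $\alpha_i = \beta_i = 0$ for $i \le 0$ by convention, each doubly-indexed variable range in Proposition~\ref{prop:g-enum-ribbon} reduces to the singly-indexed range appearing in the claimed formula.

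The main obstacle is really just the careful content bookkeeping between the r-shape $\theta_i \# \theta_j$, its realization as a standalone skew partition $\nu/\kappa$, and the cells $m^{ij}_r, M^{ij}_r$ of the ambient $\lambda/\mu$. Once the key identification is pinned down, that shifted contents in the r-shape agree by construction with the usual contents of the corresponding cells of $\lambda/\mu$, all remaining manipulations reduce to direct substitutions and sign checks.
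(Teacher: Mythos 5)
Your proof is correct and takes essentially the same route as the paper: the paper's own argument is a one-line appeal to Proposition~\ref{prop:g-enum-ribbon} together with the bounds $\tau(\tilde{b}^{ij}_p) \le 0$, $\eta(\tilde{a}^{ij}_q) \le 1$, $\eta(\tilde{a}'^{ij}_q) \le 1$, $\tau(\tilde{b}'^{ij}_p) \le 1$, and you spell out the same verification in more detail (flag formulas by direct substitution, conjugate flags via the content identity, subscript and variable-range collapse). The only slight discrepancy is that you assume $m \ge 1$ to get your bounds $\le 0$; the paper works with $m \ge 0$ and the weaker but sufficient bounds $\le 1$ for the $\eta$-quantities, which is what actually matters for the collapse since $\alpha_i = \beta_i = 0$ for $i \le 0$.
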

\begin{proof}
    Follows from Proposition~\ref{prop:g-enum-ribbon} using the fact that $\tau(\tilde{b}^{ij}_p) \le 0, \eta(\tilde{a}^{ij}_q) \le 1, \eta(\tilde{a}'^{ij}_q) \le 1, \tau(\tilde{b}'^{ij}_p) \le 0$, for all $p,q \in [n]$. 
\end{proof}

\begin{remark}
The latter equation for $\mathsf{g}_{\theta_i \# \theta_j}^{\mathbf{\tilde{a}}^{ij}, \mathbf{\tilde{b}}^{ij}}(\mathbf{x}_m; \boldsymbol{\alpha}, \boldsymbol{\beta})$ looks somewhat simpler than the preceding one. %, which is the principal reason for introducing the strict flags $\mathbf{\tilde{a}}, \mathbf{\tilde{b}}$.
\end{remark}

We now show Jacobi-Trudi-type formulas for $g_{\lambda/\mu}(\mathbf{x}_m; \boldsymbol{\alpha}, \boldsymbol{\beta})$. Let $\mathbf{a, b}$ be some columns flags for r-shape $(n, c)$ and let:
$$
    h^{\mathbf{a, b}}_{n,c}(\mathbf{x}_m; \boldsymbol{\alpha}, \boldsymbol{\beta}) := h^{\mathbf{a, b}}_{n,c}(\mathbf{y} / \mathbf{z}) |_g = \mathsf{g}^{\mathbf{a, b}}_{(n)}(\mathbf{x}_m; \boldsymbol{\alpha}, \boldsymbol{\beta}).
$$

\begin{corollary}[Jacobi-Trudi-type formula for dual refined canonical stable Grothendieck polynomials]
    Let $\mathbf{a} = (-i + 2)_i,\ \mathbf{b} = (\lambda'_i + m - 1)_i$ be column flags, and $\mathbf{a}^{ij}, \mathbf{b}^{ij}$ be corresponding induced ribbon flags for the r-shape $(\lambda_i-\mu_j-i+j, \mu_j + 1 - j)$. We have 
    \begin{align*}
        g_{\lambda/\mu}(\mathbf{x}_m; \boldsymbol{\alpha}, \boldsymbol{\beta}) = \det \left[h^{\mathbf{a}^{ij}, \mathbf{b}^{ij}}_{\lambda_i - \mu_j - i + j,\, \mu_j + 1 - j}(\mathbf{x}_m; \boldsymbol{\alpha}, \boldsymbol{\beta})\right]_{1 \le i,j \le n}, 
    \end{align*}
    where $n \ge \lambda'_1$. %, and $\mathbf{a}^{ij}, \mathbf{b}^{ij}$ are ribbon flags for the r-shape $\lambda_i-\mu_j-i+j = (\lambda_i-\mu_j-i+j, \mu_j + 1 - j)$,.
\end{corollary}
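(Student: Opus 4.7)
The proof will be a direct specialization of the Jacobi--Trudi-type formula for flagged supersymmetric Schur functions established earlier in \textsection\,\ref{sec:special}. The plan is to start from that identity, apply the $g$-specialization termwise to both sides, and identify each specialized quantity via Proposition~\ref{prop:g-enum-ribbon} and the definition of $h^{\mathbf{a,b}}_{n,c}(\mathbf{x}_m;\boldsymbol{\alpha},\boldsymbol{\beta})$.

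First I would verify that the vectors $\mathbf{a}=(-i+2)_i$ and $\mathbf{b}=(\lambda'_i+m-1)_i$ are indeed column flags for the usual shape $\lambda/\mu$; this is exactly the choice appearing in Proposition~\ref{prop:g-enum-ribbon}, so the flag inequalities are satisfied and the induced ribbon flags $\mathbf{a}^{ij},\mathbf{b}^{ij}$ for the horizontal-ribbon decomposition are well defined on each r-shape $(\lambda_i-\mu_j-i+j,\ \mu_j+1-j)$. With these flags in place, the previously proved Jacobi--Trudi-type formula for flagged supersymmetric Schur functions yields
\[
\mathsf{S}^{\mathbf{a},\mathbf{b}}_{\lambda/\mu}(\mathbf{y}/\mathbf{z})
= \det\left[h^{\mathbf{a}^{ij},\mathbf{b}^{ij}}_{\lambda_i-\mu_j-i+j,\,\mu_j+1-j}(\mathbf{y}/\mathbf{z})\right]_{1\le i,j\le n},
\]
valid for $n\ge \ell(\lambda)$ (and in particular $n\ge \lambda'_1$).

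Next I would apply the $g$-specialization to both sides. Since the specialization is simply a substitution in the underlying variables $(\mathbf{y},\mathbf{z})$, it commutes with the determinant and with the entrywise evaluation. On the left-hand side, Proposition~\ref{prop:g-enum-ribbon} identifies $\mathsf{S}^{\mathbf{a},\mathbf{b}}_{\lambda/\mu}(\mathbf{y}/\mathbf{z})|_g = g_{\lambda/\mu}(\mathbf{x}_m;\boldsymbol{\alpha},\boldsymbol{\beta})$, precisely because $(\mathbf{a},\mathbf{b})$ was chosen for this purpose. On the right-hand side, each matrix entry satisfies $h^{\mathbf{a}^{ij},\mathbf{b}^{ij}}_{\lambda_i-\mu_j-i+j,\,\mu_j+1-j}(\mathbf{y}/\mathbf{z})|_g = h^{\mathbf{a}^{ij},\mathbf{b}^{ij}}_{\lambda_i-\mu_j-i+j,\,\mu_j+1-j}(\mathbf{x}_m;\boldsymbol{\alpha},\boldsymbol{\beta})$ directly by the definition of the latter symbol, which is given in the preceding paragraph of the same subsection. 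Combining the two identifications yields the claimed determinant.

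I do not anticipate any genuine obstacle: the result is essentially a transparent consequence of the $g$-specialization diagram applied to an already established identity. The only tiny thing to be careful about is that the induced ribbon flags $\mathbf{a}^{ij},\mathbf{b}^{ij}$ are constructed from the \emph{ambient} flags $\mathbf{a},\mathbf{b}$ and hence coincide on both sides of the specialization; this is immediate from the definition but worth pointing out in the write-up so that the reader sees that no reindexing is needed.
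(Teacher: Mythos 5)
Your proposal is correct and takes essentially the same route the paper intends: you specialize the flagged Jacobi--Trudi identity from \textsection\,\ref{sec:special} under the $g$-specialization, use Proposition~\ref{prop:g-enum-ribbon} to identify the left-hand side with $g_{\lambda/\mu}(\mathbf{x}_m;\boldsymbol{\alpha},\boldsymbol{\beta})$ (noting that $(\mathbf{a},\mathbf{b})$ here are exactly the flags from that proposition), and use the definition $h^{\mathbf{a,b}}_{n,c}(\mathbf{x}_m;\boldsymbol{\alpha},\boldsymbol{\beta}) := h^{\mathbf{a,b}}_{n,c}(\mathbf{y}/\mathbf{z})|_g$ for the entries, with the observation that $g$-specialization is a ring homomorphism on the variables and hence commutes with the determinant. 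Your remark that the induced ribbon flags are a purely combinatorial construction unaffected by the specialization, and that $n\ge\lambda'_1$ matches $n\ge\ell(\lambda)$, closes the argument.
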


\begin{remark}
We note that this formula is different from the known Jacobi-Trudi-type formula obtained in \cite{hwang1, hwang2}. On the other hand, we can deduce the known formula from the row flagged supersymmetric Schur function as follows. Consider the row flags  $\mathbf{a}' = (-\mu_i + 1)_i, \mathbf{b} = (i + m - 1)_i$ for the (usual) shape $\lambda/\mu$, and the conjugate (column) flags $\mathbf{a} = (-i + 2)_i$, $\mathbf{b} = (\lambda_i + m - 1)_i$. We then have 
\begin{align*}
    \overline{\mathsf{S}}^{\mathbf{a}', \mathbf{b}'}_{\lambda/\mu}(\mathbf{y} / \mathbf{z}) |_g &= \det \left[ h_{\lambda_i-\mu_j-i+j}(\mathbf{y}_{-\mu_j + 1,\ i + m - 1}\ /\ \mathbf{z}_{-j + 2,\ \lambda_i + m - 1}) |_g  \right]_{1 \le i,j \le \ell(\lambda)} \\ 
    &= \det \left[ h_{\lambda_i-\mu_j-i+j}(\mathbf{x}_m, \overline{\boldsymbol{\alpha}}_{\mu_j} , \boldsymbol{\beta}_{i - 1}\ /\ \overline{\boldsymbol{\alpha}}_{\lambda_i - 1}, \boldsymbol{\beta}_{j - 1})  \right]_{1 \le i,j \le \ell(\lambda)} \\
    &= g_{\lambda/\mu}(\mathbf{x}_m; \boldsymbol{\alpha}, \boldsymbol{\beta}). 
\end{align*}
\end{remark}

We also show another new dual Jacobi-Trudi-type formula. Let $\mathbf{a}', \mathbf{b}'$ be some row flags for r-shape $(1^n, c)$, let 
\begin{align*}
    e^{\mathbf{a}', \mathbf{b}'}_{n,c}(\mathbf{x}_m; \boldsymbol{\alpha}, \boldsymbol{\beta}) := e^{\mathbf{a}', \mathbf{b}'}_{n, c}(\mathbf{y} / \mathbf{z}) |_g = \overline{\mathsf{S}}^{\mathbf{a}', \mathbf{b}'}_{1^n}(\mathbf{y} / \mathbf{z}) |_g. 
\end{align*}

\begin{corollary}[Dual Jacobi-Trudi-type formula for dual refined canonical stable Grothendieck polynomials]
    Let $\mathbf{a}' = (-\mu_i + 1)_i,\ \mathbf{b}' = (i + m - 1)_i$ be row flags and $\mathbf{a}'^{ij},\ \mathbf{b}'^{ij}$ the induced row ribbon flags for the r-shape $(1^{\lambda'_i-\mu'_j-i+j}, i - \lambda'_i)$. We have
    \begin{align*}
        g_{\lambda/\mu}(\mathbf{x}_m; \boldsymbol{\alpha}, \boldsymbol{\beta}) = \det \left[e^{\mathbf{a}'^{ij}, \mathbf{b}'^{ij}}_{\lambda'_i - \mu'_j - i + j,\, i - \lambda'_i}(\mathbf{x}_m; \boldsymbol{\alpha}, \boldsymbol{\beta})\right]_{i,j}^{n}, 
    \end{align*}
    where $n \ge \ell(\lambda)$.
\end{corollary}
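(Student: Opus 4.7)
The plan is to obtain the claimed formula as a direct $g$-specialization of the row-flagged Hamel--Goulden identity (Corollary~\ref{thm:HG-1-dual}) applied to the outer decomposition of $\lambda/\mu$ into vertical ribbons.

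First I would fix the row flags $\mathbf{a}' = (-\mu_i + 1)_i,\ \mathbf{b}' = (i + m - 1)_i$ on $\lambda/\mu$ and observe, via the duality in Proposition~\ref{prop:g-enum-ribbon} together with the row-flagged tableau expansion, that $\overline{\mathsf{S}}^{\mathbf{a}', \mathbf{b}'}_{\lambda/\mu}(\mathbf{y} / \mathbf{z})|_g = g_{\lambda/\mu}(\mathbf{x}_m; \boldsymbol{\alpha}, \boldsymbol{\beta})$. This is precisely the identification carried out in the remark just before the statement, where the substitution of the $g$-specialization into the entries $h_{\lambda_i - \mu_j - i + j}(\mathbf{y}_{a'_j, b'_i} / \mathbf{z}_{a_j, b_i})$ produces the defining determinant for $g_{\lambda/\mu}(\mathbf{x}_m;\boldsymbol{\alpha},\boldsymbol{\beta})$.

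Next I would take the outer ribbon decomposition $\Theta = (\theta_1, \ldots, \theta_{\lambda_1})$ of $\lambda/\mu$ in which each $\theta_i$ is the $i$-th column (a vertical ribbon), so that $\theta_i \# \theta_j$ is the r-shape $(1^{\lambda'_i - \mu'_j - i + j},\ i - \lambda'_i)$, becoming $\varnothing$ when $\lambda'_i - \mu'_j - i + j = 0$ and undefined when $\lambda'_i - \mu'_j - i + j < 0$. Corollary~\ref{thm:HG-1-dual} then yields
\[
\overline{\mathsf{S}}^{\mathbf{a}', \mathbf{b}'}_{\lambda/\mu}(\mathbf{y} / \mathbf{z}) = \det\left[\overline{\mathsf{S}}^{\mathbf{a}'^{ij}, \mathbf{b}'^{ij}}_{\theta_i\#\theta_j}(\mathbf{y}/\mathbf{z})\right]_{1 \le i,j \le n},
\]
for $n \ge \ell(\lambda)$, with the induced row ribbon flags $\mathbf{a}'^{ij}, \mathbf{b}'^{ij}$ as defined earlier.

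Finally I would apply $g$-specialization term by term to this identity. On the left side, the specialization gives $g_{\lambda/\mu}(\mathbf{x}_m;\boldsymbol{\alpha},\boldsymbol{\beta})$ by the first step. On the right side, each entry is by definition $e^{\mathbf{a}'^{ij}, \mathbf{b}'^{ij}}_{\lambda'_i - \mu'_j - i + j,\, i - \lambda'_i}(\mathbf{x}_m; \boldsymbol{\alpha}, \boldsymbol{\beta})$, so the desired formula follows. The main obstacle, if any, is bookkeeping: one must verify that the induced row ribbon flags $\mathbf{a}'^{ij}, \mathbf{b}'^{ij}$ attached to the vertical ribbon $\theta_i\#\theta_j$ do remain row flags after $g$-specialization and that the boundary cases $\theta_i\#\theta_j = \varnothing$ (giving $e^{\cdot}_{0,c}=1$) and undefined (giving $0$) match the convention used in the determinant; both follow from the corresponding conventions in Theorem~\ref{thm:HG-1} and Corollary~\ref{thm:HG-1-dual}, so no substantive difficulty arises.
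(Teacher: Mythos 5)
Your proposal is correct and follows the route the paper takes implicitly (it gives no explicit proof for this corollary): identify $\overline{\mathsf{S}}^{\mathbf{a}', \mathbf{b}'}_{\lambda/\mu}(\mathbf{y}/\mathbf{z})|_g = g_{\lambda/\mu}(\mathbf{x}_m;\boldsymbol{\alpha},\boldsymbol{\beta})$ via the computation in the preceding Remark, then apply the $g$-specialization termwise to the section~\ref{sec:special} Dual Jacobi--Trudi formula (itself Corollary~\ref{thm:HG-1-dual} with the vertical-ribbon decomposition). The only imprecision is attributing the first identification to ``the duality in Proposition~\ref{prop:g-enum-ribbon}''---it is really the explicit substitution carried out in the Remark that establishes this---and the aside about flags ``remaining row flags after $g$-specialization'' is a non-issue, since the flags are integer vectors untouched by the variable substitution; neither of these affects the validity of the argument.
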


We now show skew Giambelli-type formula for $g_{\lambda/\mu}(\mathbf{x}_m; \boldsymbol{\alpha}, \boldsymbol{\beta})$. Let us write the shapes in Frobenius notation: 
\begin{align*}
    \lambda = (u_1,...,u_k | v_1, ..., v_k) \qquad \mu = (c_1,...,c_{\ell}|d_1,...,d_{\ell}). 
\end{align*}
Let us define the following ribbon flags:
\begin{itemize}
    \item $\mathbf{a}^{ij} = (a^{ij}_r)_r, \mathbf{b}^{ij} = (b^{ij}_r)_r$ 
    for hooks $(u_j|v_i)$, $i,j\in [k]$
    \item $\mathbf{p}^{ij} = (p^{ij}_r)_r,\mathbf{q}^{ij} = (q^{ij}_r)_r$
    for vertical ribbons $1^{v_i - d_j}$, $i \in [k]\ , j\in [\ell]$
    \item $\mathbf{\tilde{p}}^{ij} = (\tilde{p}^{ij}_r)_r,\mathbf{\tilde{q}}^{ij} = (\tilde{q}^{ij}_r)_r$ 
    for horizontal ribbons $u_j - c_i$, $i \in [\ell]\ , j\in [k]$
\end{itemize}
\begin{corollary}[Giambelli-type formula for dual refined canonical stable Grothendieck polynomials]
    \label{cor:gen-giam-g}
    \begin{align*}
        g_{\lambda/\mu}(\mathbf{x}_m; \boldsymbol{\alpha}, \boldsymbol{\beta}) = (-1)^{\ell} \det \begin{bmatrix}
    \bigl[\mathsf{g}^{\mathbf{a}^{ij}, \mathbf{b}^{ij}}_{u_j \mid v_i}\bigr]_{\begin{subarray}{l}
            i,j \in [k] 
        \end{subarray}} 
    & \bigl[\mathsf{g}^{\mathbf{p}^{ij},\mathbf{q}^{ij}}_{1^{v_i - s_j}}\bigr]_{\begin{subarray}{l}
            i \in [k] \\ j \in [\ell] 
        \end{subarray}} \\\\
    \bigl[\mathsf{g}^{\mathbf{\tilde{p}}^{ij},\mathbf{\tilde{q}}^{ij}}_{u_j - r_i}\bigr]_{\begin{subarray}{l}
            i \in [\ell] \\ j \in [k] 
        \end{subarray}} 
    & \mathbf{0}
\end{bmatrix}.
\end{align*}
\end{corollary}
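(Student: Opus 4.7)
The plan is to obtain this corollary by specializing the already-proven skew Giambelli formula for flagged supersymmetric Schur functions (Corollary~\ref{cor:Giambelli-main}) under the $g$-specialization, just as the general Hamel--Goulden formula for $g_{\lambda/\mu}$ was obtained earlier in the paper by specializing Theorem~\ref{thm:HG-1}. Since the outer ribbon decomposition into hooks and horizontal/vertical strips used in Corollary~\ref{cor:Giambelli-main} is defined purely combinatorially from $\lambda/\mu$ and does not depend on the variables, applying the $g$-specialization commutes with forming the determinant.

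Concretely, I first fix the column flags $\mathbf{a} = (-i+2)_i$, $\mathbf{b} = (\lambda'_i + m -1)_i$ from Proposition~\ref{prop:g-enum-ribbon}; these are column flags for the usual shape $\lambda/\mu$, so Corollary~\ref{cor:Giambelli-main} applies and yields
\[
\mathsf{S}^{\mathbf{a},\mathbf{b}}_{\lambda/\mu}(\mathbf{y}/\mathbf{z}) = (-1)^{\ell}\det\begin{bmatrix}
\bigl[\mathsf{S}^{\mathbf{a}^{ij},\mathbf{b}^{ij}}_{u_j\mid v_i}\bigr] & \bigl[\mathsf{S}^{\mathbf{p}^{ij},\mathbf{q}^{ij}}_{1^{v_i-d_j}}\bigr] \\[0.4em]
\bigl[\mathsf{S}^{\tilde{\mathbf{p}}^{ij},\tilde{\mathbf{q}}^{ij}}_{u_j-c_i}\bigr] & \mathbf{0}
\end{bmatrix}.
\]
Next, I apply the $g$-specialization to both sides. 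By Proposition~\ref{prop:g-enum-ribbon}, the left-hand side becomes exactly $g_{\lambda/\mu}(\mathbf{x}_m;\boldsymbol{\alpha},\boldsymbol{\beta})$. For the right-hand side, the $g$-specialization acts entrywise on the block matrix, and by the definition of the flagged dual Grothendieck enumerator each entry $\mathsf{S}^{\cdot,\cdot}_{\cdot}(\mathbf{y}/\mathbf{z})|_g$ transforms into the corresponding $\mathsf{g}^{\cdot,\cdot}_{\cdot}(\mathbf{x}_m;\boldsymbol{\alpha},\boldsymbol{\beta})$. The induced ribbon flags $\mathbf{a}^{ij},\mathbf{b}^{ij},\mathbf{p}^{ij},\mathbf{q}^{ij},\tilde{\mathbf{p}}^{ij},\tilde{\mathbf{q}}^{ij}$ from Corollary~\ref{cor:Giambelli-main} are defined structurally from the decomposition and the original flags $\mathbf{a},\mathbf{b}$, so they remain unchanged by the specialization. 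The sign $(-1)^{\ell}$ in front of the determinant is also unaffected.

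The main (and essentially only) thing to check is that no subtlety arises when specializing --- for instance, that no entry becomes undefined or that the ordering of rows/columns used to produce the block form in the proof of Corollary~\ref{cor:Giambelli-main} survives the specialization. Both are immediate: the undefined entries $\mathsf{S}^{\cdot,\cdot}_{undefined}=0$ correspond to $\mathsf{g}^{\cdot,\cdot}_{undefined}=0$ (the zero block above), and the row/column permutations that produced the sign $(-1)^{\ell}$ depend only on the shape data. Thus the proof is a short consequence of Corollary~\ref{cor:Giambelli-main} and Proposition~\ref{prop:g-enum-ribbon}, and there is no serious technical obstacle beyond verifying these straightforward compatibilities.
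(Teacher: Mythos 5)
Your proof is correct and matches the paper's (implicit) approach: the corollary is indeed an immediate consequence of applying the $g$-specialization from Proposition~\ref{prop:g-enum-ribbon} entrywise to the determinant in Corollary~\ref{cor:Giambelli-main}, since the specialization is a variable substitution and commutes with taking determinants. One small note: the statement in the paper writes $1^{v_i - s_j}$ and $u_j - r_i$, but as you correctly use, these should read $1^{v_i - d_j}$ and $u_j - c_i$ to match the Frobenius data for $\mu$ introduced earlier in the section.
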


\begin{remark}
In the special case of dual stable Grothendieck polynomials $g_{\lambda}(\mathbf{x}_m)$ (for $\boldsymbol{\alpha} = 0$ and $\boldsymbol{\beta} = (1, 1, \ldots)$) of straight shape $\mu =\varnothing$, this formula gives the Giambelli-type formula obtained by Lascoux--Naruse \cite{ln}. 
%There is a known Giambelli formula for dual stable Grothendieck polynomials (for $\boldsymbol{\alpha} = 0$ and $\boldsymbol{\beta} = (1, 1, \ldots)$) of straight shape $\mu =\varnothing$ obtained by Lascoux and Naruse in \cite{ln}. 
Namely, let $\lambda = (u_1, \ldots, u_k \mid v_1, \ldots, v_k)$, then the following determinantal formula holds:
    \begin{align*}
        g_{\lambda}(\mathbf{x}) = \det \left[ g^{(i,j)}_{u_i \mid v_j}(\mathbf{x}) \right]_{1\le i,j \le k}, 
    \end{align*}
    where 
    \begin{align*}
        g^{(i,j)}_{u \mid v}(\mathbf{x}) = \sum_{p = 0}^{u} \sum_{q = 0}^{v} \binom{p + i - 2}{p} \binom{q + j - 2}{q} g_{u-p \mid v-q}(\mathbf{x}) 
        + \sum_{t = 0}^{\min(i,j)} \binom{u + i - t}{u} \binom{v + j - t}{v}.
    \end{align*}
    It can be shown that     
    \begin{align*}
        \mathsf{g}^{\mathbf{a}^{ij}, \mathbf{b}^{ij}}_{u_j \mid v_i}(\mathbf{x}_m; \mathbf{0}, \mathbf{1}) = g^{(j, i)}_{u_j \mid v_i}(\mathbf{x}_m),
    \end{align*}
  where $\mathbf{a}^{ij}, \mathbf{b}^{ij}$ are the induced ribbon flags for $(u_j \mid v_i)$. 
\end{remark}

\section{Concluding remarks and some open questions}\label{sec:concl}
\subsection{Equivalence between $g$-tableaux and marked reverse plane partitions}
    It is known that dual refined canonical stable Grothendieck polynomials $g_{\lambda/\mu}(\mathbf{x}; \boldsymbol{\alpha}, \boldsymbol{\beta})$ have tableaux formula via {\it marked reverse plane partitions}, see \cite{hwang2}. Here we showed another combinatorial formula for these functions using $g$-tableaux (as defined in previous section). It would be interesting to establish a direct bijective argument to show equivalence of two tableaux formulas for dual refined canonical stable Grothendieck polynomials. %The natural question is to establish connection between marked RPPS and $g$-tableaux.
%\end{remark}

\subsection{Equivalent skew shapes}
Hamel--Goulden formulas are useful for studying equivalent skew shapes for Schur functions, i.e. when $s_{\kappa}= s_{\eta}$ for skew shapes $\kappa, \eta$, see  \cite{reiner}. Some equivalent skew shapes for stable Grothendieck polynomials were studied in \cite{alwaise, aan, hmps}. It would be interesting  to apply our Hamel--Goulden-type formulas for such problems on dual stable Grothendieck polynomials. 

%    Let $\lambda$ be some shape, and $\delta$ be a staircase shape, i.e. $\delta = (n, n-1, \ldots, 1)$, for some positive $n$. It is  known that $s_{\delta/\lambda} = s_{\delta/\lambda'}$,   see \cite{reiner}, \cite{alwaise}, \cite{aan}, \cite{hmps}. For the $g_{\lambda/\mu}(\mathbf{x}_m; \boldsymbol{\alpha}, \boldsymbol{\beta})$ this equality does not hold in general. Nevertheless, it seems that, the next equation holds: 
%    $
 %       g_{\delta/\lambda}(\mathbf{x}_m; \boldsymbol{\alpha}_0, \boldsymbol{\beta}_0) = g_{\delta/\lambda'}(\mathbf{x}_m; \boldsymbol{\alpha}_0, \boldsymbol{\beta}_0), 
%    $
%    where $\boldsymbol{\alpha}_0 = (\alpha, \ldots, \alpha)$ and $\boldsymbol{\beta}_0 = (\beta, \ldots, \beta)$. Moreover, it seems plausible that it remains valid for dual Grothendieck enumerators as well:
%    $
 %       \mathsf{g}_{\delta/\lambda}^{\mathbf{a}, \mathbf{b}}(\mathbf{x}_m; \boldsymbol{\alpha}_0, \boldsymbol{\beta}_0) = 
  %      \mathsf{g}_{\delta/\lambda'}^{\mathbf{a}, \mathbf{b}}(\mathbf{x}_m; \boldsymbol{\alpha}_0, \boldsymbol{\beta}_0)
   % $, where $\mathbf{a, b}$ are some column flags. The question is to determine the conditions under which the identity holds.
%\end{remark}

\subsection{Hamel-Goulden formulas for stable Grothendieck polynomials}
It is natural to ask to develop analogous ribbon decomposition formulas for refined canonical stable Grothendieck polynomials $G_{\lambda/\mu}(\mathbf{x}; \boldsymbol{\alpha}, \boldsymbol{\beta})$ which is a family dual to $g_{\lambda/\mu}(\mathbf{x}; \boldsymbol{\alpha}, \boldsymbol{\beta})$ (via the Hall inner product, for straight shapes) studied in \cite{hwang1, hwang2}. 
%The natural problem is to deduce the Hamel-Goulden formulas for $G_{\lambda/\mu}(\mathbf{x}; \boldsymbol{\alpha}, \boldsymbol{\beta})$. The interesting question is to     determine, whether the $G_{\lambda/\mu}(\mathbf{x}; \boldsymbol{\alpha}, \boldsymbol{\beta})$ is a specialization of a more general function, analogously to how $g_{\lambda/\mu}(\mathbf{x}_m; \boldsymbol{\alpha}, \boldsymbol{\beta})$ is a specialization of the flagged supersymmetric Schur function. 
%\end{remark}

\section*{Acknowledgements}
We are grateful to Askar S. Dzhumadil'daev and Alejandro Morales for helpful conversations. 

\newpage


\begin{thebibliography}{abcdefghij}

\bibitem[AAN22]{aan}
F. Abney\textendash McPeek, S. An, J.~S. Ng,
The Stembridge equality for skew stable Grothendieck polynomials and skew dual stable Grothendieck polynomials,
Algebraic Combin. {\bf 5} (2022), 187--208.

\bibitem[ACC+18]{alwaise}
E. Alwaise, S. Chen, A. Clifton, R. Patrias, R. Prasad, M. Shinners, A. Zheng,
Coincidences among skew dual stable Grothendieck polynomials,
Involve, a Journal of Mathematics {\bf 11} (2018), 143--160.

\bibitem[AY22]{ay22}
A. Amanov and D. Yeliussizov, Determinantal formulas for dual Grothendieck polynomials, Proc. Amer. Math. Soc. {\bf 150} (2022), 4113--4128.

\bibitem[CLL02]{cll}
W. Y. C. Chen, B. Li, and J. D. Louck, The flagged double Schur function, J. Algebraic Combin. {\bf 15} (2002), 7--26.

\bibitem[CYY05]{cyy}
W. Y. C. Chen, G.-G. Yan, A. L. B. Yang, Transformations of border strips and Schur function determinants, J. Algebraic Combin. {\bf 21} (2005), 379--394.

\bibitem[GGL16]{ggl}
P. Galashin, D. Grinberg, and G. Liu, Refined dual stable Grothendieck polynomials and generalized Bender-Knuth involutions, Electronic J. Combin. {\bf 23} (2016): 3--14.

\bibitem[GV89]{lgv}
I. Gessel and X. Viennot, Determinants, paths, and plane partitions, preprint (1989).

\bibitem[HMP+23]{hmps}
Z. Hamaker, A. H. Morales, I. Pak, L. Serrano and N. Williams, Bijecting hidden symmetries for skew staircase shapes, Algebraic Combin. {\bf 6} (2023), 1095--1118.

\bibitem[HG95]{hg}
A. M. Hamel and I. P. Goulden, Planar decompositions of tableaux and Schur function determinants, European J. Combin. {\bf 16} (1995), 461--477.

\bibitem[HJK+24]{hwang1}
B. H. Hwang, J. Jang, J. S. Kim, M. Song, and U. K.  Song, Refined canonical stable Grothendieck polynomials and their duals, Part 1,  Adv. Math. {\bf 446} (2024), article 109670.

\bibitem[HJK+25]{hwang2}
B. H. Hwang, J. Jang, J. S. Kim, M. Song, and U. K.  Song,  Refined canonical stable Grothendieck polynomials and their duals, Part 2, European J. Combin. {\bf 127} (2025), article 104166. %arXiv preprint arXiv:2404.02483.

\bibitem[Iwa20]{iwao1}
S. Iwao, Grothendieck polynomials and the boson-fermion correspondence, Algebraic Combin. {\bf 3} (2020),1023--1040.

\bibitem[IMS24]{ims}
S. Iwao, K. Motegi, and T. Scrimshaw, Free fermions and canonical Grothendieck polynomials, Algebraic Combin. {\bf 7} (2024), 245--274.
 
\bibitem[Kim21]{kim1}
J. S. Kim, Jacobi--Trudi formula for refined dual stable Grothendieck polynomials, J. Combin. Theory Ser. A {\bf 180} (2021), article 105415.

\bibitem[Kim22]{kim2}
J. S. Kim, Jacobi--Trudi formulas for flagged refined dual stable Grothendieck polynomials, Algebraic Combin. {\bf 5} (2022), 121--148.

\bibitem[LP07]{lp}
T.~Lam and P.~Pylyavskyy, 
Combinatorial Hopf algebras and K-homology of Grassmannians,
{Int. Math. Res. Not.}\ Vol. 2007, (2007), rnm 125.
    
\bibitem[LN14]{ln}
A. Lascoux and H. Naruse, Finite sum Cauchy identity for dual Grothendieck polynomials, Proc. Japan Acad. Ser. A Math. Sci. {\bf 90} (2014),  87--91.
  
\bibitem[LP88]{lasp}
A. Lascoux and P. Pragacz, Ribbon Schur functions, European J. Combin. {\bf 9} (1988), 561--574.

\bibitem[LS82]{ls}
A. Lascoux and M.-P. Sch\"utzenberger, Polyn\^omes de Schubert, C. R. Acad. Sci. Paris S\'er. I Math. {\bf 294} (1982), 447–-450.

\bibitem[Mac98]{macdonald} %???
I. G. Macdonald, Symmetric functions and Hall-Littlewood polynomials, Oxford Univ. Press, Oxford (1998).
  
\bibitem[Mac92]{macdonald1}
I. G. Macdonald, Schur functions: theme and variations, S\'eminaire Lotharingien de Combinatoire {\bf 28} (1992), B28a-39.
  
\bibitem[MS25]{ms}
K. Motegi and T. Scrimshaw, Refined dual Grothendieck polynomials, integrability, and the Schur measure, Selecta Math. {\bf 31} (2025), 1--70. 

\bibitem[RSV07]{reiner}
V. Reiner, K. M. Shaw, S. Van Willigenburg, Coincidences among skew Schur functions, Adv. Math. {\bf 216} (2007), 118--152.

\bibitem[Wac85]{wachs}
M. L. Wachs, Flagged Schur functions, Schubert polynomials, and symmetrizing operators, J. Combin. Theory Ser. A {\bf 40} (1985), 276--289.
  
\bibitem[Yel17]{yel17}
D. Yeliussizov, Duality and deformations of stable Grothendieck polynomials, J. Algebraic Combin. {\bf 45} (2017), 295--344.

 \bibitem[Yel20]{yel20}
 D. Yeliussizov, Dual Grothendieck polynomials via last-passage percolation, C. R. Math. Acad. Sci. Paris {\bf 358} (2020), 497--503.
 
  
  \bibitem[Yel21a]{yel21a}
  D. Yeliussizov, Random plane partitions and corner distributions, Algebraic Combin. {\bf 4} (2021), 599--617.
  %arXiv:1910.13378, 2019.
  
  \bibitem[Yel21b]{yel21b}
  D. Yeliussizov, Enumeration of plane partitions by descents, J. Combin. Theory Ser. A {\bf 178} (2021), Article 105367.
  %arXiv:1911.03259, 2019.


\end{thebibliography}
\end{document}